\newcommand{\R}{\mbox{$\mathbb{R}$}}
\newcommand{\NN}{\mathbf{N}}
\newtheorem{lemma}{Lemma}[section]
\newtheorem{prop}[lemma]{Proposition}
\newtheorem{thm}[lemma]{Theorem}
\theoremstyle{definition}
\newtheorem{Def}[lemma]{Definition}
\newtheorem{exam}[lemma]{Example}
\newtheorem{asps}[lemma]{Assumptions}
\theoremstyle{remark}
\newtheorem{rem}[lemma]{Remark}
\newcommand{\etal}{{\it et al.}}
\newcommand{\ran}{\textnormal{range}}
\title[]{Towards a classification of steady-state bifurcations for networks with asymmetric inputs}
\author{Manuela Aguiar}
\address{Manuela Aguiar, CMUP, Faculdade de Ci\^encias, Universidade do Porto, Rua do Campo Alegre s/n, 4169-007 Porto, Portugal\newline 
Faculdade de Economia, Universidade do Porto, Rua Dr Roberto Frias, 4200-464 Porto, Portugal}
\email{maguiar@fep.up.pt}
\author{Ana Dias}
\address{Ana Dias, Centro de Matem\'atica da Universidade do Porto, Departamento de Matem\'atica, Faculdade de Ci\^encias, 
Universidade do Porto, Rua do Campo Alegre s/n, 4169-007 Porto, Portugal}
\email{apdias@fc.up.pt}
\author{Pedro Soares}
\address{Pedro Soares, Departamento de Matem\'atica, ISEG-Lisbon School of Economics and Management, Universidade de Lisboa,
 Rua do Quelhas 6, 1200-781 Lisbon, Portugal}
\email[Corresponding author]{psoares@iseg.ulisboa.pt}
\keywords{Coupled cell network, asymmetric inputs, steady-state bifurcation, synchrony space, synchrony subspaces lattice}
\subjclass[2010]{Primary: 34C23; Secondary:  37C10, 34C15, 05C90}
\date{\today}
\begin{document}

\begin{abstract}
We consider homogeneous coupled cell networks with asymmetric inputs. 
We obtain general results concerning codimension-one steady-state bifurcations for networks with any number of cells and any number of asymmetric inputs. 
These results rely solely on the network adjacency matrices eigenvalue structure and the existence, or not, of network synchrony subspaces. 
For networks with three-cells, we describe the possible lattices of synchrony subspaces annotated with the eigenvalues on each synchrony subspace.
Applying the previous results, we classify the synchrony-breaking steady-state bifurcations that can occur for three-cell minimal networks with one, two or six asymmetric inputs.
\end{abstract}

\maketitle

\section{Introduction}

%working environment
In this paper we consider coupled cell systems, that is, continuous dynamical systems whose structure can be schematized through a network.
We follow the formalisms of Stewart, Golubitsky and co-workers~\cite{SGP03, GST05} and Field~\cite{F04}, where a coupled cell network is a directed graph with nodes and edges representing, respectively,   the cells (sets of dynamical systems, here, systems of ordinary differential equations) and  the couplings between the cells.  
One of the key properties of coupled cell systems is the existence of synchrony spaces --  spaces defined in terms of equalities of cell coordinates and that are left invariant under any coupled cell system consistent with a given network.
Remarkably, the existence of such spaces depends solely on the network structure and not on the given admissible vector field.

We concentrate our work on networks with $k$ asymmetric inputs -- there are  $k \in \NN$ different input types and each cell receives exactly one input of each type. 
The corresponding coupled cell systems have all the cells  with the same internal phase space (identical cells) and the cells are all input equivalent as every cell receives  exactly $k$ couplings of the $k$ different types. 
These networks are formally defined by $k$ adjacency matrices, one for each type of coupling. 
Any coupled cell system associated to a network with asymmetric inputs admits the full-synchrony subspace where all cells are synchronized.  
It is also known that the set of synchrony spaces of a given network is a lattice where the bottom is the full-synchrony subspace and the top is the network phase space, see Stewart~\cite{S07}. Moreover,  coupled cell systems restricted to  any  network synchrony subspace correspond to  coupled cell systems consistent with  a smaller network determined by the original network and the synchrony space, ~\cite{SGP03, GST05}.

%problem to be study
Consider a network with $k$ asymmetric inputs.  Take a one-parameter family of coupled cell systems associated with that network and possessing an equilibrium in the full-synchrony subspace. 
One important observation we make is that the Jacobian matrix at that equilibrium is a linear combination of the adjacency matrices of the network, and its eigenvalues are functions of the first order derivatives of the associated coupled cell systems.
Assume that one of these  eigenvalues crosses $0$ as the parameter changes. 
In any neighborhood of the full synchrony equilibrium, new equilibria may appear forming a steady-state bifurcation branch. 
Such bifurcation branch can have less synchrony and, in this case, we say that a synchrony-breaking steady-state bifurcation has occurred.
The synchrony of a bifurcation branch is the smallest synchrony subspace that contains it. 
In this work, we describe the synchrony subspaces that might robustly support a steady-state bifurcation branch for networks with three cells and two asymmetric inputs.
Together with the characterization given in Leite and Golubitsky~\cite{LG06} for the $34$ networks with three cells and two symmetric inputs, it gives a complete characterization of the patterns of synchrony-breaking steady-state bifurcations for networks with three cells and two inputs.

%How do we achieve this?
To obtain a complete synchrony-breaking steady-state bifurcation characterization, even for networks with three cells and two inputs, it is not feasible to study every possible network as the number of possible networks is large.
Moreover, there are networks that have different topologies but the same type of generic dynamics. 
When two different networks support the same space of coupled cell systems, they are said to be ODE-equivalent.
In previous work, Aguiar, Dias and Soares~\cite{ADS20} describe the ODE-classes of networks with three cells and two asymmetric inputs.
This reduces the number of networks to be considered to 48.

As coupled cell systems restricted to  any  network synchrony subspace correspond to  coupled cell systems consistent with  a smaller network, the  lattice of 
the network synchrony spaces can have its elements (synchrony subspaces) annotated by the eigenvalues of the smaller networks determined by  those synchrony 
subspaces.
For networks with three cells, there are seven possible annotated lattices of synchrony spaces.
We show that the seven cases can be studied using three bifurcation results about the synchrony spaces that support a bifurcating branch.  
In this work, we first prove these bifurcation results for networks independently of the number of cells.
The non-degeneracy conditions described in the bifurcation results generically depend solely on the network.
Finally, we apply the results to the 48 ODE-classes of networks with three cells and two asymmetric inputs, according to its annotated lattice and checking the non-degeneracy conditions.
This provides a complete characterization of the 
synchrony spaces that support a 
(codimension-one) steady-state
bifurcating branch for networks with three cells and two asymmetric inputs.

%%%%%%%%
\subsection*{Further motivation}
The analysis of small networks, Milo~\etal~\cite{MSIKCA02}, that are part of complex networks modelling real-world problems can help understand the dynamical properties of those big networks. 
One important perspective in science is precisely to find the small building blocks, usually called {\it motifs}, that are often occurring in biological 
networks, study their dynamics and then understand how cellular function emerges from the interactions between the motifs. 
See Morone, Leifer and Makse~\cite{MLM20} and Leifer~\etal~\cite{LSI21}.
Some examples of applied studies about small networks are given next. 

Jia~\etal~\cite{Jia17} consider two common two node motifs that are often found in many cell-fate decisions during embryonic development.
One motif, the toggle switch (TS), it is comprised of two transcription factors  that mutually inhibit each other; a TS where each transcription factor self-activate is a self-activating toggle switch. 
Their work is focus into answering two questions. 
What are the types of co-existing stable equilibria (phenotypes) and how the coupling between two such TS influences the state-space of each other.
Mangan and Alon~\cite{MA03} consider feed-forward loops (FFLs) structures with three genes (nodes) corresponding to biochemical wiring patterns (network motifs) which recur throughout in transcription networks. 
There are eight possible FFLs to be considered, as each of the three interactions in the FFL can be activating or repressing. 
The authors showed that four of them act as sign-sensitive accelerators, while the other four types act as sign-sensitive delays.
They also remarked that some of these FFLs appear more often probably due to the reduced functionality of the rare ones. 
In \cite{Jolly15} a theoretical study is made considering the coupling between miR-200/ZEB (the decision-making feedback loop for Epithelial-Mesenchymal Transition) and LIN28/let-7 (the decision-making feedback loop for tumour-initiation potential) showing the ability to disseminated primary tumour cells to form metastases at other organs.
In \cite{HLJK23}, Hernandez~\etal\, break the complex network into smaller independent subnetworks to describe the steady-states of complex networks arising from biochemical systems, which often describe their long-term behaviours. 
The steady-state solutions of each subnetworks are then stitch together to lead to the analytic steady-states of the original network. 
Prill~\etal~\cite{PIL05} carry out computational analysis about the robust stability in biological networks to small-scale perturbations in biological entities, motifs with three or four nodes.
By analysing the responses to small perturbations from a steady-state, under different assumptions on the parameters, their results suggest that robust stability of networks motifs is an important determinant of biological network structure.

The dynamics associated with networks of just two or three-cells can already be complex.
See, for example, Pasemann~\cite{P02} for the discrete-time case and Aguiar~\etal~\cite{AADF11} for the continuous-time case.

In this work, we study which synchrony-breaking patterns emerge as a bifurcation occurs in three-cell networks.
Examples of previous works about this topic are Beer~\cite{B95}, Leite and Golubistky~\cite{LG06} and  Golubitsky and Wang~\cite{GW20}.

\subsection*{Details on the main results of the paper}
%Section 3

In Section~\ref{sec:ssbresults}, we give general results about the synchrony-breaking steady-state bifurcations for networks with asymmetric inputs.
More specifically, we show which synchrony spaces support a  bifurcating branch of equilibria  arising from a codimension-one steady-state bifurcation at a full synchronous equilibrium, for generic coupled cell systems.
 The results are organized according to the number of synchrony subspaces intersecting, in a non trivial way, the generalized kernel $K$ of the Jacobian matrix at a full synchronous equilibrium. 

The first result corresponds to the case where  $K$  has dimension one and where it is taken the smallest synchrony subspace containing $K$. 
It is usually called the synchrony bifurcation branch,  and has been proved for different types of networks, such as networks with symmetric inputs, see Soares~\cite{S17} and Golubitsky and Lauterbach~\cite{GL09}.
Using the Lyapunov-Schmidt Reduction Method, see for example Golubitsky and Schaeffer~\cite{GS85}, we show in Theorem~\ref{thm:LSred1} that each such synchrony subspace supports a steady-state bifurcation branch. 

In the second result, we consider the case where $K$ has dimension $m$ and the smallest synchrony subspace $\Delta$ which contains $K$ also includes $2^{m}-1$ synchrony subspaces intersecting $K$  in a one-dimensional space. 
By Theorem~\ref{thm:LSred1} mentioned above, we know that those $2^{m}-1$ synchrony subspaces intersecting $K$ support a bifurcation branch. 
Using B\'ezout's Theorem, see for example Blum~\etal~\cite{BCSS98}, we prove in Theorem~\ref{thm:LSred2} that the synchrony subspace $\Delta$, containing $K$, does not support a bifurcation branch.

The third result considers the case of $K$ being two-dimensional and the kernel of the Jacobian matrix at a full synchronous equilibrium being one-dimensional. 
We prove in Theorem~\ref{thm:LSred3} that the smallest synchrony subspace containing $K$ supports a bifurcation branch.  

%Section 4
In Section~\ref{sec_three_cells}, we study the codimension-one steady-state bifurcations from a full synchronous equilibrium for continuous-time dynamical systems associated with three-cell networks with any number $k$ of asymmetric inputs. 
The networks are grouped 
according to their annotated lattice of synchrony subspaces. 
We show  in  Theorem~\ref{thm:lala} that there are seven possible synchrony lattice structures for connected three-cell networks with any number of asymmetric inputs, see Figure~\ref{Parte1}. 
Under the assumption of some network non-degeneracy conditions, the 
results obtained in Section~\ref{sec:ssbresults} show which synchrony spaces support a steady-state bifurcating branch. 
Noticeably, we derive that there are eight possible bifurcation diagrams, see  Figure~\ref{bd}, where one lattice leads to two distinct bifurcation diagrams and the other six lead to a distinct bifurcation diagram each.

%Section 5
Finally, in Section~\ref{subsec_case_study}, we apply the methodology developed here, and described above, to the minimal networks with three-cells and one, two and six asymmetric inputs enumerated in ~\cite{ADS20} which cover every possible dynamics with those numbers of cells and asymmetric inputs. 
First,  the network eigenvalues are obtained in Theorem~\ref{thm:classification2} and the annotated lattices of each network in Theorem~\ref{thm:classificationlat}. 
An observation is that only six of the seven possible lattices obtained in Theorem~\ref{thm:lala} occur for the 48 networks under study.
Using the results of Section~\ref{sec:ssbresults}, and checking the network non-degeneracy conditions, we prove in Theorem~\ref{thm:finalbif} which synchrony subspaces support a steady-state bifurcation branch from a full synchronous equilibrium,  obtaining then the corresponding bifurcation diagrams, see Table~\ref{tab:latdia}.
It should be noted that, this characterization is still valid for synchrony subspaces with dimension at most three of a network with one, two or six asymmetric inputs for any number of cells.

%Text structure
\subsection*{Organization of the paper}

The paper is organized in the following way. 
In Section~\ref{sec_back} we recall the main definitions and results concerning coupled cell networks, coupled cell systems, synchrony spaces and codimension-one steady-state bifurcations of coupled cell networks. 
In Section~\ref{sec:ssbresults} we present general results concerning the codimension-one steady-state bifurcations for coupled cell networks with asymmetric inputs. The main results are Theorems~\ref{thm:LSred1}, \ref{thm:LSred2} and \ref{thm:LSred3} proving which  synchrony subspaces robustly support a steady-state bifurcation branch.
In Section~\ref{sec_three_cells}, we consider networks with three-cells and any number of asymmetric inputs, obtaining the network eigenvalues, the lattices of synchrony subspaces and applying the bifurcation results of  Section~\ref{sec:ssbresults}. 
Finally, in Section~\ref{subsec_case_study}, we apply the previous methodology to the minimal networks with three-cells and one, two and six asymmetric inputs. The main result is Theorem~\ref{thm:finalbif} listing the synchrony subspaces supporting a steady-state bifurcation branch.

\section{Steady-state bifurcations for coupled cell networks}\label{sec_back}

We follow the formalisms of Stewart, Golubitsky and Pivato~\cite{SGP03}, Golubitsky, Stewart and T\"{o}r\"{o}k~\cite{GST05} and Field~\cite{F04} on coupled cell networks and the associated coupled cell systems. 

\subsection{Coupled cell networks}

In this paper, we consider $n$-cell {\it coupled cell networks with $k$ asymmetric inputs} which can be represented by directed graphs, where the cells are placed at vertices (nodes) and the couplings are depicted by directed arrows. 
Any cell receives $k$ inputs, one from each type. 
The description of  any such network can be given by $k$ {\it adjacency matrices}, of order $n$, if $n$ is the network number of cells and the rows and columns are indexed by the network cells. 
If $A_1, \ldots, A_k$ are the adjacency matrices, the entry $ij$ of the matrix $A_l$ is $1$ if there is a directed edge from cell $j$ to cell $i$ of type $l$, or $0$ otherwise, where $l=1, \ldots, k$ and $i,j=1,2,\dots,n$. 
Thus, each row of $A_l$ has exactly one entry equal to $1$ and $0$ elsewhere.

We recall that  a network is {\it connected} if there is an undirected path between any two cells.
All networks considered here are connected.

\begin{exam} \label{exam:asymmetric_networks}
In Figure~\ref{fig_1and2} we have two three-cell and one  two-cell coupled networks with asymmetric inputs. 
As an example, the network in the middle has two asymmetric inputs which can be described by the following  two adjacency matrices 
$$
A_1 = 
\left(
\begin{array}{ccc}
0 & 1 & 0 \\
1 & 0 & 0 \\
1 & 0 & 0
\end{array}
\right), \quad 
A_2 = 
\left(
\begin{array}{ccc}
0 & 1 & 0 \\
0 & 0 & 1 \\
0 & 1 & 0
\end{array}
\right)\, .
$$
\hfill $\Diamond$
\end{exam}
 
 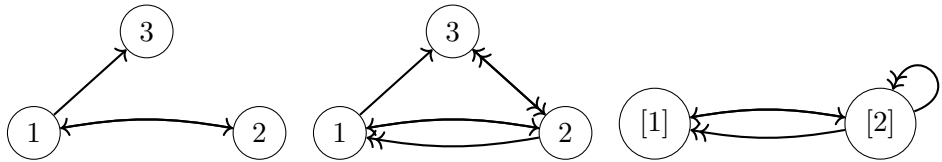
\begin{figure}[h]
 \begin{tabular}{lll}
 %F
\begin{tikzpicture}
 [scale=.15,auto=left, node distance=1.5cm, every node/.style={circle,draw}]
 \node[fill=white] (n1) at (4,0) {\small{1}};
  \node[fill=white] (n2) at (24,0) {\small{2}};
 \node[fill=white] (n3) at (14,9)  {\small{3}};
 \draw[->, thick] (n1) edge  [bend right=-10] (n2); 
 \draw[->, thick] (n2) edge  [bend left=-10] (n1); 
\draw[->, thick] (n1) edge (n3); 
\end{tikzpicture}  &
%$F_1 \& F_3$ 
\begin{tikzpicture}
  [scale=.15,auto=left, node distance=1.5cm, every node/.style={circle,draw}]
 \node[fill=white] (n1) at (4,0) {\small{1}};
 \node[fill=white] (n2) at (24,0) {\small{2}}; \node[fill=white] (n3) at (14,9)  {\small{3}};
\draw[->, thick] (n1) edge  [bend right=-10] (n2); 
\draw[->, thick] (n2) edge  [bend left=-10] (n1); 
\draw[->, thick] (n1) edge (n3); 
\draw[->>, thick] (n2) edge  [bend left=10] (n1); 
\draw[->>, thick] (n3) edge   (n2); 
\draw[->>, thick] (n2) edge  (n3); 
\end{tikzpicture} &
%Quotient
\begin{tikzpicture}
  [scale=.15,auto=left, node distance=1.5cm, every node/.style={circle,draw}]
 \node[fill=white] (n1) at (4,0) {\small{$[1]$}};
 \node[fill=white] (n2) at (24,0) {\small{$[2]$}}; 
\draw[->, thick] (n1) edge  [bend right=-10] (n2); 
\draw[->, thick] (n2) edge  [bend left=-10] (n1); 
\draw[->>, thick] (n2) edge  [bend left=10] (n1); 
\draw[->>, thick] (n2) to  [in=70,out=20,looseness=5] (n2);
\end{tikzpicture}
\end{tabular} 
\caption{Examples of coupled cell networks with asymmetric inputs. }
\label{fig_1and2}
 \end{figure}

\subsection{Coupled cell systems}

Let $\mathcal{N}$ be an $n$-cell network with $k$ asymmetric inputs, say of types $1, \ldots, k$. We take a cell to be a system of ordinary differential equations and we consider the class of coupled cell systems consistent with the structure of the network $\mathcal{N}$. 
As every cell of the network receives $k$ inputs, one from each type, we say that  the network is {\it homogeneous}. 
It follows that we consider that 
all the cells are {\it identical}, that is, they have the same phase space $V$ 
assumed here to be
$V = \R$.  
Moreover, the dynamics of each cell $i$ is governed by the same smooth function $f:\, V^{k+1} \to V$, evaluated at cell $i$ and at the $k$  cells belonging to its input set. Thus, for $i=1, \ldots, n$, the evolution of cell $i$ is given by the set of ordinary differential equations 
\begin{equation} \label{eq:CCS1}
\dot{x}_i = f\left(x_i; \, x_{i_1}; \ldots;  x_{i_k}\right), 
\end{equation}
where $i_j$ is the start cell of the edge with type $j$ and heading to cell $i$.  The coupled cell systems with cells governed by equations of the form (\ref{eq:CCS1}) are $\mathcal{N}$-{\it admissible} and  $f^{\mathcal{N}}$ denotes the coupled cell system defined by $f$.

\begin{exam} Consider the network in the middle  of Figure~\ref{fig_1and2} and any smooth function $f:\, \R^3 \to  \R$. Coupled cell systems associated with this network are of the form 
\begin{equation}\label{eq_ex1}
\left\{ 
\begin{array}{l}
\dot{x}_1 = f(x_1; x_2;x_2)\\
\dot{x}_2 = f(x_2; x_1;x_3)\\
\dot{x}_3 = f(x_3; x_1;x_2)
\end{array} \, .
\right. 
\end{equation} 
\hfill  $\Diamond$
\end{exam}

Given two networks, the set of coupled cell systems associated to them can coincide, up to renumbering of the cells. 
In this case, we say that those networks are {\it ODE-equivalent}. 
This defines an equivalence relation between networks whose equivalence classes are called {\it ODE-classes}.
Inside each ODE-class, the networks having a minimal number of inputs are called {\it minimal networks}.
See Dias and Stewart~\cite{DS05} and Aguiar and Dias~\cite{AD07} for details. 

\subsection{Synchrony subspaces}

One of the most remarkable and first observed property of coupled cell systems, which only depends on the network structure and not on the particular coupled cell system, is the existence of flow-invariant subspaces.
Those subspaces are called {\it synchrony subspaces} and are defined by some equalities of  cell's coordinates, $x_i=x_j$. More precisely, one such {\it polydiagonal subspace} $\Delta$ is said to be a network synchrony subspace if it is invariant for any admissible vector field of that network.  
By \cite[Theorem 6.5]{SGP03}, a polydiagonal space is a network synchrony subspace if and only if it is left invariant by every network's adjacency matrix,

\begin{exam} Consider the network ${\mathcal{G}}$ in the middle of Figure~\ref{fig_1and2} and  the associated coupled cell systems in  (\ref{eq_ex1}). The polydiagonal  space $\Delta = \{ (x_1,x_2,x_3):\, x_2 = x_3\}$ is flow-invariant for any such coupled cell system. In fact, looking at the form of the coupled systems in (\ref{eq_ex1}), we see that, given an initial condition where the values for $x_2$ and $x_3$ coincide, then the equations for the two variables are the same and thus they stay synchronized for forward time. Moreover, this property does not depend on the particular choice of the function $f$ nor the choice of the cell phase spaces. The space $\Delta$ is a synchrony space for the network ${\mathcal{G}}$. 
Note that $\Delta$ is left invariant under the network adjacency matrices, $A_1$ and $A_2$ in Example~\ref{exam:asymmetric_networks}. 
\hfill $\Diamond$
\end{exam}

The restriction of a coupled cell system associated with a network $\mathcal{N}$  to a synchrony subspace $\Delta$ corresponds to a coupled cell system consistent with a smaller network ${\mathcal{Q}}$, the {\it quotient network} of $\mathcal{N}$ determined by $\Delta$, see~\cite[Theorem 5.2]{GST05}. 
We can define an equivalence relation $\bowtie$ on the network set of cells in the following way: if $x_i = x_j$ is one of the equalities defining $\Delta$ then $i \bowtie j$. We write $\Delta = \Delta_{\bowtie}$. 
The cells of ${\mathcal{Q}}$ are so the $\bowtie$-equivalence classes and, for each edge-type $l$, an edge $[j] \to^l [i]$ corresponds to the edges $j \to^l i$ in $\mathcal{N}$ where $j\in[j]$ and $i\in[i]$. 
Thus, edge types are preserved and both networks have the same number of asymmetric inputs.
That is, if $\mathcal{N}$ has $k$ asymmetric inputs, the quotient network ${\mathcal{Q}}$ has also $k$ asymmetric inputs.

  \begin{exam} Consider the network ${\mathcal{G}}$ in the middle of  Figure~\ref{fig_1and2} and recall that the space $\Delta = \{ (x_1,x_2,x_3):\, x_2 = x_3\}$ is a synchrony space for that network. Note that $\Delta = \Delta_{\bowtie}$, where $\bowtie = \left\{ [1] =\{1\},\, [2] = \{ 2,3\}\right\}$. The quotient network ${\mathcal{Q}}$ of ${\mathcal{G}}$ determined by $\Delta$ is the network on the right of Figure~\ref{fig_1and2}. 
\hfill $\Diamond$
\end{exam}

Stewart~\cite{S07} showed that the set of synchrony subspaces of a network together with the partial order of inclusion is a finite lattice.
Thus the {\it join} and {\it meet} of any two synchrony subspaces $\Delta_1$ and $\Delta_2$ are well-defined in the set of synchrony subspaces.
The join of $\Delta_1$ and $\Delta_2$ is the smallest synchrony subspace containing $\Delta_1$ and $\Delta_2$.
Analogously, the meet of $\Delta_1$ and $\Delta_2$ is the largest synchrony subspace contained in $\Delta_1$ and $\Delta_2$ (the intersection of the two synchrony subspaces).

For the networks with asymmetric inputs considered here, 
the {\it bottom} of the lattice, the smallest synchrony subspace or the meet of all synchrony subspaces, is the {\it full-synchronized subspace} $\Delta_0$.
The {\it top} of the lattice, the biggest synchrony subspace, or the join of all synchrony subspaces, is the {\it network phase space} $\R^n$, if $n$ is the number of cells in the network.

\begin{exam} The lattice of the synchrony spaces of the network ${\mathcal{G}}$ in the middle of  Figure~\ref{fig_1and2} is formed by the full-synchronized subspace $\Delta_0 = \{ (x_1,x_2,x_3):\, x_1 = x_2 = x_3\}$, $\Delta = \{ (x_1,x_2,x_3):\, x_2 = x_3\}$ and the network phase space $\R^3$. 
\hfill $\Diamond$
\end{exam}

\subsection{Steady-state bifurcations for networks}\label{sec:ssbngen}

Let $\mathcal{N}$ be a network with $k$ asymmetric inputs. 
We address in this paper the possible codimension-one local steady-state bifurcations  of coupled cell systems from a full synchrony equilibrium solution $x_0$ and their spontaneous synchrony breaking. 
Let $f:\, \mathbb{R}\times\mathbb{R}^{k}\times\mathbb{R}\rightarrow \mathbb{R}$ be a family of smooth functions defining a family of coupled cell systems $f^{\mathcal{N}}$ on $\mathcal{N}$ parametrized by a real parameter $\lambda$: 
\begin{equation} \label{eq1}
\dot{x}=f^{\mathcal{N}}(x,\lambda)\, .
\end{equation}
We assume, without loss of generality, that the solution $x_0$ and the bifurcation point $\lambda_0$ are the origin. 
In the following, we will assume that the origin is an equilibrium point for every $\lambda$, 
$$f(0,\lambda) \equiv 0\, .$$

We consider that the family of coupled cell systems $f^{\mathcal{N}}$ has a {\it local steady-state bifurcation} at $(x_0,\lambda_0) = (0,0)$ if the number
 of steady-state solutions of (\ref{eq1}) in any neighbourhood of $x_0$ changes when the parameter $\lambda$ crosses $\lambda_0 =0$. Recall that a necessary condition for the existence of local steady-state bifurcations at $(0,0)$ is that the Jacobian matrix of $f^{\mathcal{N}}(x,0)$ at the origin has a zero eigenvalue. Throughout, we denote by $J^{\mathcal{N}}_f$ the Jacobian matrix of $f^{\mathcal{N}}(x,0)$ at the origin. 

Using the $k$ network adjacency matrices, say $A_1,\dots,A_k$, we have that $J^{\mathcal{N}}_f$ has the following form: 
\begin{equation} \label{eq_Jac}
J_f^{\mathcal{N}}=D_xf^{\mathcal{N}}(0,0)= f_0 \mbox{Id}_n + \sum_{j=1}^k f_j A_j,
\end{equation}
where $f_j$, for $j=0, \ldots, k$, denotes the derivative of $f$ at the origin with respect to the variable $j+1$.  
Since the Jacobian matrix depends linearly on the first derivatives of $f$, there are continuous functions 
$$\mu_1,\dots,\mu_s:\mathbb{R}^{k+1}\rightarrow \mathbb{C}$$
such that $\mu_1(f_0,f_1,\dots,f_k),\dots, \mu_s(f_0,f_1,\dots,f_k)$ are the distinct eigenvalues of $J_f^{\mathcal{N}}$. 
We call these functions the {\it network eigenvalues}. Moreover, as every matrix $A_j$ has row sum one, we have that one of the network eigenvalues is  
$$
\upsilon = \sum_{j=0}^k f_j, 
$$
the {\it network valency eigenvalue}.

A {\it steady-state bifurcation condition} (at $\lambda=0$) for the family $f^{\mathcal{N}}$ is given by the equality of one of those network eigenvalues of $J_f^{\mathcal{N}}$ to zero,
$$\mu_i(f_0,\dots,f_k)=0.$$ 

\begin{exam}
Let $\mathcal{N}$ be the network with three-cells and one asymmetric input forming a $3$-cycle. 
Then the network eigenvalues of $\mathcal{N}$ are
$$\mu_1(f_0,f_1)=f_0+f_1, \quad \mu_2(f_0,f_1)=f_0+f_1e^{\imath \pi/3}, \quad \mu_3(f_0,f_1)=f_0+f_1e^{\imath 2\pi/3}.$$
Note that $\mu_2=0$ implies that $f_0=f_1=0$ and thus $\mu_1=\mu_3=0$.
\hfill $\Diamond$ 
\end{exam}

Since we are interested in steady-state bifurcations, we consider network eigenvalues which are real on some open set of $\mathbb{R}^{k+1}$, i.e., the interior of $\mu_i^{-1}(\mathbb{R})$ is nonempty.  
 
Next, we define the space of functions with a bifurcation condition given by a network eigenvalue.

\begin{Def}
Let $\mathcal{N}$ be a network with $k$ asymmetric inputs and $\mu$ a network eigenvalue. 
The space of one-parameter families of coupled cell systems with a bifurcation condition given by $\mu$ is denoted by $\mathcal{V}_{\mu}({\mathcal{N}})$, where 
\begin{equation}\label{eq2}
\hspace{\textwidth minus \textwidth}\mathcal{V}_{\mu}({\mathcal{N}}):=\{f:\mu(f_0,\dots,f_k)=0, (f_0,\dots,f_k)\in     \textrm{int}(\mu^{-1}(\mathbb{R})), f(0,\lambda) \equiv 0\}\, .
\hspace{\textwidth minus \textwidth} \Diamond
\end{equation}
\end{Def}

\begin{rem}
Given a network $\mathcal{N}$ with a synchrony space $\Delta$ and ${\mathcal{Q}}$ the quotient network  of $\mathcal{N}$ determined by $\Delta$,
 as the restriction of a coupled cell system associated with $\mathcal{N}$ to $\Delta$ corresponds to a coupled cell system consistent with ${\mathcal{Q}}$, we have that any eigenvalue of ${\mathcal{Q}}$ is also an eigenvalue of $\mathcal{N}$.  
Moreover, the spaces of functions with a bifurcation condition given by an eigenvalue $\mu$ of ${\mathcal{Q}}$ is equal for the networks $\mathcal{N}$ and ${\mathcal{Q}}$:
\[\hspace{\textwidth minus \textwidth}\mathcal{V}_{\mu}({\mathcal{N}})=\mathcal{V}_{\mu}({\mathcal{Q}})\, .\hspace{\textwidth minus \textwidth} \Diamond\]
\end{rem}

\begin{Def}
An eigenvalue of  a network $\mathcal{N}$ which is also an eigenvalue of the quotient network ${\mathcal{Q}}$  of $\mathcal{N}$ determined by a synchrony space $\Delta$  is called 
{\it an eigenvalue of $\Delta$}. 
\hfill $\Diamond$
\end{Def}

Following usual conventions in bifurcation theory, we consider generic functions satisfying a bifurcation condition.
A function is {\it generic} if it satisfies a finite number of non-degenerated conditions.
A non-degenerated condition is a non-trivial inequality on the derivatives of the function $f$ at the origin which  does not contradict the bifurcation condition. 
Trivially, from (\ref{eq_Jac}), we see that a bifurcation condition $\mu(f_0,\dots,f_k)=0$ determines the value of $f_0$ based on the other first derivatives of $f$.
Thus, we consider non-degenerated conditions given by inequalities in the derivatives of the function $f$ not including  the derivative $f_0$.

\begin{Def}
Given a bifurcation problem determined by $f \in  \mathcal{V}_{\mu}({\mathcal{N}})$.\\
(i) A function $b:R\rightarrow \mathbb{R}^n$ is a {\it bifurcation branch} of $f^{\mathcal{N}}$  if $b\not\equiv0$ and
$$f^{\mathcal{N}}(b(\lambda),\lambda)=0,$$
where $\lambda\in R$ and $R$ is a connected region of $\mathbb{R}$ containing the origin.\\
(ii) A bifurcation branch  $b$ of $f^{\mathcal{N}}$ has {\it synchrony} $\Delta_b$ if $\Delta_b$ is the smallest synchrony subspace which contains $b(R)$. 
\hfill $\Diamond$
\end{Def}

\begin{Def}\label{def:Delta_supports}
A synchrony subspace $\Delta$ {\it supports a  steady-state synchrony-breaking bifurcation}  if there is a generic bifurcation problem $\mathcal{V}_{\mu}({\mathcal{N}})$ where some bifurcation branch has synchrony $\Delta$, for a generic family of functions $f \in \mathcal{V}_{\mu}({\mathcal{N}})$.
\hfill $\Diamond$
\end{Def}

We use the following terminology:

\begin{Def}\label{types_eigen}
Let $\mathcal{N}$ be a network with $k$ asymmetric inputs and $\mu:\mathbb{R}^{k+1}\rightarrow \mathbb{C}$ an eigenvalue of $\mathcal{N}$. We say that $\mu$ is: \\
(i) {\it semisimple with multiplicity} $m$, if $\mu(f_0,\dots,f_k)$ is a semisimple eigenvalue of $J^{\mathcal{N}}_f$ with multiplicity $m$, for $f$ generic. \\
(ii) {\it simple} if  it is semisimple with multiplicity $1$.\\
(iii) {\it defective with multiplicity} $(m,n)$ if $m<n$ and $\mu(f_0,\dots,f_k)$ is an eigenvalue of $J^{\mathcal{N}}_f$ with geometric multiplicity $m$ and algebraic multiplicity $n$, for $f$ generic.
\hfill $\Diamond$ 
\end{Def}
The next definition relates eigenspaces and synchrony subspaces. 

\begin{Def}\label{types_syn}
Let $\mathcal{N}$ be a network with $k$ asymmetric inputs, $\Delta$ a synchrony subspace of $\mathcal{N}$ and 
$\mu:\mathbb{R}^{k+1}\rightarrow \mathbb{C}$ a network eigenvalue of $\Delta$. We say that $\Delta$ is: \\
(i) $\mu$-{\it maximal} when $\mu$ is not an eigenvalue of $\overline{\Delta}$ for any synchrony subspace $\overline{\Delta}\subsetneq \Delta$.\\
(ii) $\mu$-{\it submaximal of order} $j\geq 1$, if there are $j$ simple and maximal synchrony subspaces $\Delta_1,\dots,\Delta_j \subsetneq \Delta$ and, for any generic $f\in\mathcal{V}_{\mu}({\mathcal{N}})$ and $\overline{\Delta}\subsetneq \Delta$, 
\[\hspace{4cm}\overline{\Delta}\cap \bigcup_k \ker({J^{\mathcal{N}}_f}^k)\subsetneq \Delta\cap \bigcup_k \ker({J^{\mathcal{N}}_f}^k). \hspace{4cm} \Diamond\]
\end{Def}

In the first case of the above definition, the synchrony subspace is one of the lowest in the lattice of synchrony subspaces to have that network eigenvalue.
In the second case, the synchrony subspace is not one of the lowest in the lattice, but the eigenspace increases as we move up in the lattice at such synchrony subspace.    

\begin{exam}
Consider the networks ${\mathcal{N}}_1, {\mathcal{N}}_2, {\mathcal{N}}_3$ in the left, middle and right, respectively, in Figure~\ref{fig_1and2} and take the adjacency matrices 
$$
A_1 = \left( 
\begin{array}{lll}
0 & 1 & 0\\
1 & 0 & 0\\
1 & 0 & 0
\end{array}
\right), \quad 
A_2 = 
\left( 
\begin{array}{lll}
0 & 1 & 0\\
0 & 0 & 1\\
0 & 1 & 0
\end{array}
\right), \quad
A_3 = 
\left( 
\begin{array}{ll}
0 & 1 \\
1 & 0 
\end{array}
\right), \quad 
A_4 = 
\left( 
\begin{array}{ll}
0 & 1 \\
0 & 1 
\end{array}
\right)
\, .
$$
Note that $A_1$ is the adjacency matrix of ${\mathcal{N}}_1$ and $A_1,A_2$ ($A_3$, $A_4$) are the adjacency matrices of the network with two  
asymmetric inputs ${\mathcal{N}}_2$ (${\mathcal{N}}_3$, respectively). 
The eigenvalues of ${\mathcal{N}}_1$ (${\mathcal{N}}_2$) are $\upsilon = f_0 + f_1,\, f_0,\, f_0-f_1$ ($\upsilon = f_0 + f_1 + f_2,\, f_0 - f_1,\, f_0 - f_2$, respectively). 
The network ${\mathcal{N}}_3$  is a quotient network of ${\mathcal{N}}_2$. 
Its eigenvalues are
 $\upsilon = f_0 + f_1 + f_2,\, f_0 - f_1$. All the eigenvalues are simple. 
Note that $\mu (f_0,f_1,f_2) = f_0-f_1$ is an eigenvalue of ${\mathcal{N}}_2$ and ${\mathcal{N}}_3$. 
The synchrony space $\{x:\, x_2 = x_3\}$ of ${\mathcal{N}}_2$ is $\mu$-maximal since $\mu$ is not an eigenvalue in the full-synchrony subspace.
\hfill $\Diamond$ 
\end{exam}

\section{Steady-state bifurcations for networks with asymmetric inputs: general results}\label{sec:ssbresults}

Let $\mathcal{N}$ be a network with $k$ asymmetric inputs and consider an one-parameter family of coupled cell systems for $\mathcal{N}$ as in (\ref{eq1}). 
Let $\mu$ be an eigenvalue of $\mathcal{N}$ and $f \in \mathcal{V}_{\mu}({\mathcal{N}})$. 
We address three general cases concerning  the relation between  the network synchrony spaces and the eigenspace associated with the eingenvalue $\mu$, namely,  when the synchrony subspace is  
(i) $\mu$-simple and maximal, (ii) $\mu$-semisimple with multiplicity $m$ and it is submaximal of order $2^m-1$, and (iii) $\mu$-defective with multiplicity (2,1) and submaximal of order $1$. We point out that these three cases cover the study of minimal networks with three-cells and two asymmetric inputs presented in the next two sections.

\subsection{Simple eigenvalue and maximal synchrony space}

We start by addressing the simplest case where $\mu$ is simple and the synchrony space $\Delta$ is maximal, which is a common case, specially when the synchrony subspace $\Delta$ has a low dimension. Similar cases have been studied in \cite{SG11, S17} for regular networks and our approach is similar. The idea is to  apply Lyapunov-Schmidt Reduction \cite{GS85}, reducing the steady-state bifurcation problem to a one-dimensional steady-state bifurcation problem, since the eigenvalue $\mu$ is simple. 
Moreover, the reduced problem is finitely determined and we can find at least one bifurcation branch. Since the synchrony subspace $\Delta$ is maximal, the bifurcation branch must have that synchrony. Next, we state this result and sketch its proof for completeness. We note that the
details can be filled using well-known techniques of bifurcation theory which can be found in many references including the references mention above.

\begin{thm}\label{thm:LSred1}
Let $\mathcal{N}$ be a network with $k$ asymmetric inputs and $\mu$ a network eigenvalue. Assume that $f \in \mathcal{V}_{\mu}({\mathcal{N}})$ is generic,  $\mu$ is simple and that $\Delta$ is a $\mu$-maximal synchrony subspace of $\mathcal{N}$.  Then, besides the trivial branch,  there exists a bifurcation branch of $f^{\mathcal{N}}$ with the synchrony  $\Delta$.
\end{thm}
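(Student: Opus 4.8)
The plan is to apply the Lyapunov–Schmidt reduction to the bifurcation equation $f^N(x,\lambda)=0$ at the origin, using that $\mu$ is simple to obtain a scalar reduced equation, and then to read off the synchrony of the resulting branch from the $\mu$-maximality of $\Delta$. Concretely, since $f\in\mathcal{V}_{\mu}(N)$, the Jacobian $J^N_f$ has a zero eigenvalue corresponding to $\mu$, and because $\mu$ is simple this eigenvalue is algebraically and geometrically of multiplicity one. First I would fix a (right) eigenvector $v$ spanning $\ker J^N_f$ and a left eigenvector $w$ spanning $\ker (J^N_f)^{T}$, normalised so that $\langle w,v\rangle = 1$; write $\mathbb{R}^n = \mathbb{R}v \oplus \mathrm{range}(J^N_f)$, with $E$ the spectral projection onto $\mathbb{R}v$ along the range. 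Splitting $x = sv + y$ with $s\in\mathbb{R}$, $y\in\mathrm{range}(J^N_f)$, the implicit function theorem solves the complementary equations $(\mathrm{Id}-E)f^N(sv+y,\lambda)=0$ for $y = Y(s,\lambda)$ with $Y(0,0)=0$, $Y$ smooth, and the reduced equation becomes $g(s,\lambda):=\langle w, f^N(sv + Y(s,\lambda),\lambda)\rangle = 0$.

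Next I would verify the standard non-degeneracy bookkeeping. Since $f(0,\lambda)\equiv 0$ we get $Y(0,\lambda)\equiv 0$ and hence $g(0,\lambda)\equiv 0$, so $g(s,\lambda) = s\,h(s,\lambda)$ for a smooth $h$ with $h(0,0)=0$; the branch off the trivial solution is governed by $h=0$. The partial derivative $h_\lambda(0,0)$ is, up to the normalisation, the eigenvalue-crossing speed $\frac{d}{d\lambda}\mu\big(f_0(\lambda),\dots,f_k(\lambda)\big)\big|_{\lambda=0}$, which is a non-degeneracy inequality in the $\lambda$-derivatives of the $f_j$ (not involving $f_0$ at the bifurcation value in a contradictory way), hence holds generically; similarly $h_s(0,0)$ (or, if that vanishes, $h_{ss}(0,0)$) is a non-degeneracy inequality in the higher-order Taylor coefficients of $f$, again compatible with the bifurcation condition and thus generic. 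With $h_\lambda(0,0)\neq 0$ the implicit function theorem gives $\lambda = \Lambda(s)$ with $\Lambda(0)=0$, producing a genuine bifurcation branch $b(s) = s v + Y(s,\Lambda(s))$; alternatively a transcritical or pitchfork picture follows from the signs of the relevant derivatives. In all cases one obtains a non-trivial branch $b$ of steady states.

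Finally I would identify the synchrony $\Delta_b$ of this branch. By Remark~\ref{rmk:eigquo} the eigenfunction $\mu$, viewed as an eigenvalue of the quotient network $Q = N/\!\bowtie$ associated to $\Delta$, appears as an eigenvalue of $\left.J^N_f\right|_{\Delta}$, and the corresponding eigenvector lies inside $\Delta$; since $\mu$ is simple in $N$, this eigenvector must coincide (up to scalar) with $v$, so $v\in\Delta$. Running the entire Lyapunov–Schmidt reduction inside the flow-invariant subspace $\Delta$ — equivalently, running it for the admissible system $f^Q$ of the quotient, which by the Remark after Definition on $\mathcal{V}_{\mu}$ has the same bifurcation space $\mathcal{V}_{\mu}(N)=\mathcal{V}_{\mu}(Q)$ — yields the same scalar reduced equation, so the branch $b$ constructed above can be taken with $b(D)\subset\Delta$; hence $\Delta_b\subseteq\Delta$. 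Conversely, $\Delta_b$ is a synchrony subspace on whose quotient $\mu$ is still an eigenfunction (it is the eigenfunction carrying the branch), so $\Delta_b$ cannot be strictly smaller than $\Delta$: that would contradict $\mu$-maximality of $\Delta$, which by Definition~\ref{types_syn}(i) says $\mu$ is not an eigenfunction of any proper quotient of $Q$. Therefore $\Delta_b = \Delta$, as claimed.

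The step I expect to be the main obstacle is the last one: cleanly arguing that the branch can be chosen \emph{exactly} with synchrony $\Delta$ rather than merely contained in it. The inclusion $\Delta_b\subseteq\Delta$ requires knowing that the one-dimensional kernel direction $v$ sits in $\Delta$ and that the reduction respects the invariant subspace structure (so the branch does not "leak" out of $\Delta$); the reverse inclusion requires translating "$b$ has synchrony strictly smaller than $\Delta$" into "$\mu$ is an eigenfunction of a proper quotient of $Q$", which uses that the bifurcating eigenvector of $\left.J^N_f\right|_{\Delta_b}$ associated with the crossing is $v$ and that simplicity forbids $\mu$ from also being realised on an intermediate quotient — precisely where $\mu$-maximality is used. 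Everything else (the Lyapunov–Schmidt machinery, finite determinacy of the scalar equation, genericity of the non-degeneracy conditions) is standard and I would cite \cite{GS85} rather than reproduce it.
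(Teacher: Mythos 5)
Your proposal is correct and follows essentially the same route as the paper: pass to the quotient network determined by $\Delta$ (equivalently, observe $v\in\Delta$ and that the reduction restricts to $\Delta$), perform a one-dimensional Lyapunov--Schmidt reduction using simplicity of $\mu$, use genericity of the crossing condition $g_{x\lambda}=\mu(f_{0\lambda},\dots,f_{k\lambda})\neq 0$ together with a nonvanishing finite-order derivative to produce the branch via the Implicit Function Theorem, and invoke $\mu$-maximality to conclude the branch has synchrony exactly $\Delta$. The only point where you are lighter than the paper is the claim that $h_s(0,0)$ or $h_{ss}(0,0)$ can generically be made nonzero: for admissible vector fields this is not automatic (and the relevant order $r$ need not be $2$ or $3$ in general), and the paper justifies it by citing \cite[Theorem 6.1]{SG11}, which guarantees some $r$ with $\langle v^{*},v^{[r]}\rangle\neq 0$ so that the coefficient of $f_{0^{r}}$ in $g_{x^{r}}$ is nonzero and the non-degeneracy condition is non-vacuous.
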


\begin{proof}
Note that $\mu$ is assumed to be a simple eigenvalue of the quotient network 
of $\mathcal{N}$ determined by $\Delta$. 
Thus, we look for steady-state bifurcation branches with synchrony $\Delta$ which corresponds to solve 
the following equation in a neighbourhood of $(0,0)$:
$$
f^{\mathcal{N}}(x,\lambda) = 0,\quad (x,\lambda) \in \Delta \times \R\, . 
$$

Assume $\Delta$ is (isomorphic to) $\mathbb{R}^n$. 
In order to study the equation $f^{\mathcal{N}}(x,\lambda)=0$ in a neighbourhood of $(0,0)$, we use Lyapunov-Schmidt Reduction, 
\cite[Chapter I, Section 3; Chapter VII]{GS85}.  Since $\mu$ is simple, we can find $v, v^*\in \mathbb{R}^n$ such that $\ker(J_f^{\mathcal{N}})=\{\alpha v: \alpha\in\mathbb{R}\}$, $\ran( J_f^{\mathcal{N}})^{\bot}=\{\alpha v^*: \alpha\in\mathbb{R}\}$ and  $\langle v^*,v\rangle=1$. There exists a function $g:\mathbb{R}\times\mathbb{R}\rightarrow\mathbb{R}$ such that the solutions of $g=0$ are in one-to-one correspondence with the solutions of $f=0$. 
Moreover, the derivatives of $g$ at the origin can be computed using the derivatives of $f$ at the origin. In particular,
$$g_x=0, \quad g_\lambda=0, \quad g_{x\lambda}=\langle v^*, (D_x f_\lambda^{\mathcal{N}})_{(0,0)}(v)\rangle= 
\mu(f_{0\lambda},f_{1\lambda},\dots,f_{k\lambda})\, .$$
Since $f$ is generic, we can assume that $g_{x\lambda}\neq 0$. 

Next, we see that there exists $r>1$ such that  the $r$-derivative of $g$ with respect to $x$, $g_{x^{r}}$, is different from zero.
Using the formulas for the derivative of $g$, presented in \cite{GS85}, and the form of the admissible vector fields we know that
$$g_{x^{r}}= f_{0^r}\langle v^*,v^{[r]}\rangle + h_r(f),$$
where $f_{0^r}$ is the $r$ derivative of $f$ with respect to the first variable, $v^{[r]}=(v_1^r,\dots,v_n^r)$ and $h_r(f)$ is a polynomial function where its variables are the derivatives of $f$ with order less or equal to $r$ excluding $f_{0^r}$.
By \cite[Theorem 6.1.]{SG11}, we know that there exists $r$ such that $\langle u,v^{[r]}\rangle\neq 0$.
Since $f$ is generic, we can assume that $$f_{0^r}\neq - h_r(f)/\langle u,v^{[r]}\rangle \Leftrightarrow g_{x^{r}}\neq 0\, .$$
Let $r$ be the minimum integer such that $g_{x^{r}}\neq 0$ for $f$ generic.

Thus the function $g$ has the following Taylor expansion around the origin:
$$g(x,\lambda)=g_{x\lambda}x\lambda+ \frac{g_{x^{r}}}{r!}x^r + \cdots,$$
where $\cdots$ includes the terms which are higher order in $x$ and may depend at the bifurcation parameter $\lambda$.
Factoring out the variable $x$, we can apply the Implicit Function Theorem and conclude that there exists a nontrivial solution $\beta:]-\epsilon,\epsilon[\rightarrow\mathbb{R}$ such that 
$$g(x,\beta(x))=0.$$
This proves the existence of a bifurcation branch.
It follows from the maximality of the synchrony subspace that the bifurcation branch cannot be contained in any smaller synchrony subspace.
\end{proof}

\vspace{2mm}

In the previous proof, we defined $r$ as the minimum integer such that $g_{x^{r}}\neq 0$. 
In this case, we say that the {\it bifurcation problem is $r$-determined}.

\vspace{1mm}

\subsection{Semisimple eigenvalue and submaximal synchrony spaces} 

We consider now  two cases of synchrony-breaking steady-state bifurcations where the network eigenvalue $\mu$ is semisimple, $f\in\mathcal{V}_{\mu}({\mathcal{N}})$ and $\Delta$ is a $\mu$-submaximal synchrony subspace. 
In the first, we suppose that the eigenvalue has multiplicity $m$ and the submaximal synchrony subspace has order $2^m-1$.
In the second case, the network eigenvalue is the network valency.
The second case have been studied in Aguiar, Dias and Soares \cite{ADS19} and we recall known results in the end of this section. 
We note that the results we obtain can be applied independently of the network number of cells.

Suppose 
$\mu$ has multiplicity $m$ and $\Delta$ has order $2^m-1$.
Let $\Delta_1,\dots,\Delta_{2^m-1}$ be the $\mu$-maximal synchrony subspaces.
It follows from Theorem~\ref{thm:LSred1} that each $\mu$-maximal synchrony subspaces generically supports a bifurcation branch when $f\in\mathcal{V}_{\mu}({\mathcal{N}})$.
We prove in Theorem~\ref{thm:LSred2}  below that,  under certain conditions strictly depending on the network, there are no more bifurcation branches.
Using the terminology of Definition~\ref{def:Delta_supports}, we have then that 
$\Delta$ does not generically support a bifurcation branch when $f\in\mathcal{V}_{\mu}({\mathcal{N}})$.

In the proof of Theorem~\ref{thm:LSred2}  below, 
we use the  Lyapunov-Schmidt Reduction method to obtain a function $g:\mathbb{R}^m\times\mathbb{R}\rightarrow\mathbb{R}^m$ such that the solutions of $g=0$ (near the origin) are in one-to-one correspondence with the solutions of $f^{\mathcal{N}}=0$ (near the origin).
Since $f(0,\lambda)\equiv 0$ and each $\Delta_1,\dots,\Delta_{2^m-1}$ supports a bifurcation branch, we know that the reduced problem $g=0$ has at least $2^m$ zeros.
Supposing that the study of $g=0$ is equivalent to its quadratic approximation, we apply then
B\'ezout's Theorem \cite[Section 10.5]{BCSS98} to conclude that there are exactly those $2^m$ zeros.
Thus every bifurcation branch belongs to the $\mu$-maximal synchrony subspaces and $\Delta$ does not generically support a bifurcation branch.
The result holds under the following assumptions:

\begin{asps}\label{asps:LSred2}
Let $h$ be the second-order Taylor expansion at zero of the reduced function $g$.  Assume that:\\
{\bf 1.} $D_{(y,\lambda)}h(\tilde{y},1)$ has rank $m$, for each solution of $h(\tilde{y}\lambda,\lambda)=0$ with $\tilde{y}\neq 0$.\\
{\bf 2.} The homogeneous quadratic polynomial components  $h_i$, for $i=1,\dots, m$, of $h$  do not share a common factor.
\end{asps}

Depending on the network, these assumptions lead to non-degeneracy conditions of the bifurcation problem.
It follows from the first assumption that we can focus on the quadratic approximation of $g=0$. 
B\'ezout's Theorem can be applied when the second assumption holds.
First we state and prove the result, then we return to the previous assumptions and see how they depend on the network structure.

\begin{thm}\label{thm:LSred2}
Let $\mathcal{N}$ be a network with k asymmetric inputs,
$\mu$ a network eigenvalue, $f\in\mathcal{V}_{\mu}({\mathcal{N}})$ generic and $\Delta$ a synchrony subspace of $\mathcal{N}$ which is $\mu$-semisimple with multiplicity $m$ and  $\mu$-submaximal with order $2^m-1$.
Suppose that the bifurcation problem on each $\mu$-maximal synchrony subspace of $\mathcal{N}$ contained in $\Delta$ is $2$-determined and that Assumptions~\ref{asps:LSred2} hold.
Then there is no bifurcation branch of $f^{\mathcal{N}}$ with synchrony $\Delta$.
\end{thm}

\begin{proof}
Looking for steady-state bifurcation branches with synchrony $\Delta$ corresponds to solve the following equation in a neighbourhood of $(0,0)$, 
$$
f^{\mathcal{N}}(x,\lambda) = 0,\quad (x,\lambda) \in \Delta \times \R.
$$
As before,  assume $\Delta$ is (isomorphic to) $\mathbb{R}^n$. 

Let $\mu$ be a semisimple network eigenvalue with geometric
multiplicity $m$ and $f\in\mathcal{V}_{\mu}({\mathcal{N}})$ generic.
Suppose that $\R^n$ is $\mu$-submaximal with order $2^m-1$ and that Assumptions~\ref{asps:LSred2} hold.
Let $\Delta_1,\dots,\Delta_{2^m-1}$ be the $\mu$-maximal synchrony subspaces such that the bifurcation problem is $2$-determined on $\Delta_i$ for $ i=1,\dots, 2^m-1$.

Take $v_1,v_2,\dots,v_m, v_1^*,v_2^*,\dots, v_m^* \in \R^n$ such that $\ker(J_f^{\mathcal{N}}) = \mbox{span} \left(\{v_1,v_2,\dots,v_m\}\right)$, $ \ran(J_f^{\mathcal{N}})^{\bot} = \mbox{span} \left( \{v_1^*,v_2^*,\dots, v_m^*\} \right)$ and 
$$\displaystyle 
\left[
\begin{matrix}
\langle v^*_1,v_1\rangle & \langle v^*_1,v_2\rangle & \dots & \langle v^*_1,v_m\rangle\\ 
\langle v^*_2,v_1\rangle & \langle v^*_2,v_2\rangle & \dots & \langle v^*_2,v_m\rangle\\
\vdots & \vdots & \ddots& \vdots\\
\langle v^*_m,v_1\rangle & \langle v^*_m,v_2\rangle & \dots & \langle v^*_m,v_m\rangle
\end{matrix} 
\right] 
= \mbox{Id}_m\, .$$

By the Lyapunov-Schmidt Reduction Method, there exists a function $g:\mathbb{R}^m\times\mathbb{R}\rightarrow\mathbb{R}^m$ such that the solutions of $g=0$ (near the origin) are in one-to-one correspondence with the solutions of $f^{\mathcal{N}}=0$ (near the origin). 
Moreover, we have the following derivatives of $g$ at the origin
$$g_{x_i}=0, \quad g_\lambda=0, \quad g_{\lambda\lambda}=0, \quad (g_{x_i\lambda})_i=\mu(f_{0\lambda},\dots,f_{k\lambda}), \quad (g_{x_i\lambda})_j=0,$$
where $i,j=1,\dots,m$ and $j\neq i$. 
Since $f$ is generic we assume that $\mu(f_{0\lambda},\dots,f_{k\lambda})\neq 0$.
Expanding $g$ in its Taylor series around the origin, we have that
$$
g(x_1,\dots,x_m,\lambda)= \mu(f_{0\lambda},\dots,f_{k\lambda})\lambda\,  \mbox{Id}_m \, x+ Q(x)+\cdots,
$$
where $Q=(q_1,\dots,q_n)$ has quadratic homogeneous polynomials components in $x$.
Denote by
$h$ the second-order Taylor expansion truncation
of $g$, 
\begin{equation}
h(x_1,\dots,x_m,\lambda)= \mu(f_{0\lambda},\dots,f_{k\lambda})\lambda\,  \mbox{Id}_m \, x + Q(x), 
\label{eq:homquadh}
\end{equation}
which satisfies
$$h(\lambda x_1,\dots,\lambda x_m,\lambda)= \lambda^2(\mu(f_{0\lambda},\dots,f_{k\lambda}) x + Q(x))=\lambda^2 h(x_1,\dots,x_m,1)\, .$$
Since $\Delta_1,\dots,\Delta_{2^m-1}$ are $\mu$-maximal synchrony subspaces, it follows from Theorem~\ref{thm:LSred1} that $\Delta_i$ supports a bifurcation branch for each  $i=1,\dots, 2^m-1$.
Adapting \cite[Proposition 3.5]{S17}, we know for each $i=1,\dots, 2^m -1$ that the bifurcation branch on $\Delta_i$ leads to a non-trivial solution $\tilde{y}^i=(\tilde{y}^i_1,\dots,\tilde{y}^i_m)$ of $h(x_1,\dots, x_m,1)=0$, since the bifurcation problem is $2$-determined on $\Delta_i$.
Moreover, we have that $\tilde{y}^i_1 v_1+\dots+\tilde{y}^i_m v_m\in\Delta_i$ and the solutions $\tilde{y}^1, \dots, \tilde{y}^{2^m -1}$ are distinct.
Using Assumption~\ref{asps:LSred2}.1 and adapting \cite[Proposition 3.6]{S17}, we see that each of these solutions corresponds to an unique solution of $g=0$ and a unique bifurcation branch of $f^{\mathcal{N}}$.

From B\'ezout's Theorem \cite[Section 10.5]{BCSS98} and Assumption~\ref{asps:LSred2}.2, the system $$h(x_1,\dots,x_m,1)=(h_1(x_1,\dots,x_m,1),\dots,h_m(x_1,\dots,x_m,1))=0$$ has at most $2^m$ solutions as the polynomials $h_1,\dots,h_m$ are homogeneous of degree $2$.
Counting the solutions $\tilde{y}^1, \dots, \tilde{y}^{2^m -1}$ together with the trivial solution $\tilde{y}=0$, there are $2^m$ solutions of $h(x_1,\dots,x_m,1)=0$.
So there is no more solutions of $h(x_1,\dots,x_m,1)=0$ and there are exactly the $2^m -1$ bifurcation branches. 
Thus the $\mu$-submaximal synchrony subspace does not support a bifurcation branch.
\end{proof}

\begin{rem}
Note that the valency eigenvalue of a network with $n$ cells has multiplicity $n$ if and only if every cell of the network is isolated from the others.
Thus any network eigenvalue of a connected network with $n$ cells has multiplicity less or equal than $n-1$.
Moreover, $\R^n$ has $2^{n-1}-1$ polydiagonal subspaces with dimension $2$ since each polydiagonal subspace with dimension $2$ defines a partition of the set $\{ 1,\ldots,n\}$ into two complementary subsets.
Thus there are at most $2^{n-1}-1$ synchrony subspaces with dimension $2$.
\hfill $\Diamond$
\end{rem}

\begin{rem}
In Theorem~\ref{thm:LSred2}, we assume that the bifurcation condition is given by a semisimple network eigenvalue $\mu$ with multiplicity $m$ and the submaximal subspace has order $2^m-1$. From the previous remark, we see that this condition is not expected to occur frequently for coupled cell systems. Nevertheless, we show in the  three-cell 
networks steady-state bifurcations classification done in the  following two sections that there is one network for which the bifurcation condition  assumed in Theorem~\ref{thm:LSred2} holds. Namely, network $C_1 \& D_1$ in Table~\ref{tab:C3L12.tex} has  the eigenvalue $f_0$ with multiplicity $2$, the network phase space $\R^3$ is $f_0$-submaximal with order $3$ (see Figure~\ref{lat:C2L3s}) and Theorem~\ref{thm:LSred2} holds. 
\hfill $\Diamond$ 
\end{rem}

We see now that Assumptions~\ref{asps:LSred2} depend solely on the network structure. 
More precisely, the dependence is on 
the second derivatives of the function $g$ at the origin, which in their turn 
are computed using only
the second derivatives of the function $f$ at the origin, and the vectors $v_1,\dots,v_m,v_1^*,\dots,v_m^*$. 
In order to calculate the second derivatives of $g$ and $h$ at the origin, we use the formulas provided in \cite[Chapter I, Section 3; Chapter VII]{GS85}.

The function $f^{\mathcal{N}}$ has the following second-order Taylor expansion around the origin
$$f^{\mathcal{N}}(x,\lambda)=\sum_{j=0}^k f_j A_j x+\sum_{j=0}^k f_{j\lambda} \lambda A_j x + \sum_{j_1=0}^k \sum_{j_2=0}^k \frac{f_{j_1j_2}}{2} (A_{j_1} x)*(A_{j_2} x)+\cdots$$ 
and the second order directional derivative of $f^{\mathcal{N}}$ in the directions $v_a$ and $v_b$ is
$$d^2f^{\mathcal{N}}(v_a,v_b)=\sum_{j_1=0}^k \sum_{j_2=0}^k f_{j_1j_2} (A_{j_1} v_a)*(A_{j_2} v_b),$$
where
$a,b=1,\dots, m$ and $f_{j_1j_2}$ is the second derivative of $f$ with respect to $x_{j_1}$ and $x_{j_2}$ at the origin for $j_1,j_2=0,\dots,k$. 
Here, $*$ denotes the componentwise product.
The vectors $v_1,\dots,v_m$ usually depend on the first order derivatives of $f$ and the expressions above cannot be further simplified. 
Note that the bifurcation condition may be seen as a restriction on the value of $f_0$.
The functions $g$ and $h$ have the same second order derivatives at the origin
$$(g_i)_{ab}=(h_i)_{ab}=\sum_{j_1=0}^k \sum_{j_2=0}^k f_{j_1j_2} <v_i^*,(A_{j_1} v_a)*(A_{j_2} v_b)>,$$
where $i,a,b=1,\dots, m$.

The first assumption in Assumptions~\ref{asps:LSred2} may be expressed as a set
of inequalities using the previous derivatives, excluding $f_0$.
Thus, if those inequalities are nontrivial, they lead to non-degeneracy conditions on the function $f$.
The fact that those inequalities are trivial or not depends only on the network structure.
Hence, if the first assumption in Assumptions~\ref{asps:LSred2} holds for some function, then they must hold for any generic function $f$.

For the second assumption in Assumptions~\ref{asps:LSred2}, we have the following lemma about the common factors of the components $h_1,\dots,h_m$ of the function $h$ given in (\ref{eq:homquadh}).

\begin{lemma}\label{aux_lemma}
Let $h_1,\dots,h_m:\, \R^{m+1} \to \R$ be polynomial functions with the following form
$$h_i(x_1,\dots,x_m,\lambda)=\mu x_i \lambda + q_i(x_1,\dots,x_m),$$
where $i=1,\dots,m$ and $q_i$ are homogeneous quadratic polynomials.
If $i\not=j$ and $h_i$ and $h_j$ have a common factor, then
\begin{equation}\label{formulas}
\begin{array}{lll}
(h_i)_{dl}=0 \ (\mbox{if } l,d\neq i),  &   (h_j)_{dl}=0 \ (\mbox{if } l,d\neq j), & (h_i)_{ki}= (h_j)_{kj}
\end{array}
\end{equation}
where $(h_a)_{bc}$ denotes the second order derivative of $h_a$ with respect to the variables $x_b$ and $x_c$ at the origin and $ a,b,c=1, \dots , m$.
\end{lemma}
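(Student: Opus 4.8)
The plan is to exploit the very rigid shape of the components $h_i$: by (\ref{eq:homquadh}), each $h_i(x_1,\dots,x_m,\lambda)=\mu\,x_i\lambda+q_i(x_1,\dots,x_m)$ is a \emph{homogeneous quadratic form in the $m+1$ variables} $x_1,\dots,x_m,\lambda$, and its part linear in $\lambda$ is exactly $\mu\,x_i\lambda$, where $\mu=\mu(f_{0\lambda},\dots,f_{k\lambda})\neq 0$ for generic $f$. First I would observe that a non-constant common factor of $h_i$ and $h_j$ must be a \emph{linear} form. Indeed, $h_i$ and $h_j$ cannot be proportional, since equating the $\lambda$-linear parts in an identity $h_i=\gamma h_j$ would give $\mu x_i=\gamma\mu x_j$, impossible for $i\neq j$ as $\mu\neq0$; hence a common factor has degree one (and in particular $h_i,h_j$ are both reducible). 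So one can write $h_i=L\,M_i$ and $h_j=L\,M_j$ with $L,M_i,M_j$ real linear forms in $x_1,\dots,x_m,\lambda$.

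The next step is a short case analysis on whether the common factor $L$ involves $\lambda$. The coefficient of $\lambda^2$ in $h_i$ is zero, so at most one of $L$, $M_i$ has a nonzero $\lambda$-coefficient. If $L$ were $\lambda$-free, then, writing $c_i$ for the $\lambda$-coefficient of $M_i$, the $\lambda$-linear part of $h_i=L M_i$ equals $c_i\lambda\,L$, which must be $\mu x_i\lambda$; this forces $L=(\mu/c_i)x_i$, and the same reasoning for $h_j$ would force $L$ proportional to $x_j$ as well --- a contradiction since $i\neq j$. Hence $L$ has a nonzero $\lambda$-coefficient $\beta$, so $M_i,M_j$ are $\lambda$-free, and then $\partial_\lambda h_i=\mu x_i$ together with $\partial_\lambda(L M_i)=\beta M_i$ gives $M_i=(\mu/\beta)\,x_i$, and likewise $M_j=(\mu/\beta)\,x_j$ with the \emph{same} $\beta$. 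Writing $c=\mu/\beta$ and $L=\beta\lambda+\sum_{p=1}^{m} b_p x_p$, we conclude
$$h_i=c\,x_i\Big(\beta\lambda+\textstyle\sum_{p=1}^{m}b_p x_p\Big),\qquad h_j=c\,x_j\Big(\beta\lambda+\textstyle\sum_{p=1}^{m}b_p x_p\Big).$$

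It then remains only to read off the second $x$-derivatives at the origin. Since $\mu x_i\lambda$ contributes nothing to derivatives taken solely in $x$-variables, one gets $(h_i)_{dl}=\partial_{x_d}\partial_{x_l}\big(c\sum_p b_p x_i x_p\big)=c\,(\delta_{il}b_d+\delta_{id}b_l)$, which vanishes whenever $d,l\neq i$; symmetrically $(h_j)_{dl}=c\,(\delta_{jl}b_d+\delta_{jd}b_l)=0$ for $d,l\neq j$. Finally, for an index $k\notin\{i,j\}$ one reads $(h_i)_{ki}=c\,b_k=(h_j)_{kj}$, which is the third identity in (\ref{formulas}); when no such $k$ exists (e.g. $m=2$) that identity is vacuous, the content being $(h_1)_{22}=(h_2)_{11}=0$.

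Everything after the factorization step is bookkeeping of the coefficients $b_p$. The one point that needs care --- and what I expect to be the main obstacle to a clean write-up --- is the dichotomy on whether the common factor $L$ carries the variable $\lambda$: making it airtight requires first ruling out that $h_i$ and $h_j$ are proportional (so that the common factor is genuinely linear) and then using both the vanishing of the $\lambda^2$-coefficient of $h_i$ and the genericity hypothesis $\mu\neq0$ to discard the $\lambda$-free alternative for $L$.
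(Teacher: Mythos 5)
Your proof is correct and follows essentially the same route as the paper's: show the common factor must be a linear form, deduce the explicit factorizations $h_i=x_i(\mu\lambda+\sum_l c_l x_l)$ and $h_j=x_j(\mu\lambda+\sum_l c_l x_l)$, and read off the second derivatives. You simply fill in details the paper leaves implicit (ruling out proportionality of $h_i,h_j$ and the dichotomy on whether the common factor carries $\lambda$), and your restriction of the identity $(h_i)_{ki}=(h_j)_{kj}$ to $k\notin\{i,j\}$ matches the intended reading of the statement.
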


\begin{proof}
Let $h_1,\dots,h_m$ be the functions with the given form.
Suppose that $h_i$ and $h_j$ have a common factor.
Since there is only one term of $h_i$ which includes the variable $\lambda$ and it is equal to $\mu x_i \lambda$, 
we know that the common factor of $h_i$ and $h_j$ must be linear.
That is, $h_i$ and $h_j$ have the following form: 
$$
\begin{array}{ll}
h_i(x_1, \ldots, x_m,\lambda) = x_i  \left( \mu  \lambda +   \displaystyle \sum_{l=1}^m c_l x_l\right), &
h_j(x_1, \ldots, x_m,\lambda) = x_j \left( \mu  \lambda +    \displaystyle \sum_{l=1}^m c_l x_l\right)
\end{array}
$$
for constants $c_1, \ldots, c_m \in \R$. Thus 
$$
\begin{array}{ll}
q_i(x_1, \ldots, x_m) = \displaystyle \sum_{l=1}^m c_l x_i x_l, \quad & \quad 
q_j(x_1, \ldots, x_m) = \displaystyle \sum_{l=1}^m c_l x_j x_l\, .
\end{array}
$$ We have then the equalities (\ref{formulas}).
\end{proof}

It follows from Lemma~\ref{aux_lemma} and the expression for the second derivatives of $h$ given above that the second assumption in Assumptions~\ref{asps:LSred2} leads to a set 
of inequalities that may be expressed using the derivative of $f$, excluding $f_0$.
If those inequalities are nontrivial, then they form a set of non-degeneracy conditions on the function $f$.
Thus if  both Assumptions~\ref{asps:LSred2} are satisfied by one function $f$ for a given network $\mathcal{N}$, then every generic coupled cell system of $\mathcal{N}$ satisfies Assumptions~\ref{asps:LSred2}.

We finish this section with the bifurcation problem when the bifurcation condition is given by the valency, $\upsilon$. 
We point out that this case has  been studied in \cite{ADS19}. 

We recall the definition of a source of a network:

\begin{Def}
Let $\mathcal{N}$ be a network. 
We say that a subset of cells $S$ is a {\it source} if every edge targeting a cell in $S$ starts in a cell of $S$, there exists a directed path between any two cells of that subset and $S$ is the maximal subset that satisfy the previous two conditions.
The {\it number of source components} of a network is denoted by $s({\mathcal{N}})$.
\hfill $\Diamond$
\end{Def}

\begin{exam}
The network $E_6\&E_4$ in Table~\ref{tab:C3L12.tex} of Section~\ref{subsec_case_study} has two source components $S_1=\{1\}$ and $S_2=\{3\}$.
\hfill $\Diamond$ 
\end{exam}

Note that there exists at least one source, i.e., $s({\mathcal{N}})\geq 1$.

In a network with $k$ asymmetric inputs, the polydiagonal where $x_i = x_j$ for all the cells $i,j$ in one source is a network synchrony space because  all the cells inside a source receive $k$ edges, only from cells inside that source.
Suppose that $s({\mathcal{N}})\geq 2$. 
Given two source components, the polydiagonal subspace given by $x_i = x_j$ for all the cells $i,j$ in these two source components is also a synchrony subspace.
Repeating this synchronization for more sources, we can find synchrony subspaces such that the corresponding quotient network has exactly two source components. 
In particular, we can find the smallest synchrony subspace where the corresponding quotient network has only two source components.

In those smallest synchrony subspaces the valency eigenvalue is semisimple with multiplicity $2$ and they are $\upsilon$-submaximal with order $1$.
We denote those smallest synchrony subspaces by {\it valency synchrony-breaking subspaces} and the name is clear by the next result.

\begin{prop}\cite[Proposition 5.7]{ADS19}\label{prop:valsynbre}
Let $\mathcal{N}$ be a network with asymmetric inputs, $\Delta$ a  valency synchrony-breaking subspace of $\mathcal{N}$ and $f\in\mathcal{V}_{\upsilon}({\mathcal{N}})$ generic where $\upsilon$ denotes the valency eigenvalue of $\mathcal{N}$. Then there exists a bifurcation branch of $f^{\mathcal{N}}$ with the synchrony associated to $\Delta$.
\end{prop}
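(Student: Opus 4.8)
\emph{Proof idea.} The plan is to pass to the quotient network $Q:=N/\Delta$, exploit the strong structure forced on $Q$ by the minimality in the definition of a valency synchrony-breaking subspace, and build the branch there. Since $\upsilon$ is an eigenfunction of $Q$ and $\mathcal{V}_{\upsilon}(N)=\mathcal{V}_{\upsilon}(Q)$, it suffices to produce, for generic $f\in\mathcal{V}_{\upsilon}(Q)$, a bifurcation branch of $f^Q$ with synchrony the whole phase space of $Q$; under the lattice isomorphism between the synchrony subspaces of $N$ contained in $\Delta$ and those of $Q$, this lifts to a branch of $f^N$ with synchrony exactly $\Delta$.

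The key step is a structural lemma: the two source components of $Q$ are single cells, say cells $1$ and $2$, each carrying only self-loops, and every proper synchrony subspace of $Q$ is contained in $\{x_1=x_2\}$. For the first part, recall that for any source component $S$ the polydiagonal of $Q$ given by $x_i=x_j$ ($i,j\in S$) is a synchrony subspace (cells in $S$ only receive inputs from $S$), and collapsing it leaves exactly two source components; so if some source component had more than one cell this polydiagonal would be a proper synchrony subspace of $Q$, hence a synchrony subspace $\Delta'\subsetneq\Delta$ of $N$ whose quotient still has exactly two source components — contradicting the minimality of $\Delta$. A single-cell source component has all $k$ of its inputs equal to self-loops. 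For the second part: if a proper synchrony subspace $\tilde\Delta$ of $Q$ did not identify cells $1$ and $2$, then in $Q/\tilde\Delta$ the two self-looped cells would still be distinct source components; since passing to a quotient cannot increase the number of source components (the preimage of each source component of the quotient is closed under in-edges, hence contains a source component of $Q$), $Q/\tilde\Delta$ would have exactly two source components, again contradicting minimality.

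I would then run the bifurcation analysis on $Q$. On the source cells the system reads $\dot x_1=\phi(x_1,\lambda)$, $\dot x_2=\phi(x_2,\lambda)$ with the \emph{same} $\phi(x,\lambda):=f(x;x;\dots;x)$, and $\phi(0,\lambda)\equiv0$, $\phi_x(0,0)=\upsilon(f_0,\dots,f_k)=0$. For generic $f$, $\phi$ undergoes a transcritical bifurcation (the conditions $\phi_{x\lambda}(0,0)=\upsilon(f_{0\lambda},\dots,f_{k\lambda})\neq0$ and $\phi_{xx}(0,0)\neq0$ do not involve $f_0$ alone and are compatible with $\upsilon=0$), so there is a branch $x=\psi(\lambda)$ with $\psi(0)=0$ and $\psi(\lambda)\neq0$ for $0<|\lambda|$ small. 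Setting $x_1=\psi(\lambda)$, $x_2=0$, it remains to solve the equations of the downstream cells $3,\dots,m$ for fixed $x_1,x_2$; at $\lambda=0$ the origin is a solution with Jacobian block $C=f_0\,\mathrm{Id}+\sum_{j=1}^k f_j\widehat{A}_j$, where $\widehat{A}_j$ denotes the restriction of $A_j$ to the downstream cells, and on $\{\upsilon=0\}$ one has $C=\sum_{j=1}^k f_j(\widehat{A}_j-\mathrm{Id})$. One needs $\det C\not\equiv0$ on $\{\upsilon=0\}$, so that $\det C\neq0$ is a legitimate genericity condition; this is the one not-entirely-routine point and is exactly where the source-component hypothesis is used. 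For $f_1=\dots=f_k=1$, $C=\big(\sum_j\widehat{A}_j\big)-k\,\mathrm{Id}$, and $\sum_j\widehat{A}_j$ is nonnegative with all row sums $\le k$, a row sum being $<k$ exactly at a downstream cell receiving an input from $1$ or $2$; since every downstream cell has $1$ or $2$ among its ancestors (its ancestor set is closed under in-edges, hence contains a source component of $Q$), no nonempty set of downstream cells is both strongly connected and closed under in-edges, which forces the spectral radius of $\sum_j\widehat{A}_j$ strictly below $k$, so $C$ is invertible. The Implicit Function Theorem then yields downstream coordinates $x_i=\chi_i(\lambda)$, $\chi_i(0)=0$, hence a branch $b(\lambda)=(\psi(\lambda),0,\chi_3(\lambda),\dots,\chi_m(\lambda))$.

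Finally, since $\psi(\lambda)\neq0=x_2$ for small $\lambda\neq0$, the branch $b$ is not contained in $\{x_1=x_2\}$, hence, by the structural lemma, in no proper synchrony subspace of $Q$; thus $b$ has synchrony the full phase space of $Q$, and the corresponding branch of $f^N$ has synchrony exactly $\Delta$. I expect the structural lemma, together with the invertibility of the downstream block $C$, to be the main obstacle; everything else is a one-dimensional bifurcation plus the Implicit Function Theorem.
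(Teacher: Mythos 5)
The paper does not actually prove this proposition: it is imported verbatim from \cite[Proposition 5.7]{ADS19}, so there is no in-paper argument to compare yours against. Judged on its own, your proof is correct and self-contained. The two genuinely nontrivial points are exactly the ones you flag, and both are handled properly. First, the structural lemma: minimality of $\Delta$ forces each source component of $Q=N/\Delta$ to be a single self-looped cell (otherwise collapsing that source gives a strictly smaller synchrony subspace whose quotient still has two sources), and forces every proper synchrony subspace of $Q$ to identify the two source cells (otherwise, since quotients cannot increase the number of source components and the two self-looped classes remain distinct sources, one again contradicts minimality); both reductions use correctly the standard correspondence between synchrony subspaces of $Q$ and synchrony subspaces of $N$ contained in $\Delta$. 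Second, the invertibility of the downstream block $C=\sum_{j=1}^k f_j(\widehat{A}_j-\mathrm{Id})$ on $\{\upsilon=0\}$: your Perron--Frobenius argument at $f_1=\dots=f_k=1$ is sound, since $\rho\bigl(\sum_j\widehat{A}_j\bigr)=k$ would produce a nonempty in-closed set of downstream cells and hence a third source component of $Q$; this shows $\det C$ is not identically zero on the bifurcation variety, so its nonvanishing is a legitimate genericity condition (and, after substituting $f_0=-\sum_{j\ge1}f_j$, one depending only on $f_1,\dots,f_k$, consistent with the paper's convention). The rest --- the transcritical branch $x_1=\psi(\lambda)$, $x_2=0$ on the two decoupled source cells, the Implicit Function Theorem for the downstream coordinates, and the conclusion that the resulting branch lies in no proper synchrony subspace of $Q$ because it leaves $\{x_1=x_2\}$ --- is routine and correctly executed. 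This is very much in the spirit of the source-component analysis in \cite{ADS19}, so I would not call it a different method, but it is a complete reconstruction of a proof the present paper omits.
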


\subsection{Defective synchrony spaces}
There are networks with defective network eigenvalues, i.e., the algebraic and geometric multiplicity do not coincide.
In this section, we study the case of defective synchrony spaces in networks with any number of cells.
We prove that a $\mu$-defective synchrony subspace with multiplicity $(1,2)$ supports a bifurcation branch.

Consider a network $\mathcal{N}$ with asymmetric inputs, a network eigenvalue $\mu$, a generic function $f\in\mathcal{V}_{\mu}({\mathcal{N}})$ and a synchrony subspace $\Delta$ which is $\mu$-defective with multiplicity $(1,2)$ and $\mu$-submaximal.
Note that $\Delta$ must be submaximal with order $1$, because $\mu$ has geometric multiplicity $1$ and algebraic multiplicity $2$.
Denote by $\Delta_1$ the $\mu$-maximal synchrony subspace contained in $\Delta$.
Note that $\ker({J_f^{\mathcal{N}}})\cap \Delta \subset \Delta_1$ and there are $v_1,v_2 \in \ker({J_f^{\mathcal{N}}}^2)$ and $v^*_1,v^*_2 \in (\ran({J_f^{\mathcal{N}}}^2))^{\bot}$ such that $v^*_1,v_1\in\Delta_1$, $J_f^{\mathcal{N}} v_2= v_1$, $\langle v^*_1, v_1\rangle =1$, $\langle v^*_2, v_2\rangle =1$ and $\langle v^*_1, v^*_2\rangle =0$.
To prove the generic existence of bifurcation branches, we adapt the Lyapunov-Schmidt Reduction Method~\cite{GS85} 
considering vectors in the above conditions. 
Moreover, we need to impose the following condition:
there are $(p,q)$ such that
\begin{equation}
\langle v_2^*,[A_p(v_{2}- (PJ^{\mathcal{N}}_f)^{-1}P v_1)]*[A_q(v_{2}- (PJ^{\mathcal{N}}_f)^{-1}P v_1)]\rangle\neq 0,
\label{eq:2detdef}
\end{equation}
where $A_0= \mbox{Id}$, $A_1$,\dots,$A_k$ are the adjacency matrix of ${\mathcal{N}}$ and $P$ is the projection onto $\ran({J_f^{\mathcal{N}}}^2)$ and kernel $(\ran({J_f^{\mathcal{N}}}^2))^{\bot}$.
This condition leads to a non-degeneracy condition on the function $f$. 

We can now state the result:

\begin{thm} \label{thm:LSred3}
Let ${\mathcal{N}}$ be a network with asymmetric inputs,
$\mu$ be a defective network eigenvalue, $f\in\mathcal{V}_{\mu}({\mathcal{N}})$ generic and $\Delta$ be a $\mu$-submaximal synchrony subspace of ${\mathcal{N}}$ where $\mu$ has multiplicity $(1,2)$.  
Suppose that condition (\ref{eq:2detdef}) holds. 
Then there exists a bifurcation branch of $f^{\mathcal{N}}$ with the synchrony $\Delta$.
\end{thm}

\begin{proof}
Since we are interested in proving the existence of bifurcation branches with the synchrony $\Delta$, 
that is, we look for steady-state bifurcation branches with synchrony $\Delta$,   the following equation has to be solved in a neighbourhood of $(0,0)$:
$$
f^{\mathcal{N}}(x,\lambda) = 0,\quad (x,\lambda) \in \Delta \times \R\, . 
$$

Assume $\Delta$ is (isomorphic to) $\mathbb{R}^n$. 
Denote by $\Delta_1$ the $\mu$-maximal synchrony subspace and let $v_1,v_2 \in \ker({J_f^{\mathcal{N}}}^2)$ and $v^*_1,v^*_2 \in (\ran({J_f^{\mathcal{N}}}^2))^{\bot}$ such that $v^*_1,v_1\in\Delta_1$, $J_f^{\mathcal{N}} v_2= v_1$, $\langle v^*_1, v_1\rangle =1$, $\langle v^*_2, v_2\rangle =1$ and $\langle v^*_1, v^*_2\rangle =0$.
Note that $v_2^*\bot \Delta_1$ and we have the following splits of $\mathbb{R}^{n}$: 
$$\mathbb{R}^{n}= \ker({J_f^{\mathcal{N}}}^2)\oplus \ran({J_f^{\mathcal{N}}}^2),\quad\quad \mathbb{R}^{n}= (\ran({J_f^{\mathcal{N}}}^2))^{\bot}\oplus \ran({J_f^{\mathcal{N}}}^2).$$

Applying the Lyapunov-Schmidt Reduction Method \cite{GS85} to $f^{\mathcal{N}}=0$, we obtain a function $g:\mathbb{R}^2\times\mathbb{R}\rightarrow\mathbb{R}^2$ such that the solutions of $g=0$ are in one-to-one correspondence with the solutions of $f=0$. 
Note that $g_2(x_1,0,\lambda)=0$ and 
$$\frac{\partial g_1}{\partial \lambda}=\frac{\partial g_2}{\partial \lambda}=\frac{\partial g_1}{\partial x_1}=\frac{\partial g_1}{\partial x_2}=\frac{\partial g_2}{\partial x_2}=0 \quad \frac{\partial g_1}{\partial x_2}=1,$$
$$\frac{\partial^2 g_1}{\partial \lambda^2}=\frac{\partial^2 g_2}{\partial x_1 \partial \lambda}=\frac{\partial^2 g_2}{\partial x_1^2}=0,\quad \frac{\partial^2 g_1}{\partial x_1 \partial \lambda}=\frac{\partial^2 g_2}{\partial x_2 \partial \lambda}=\mu(f_{0\lambda},f_{1\lambda},\dots,f_{k\lambda})\neq 0,$$
$$\frac{\partial^2 g_1}{\partial x_2 \partial \lambda}=\mu(f_{0\lambda},f_{1\lambda},\dots,f_{k\lambda})\langle v_1^*,v_2\rangle + 1 \neq 0,$$
for a generic $f$. 

It follows from the Lyapunov-Schmidt Reduction Method that there exists a function
$W:\mathbb{R}^2\times\mathbb{R}\rightarrow \ran({J_f^{\mathcal{N}}}^2)$ such that $Pf^{\mathcal{N}}(x_1v_1+x_2v_2+W(x_1,x_2,\lambda),\lambda)=0$ where $P$ is the projection onto $\ran({J_f^{\mathcal{N}}}^2)$ and kernel $(\ran({J_f^{\mathcal{N}}}^2))^{\bot}$. The first derivatives of $W$ are 
$$W_{1}=0, \quad \quad W_2=- (PJ^{\mathcal{N}}_f)^{-1}P v_1.$$
We can also calculate the second derivatives of $g$ and we obtain that
$$(g_i)_{j_1 j_2}:=\frac{\partial^2 g_i}{\partial x_{j_1} \partial x_{j_2}}=\langle v_i^*, d^2f^{\mathcal{N}}(v_{j_1}+ W_{j_1},v_{j_2}+W_{j_2})\rangle$$
$$=\sum_{p,q=0}^k f_{pq}\langle v_i^*,[A_p(v_{j_1}+W_{j_1})]*[A_q(v_{j_2}+W_{j_2})]\rangle,$$
where $j_1 j_2=1,2$, $f_{pq}$ are the second order derivative of $f$ at the origin with respect to the variables $p+1$ and $q+1$ and $W_j$ is the derivative of $W$ with respect to $x_j$ at the origin. 

Since $g_2(x_1,0,\lambda)=0$, there is $h_2(x_1,x_2,\lambda)$ such that $g_2(x_1,x_2,\lambda)=x_2h_2(x_1,x_2,\lambda)$ and 
$$g_2(x_1,x_2,\lambda)=0\Leftrightarrow x_2=0\vee h_2(x_1,x_2,\lambda)=0\, .$$

In the first case, $x_2=0$, we are looking for bifurcation branches in the synchrony subspace $\Delta_1$.   
 By Theorem~\ref{thm:LSred1},  there exists a bifurcation branch of steady-state solutions with synchrony $\Delta_1$. 
We are now interested in solving $h_2(x_1,x_2,\lambda)=0$ providing a bifurcation branch of steady-state solutions having synchrony $\Delta$ but which are not  
$\Delta_1$-synchronous. 

In the second case, in order to solve equation $h_2(x_1,x_2,\lambda)=0$, we use condition (\ref{eq:2detdef}) which implies that $(g_2)_{2 2}\neq 0$ for $f$ generic.
It follows from the Implicit Function Theorem that there exists $\beta:\mathbb{R}\times \mathbb{R}\rightarrow \mathbb{R}$ such that $x_2=\beta(x_1,\lambda)$ is the unique solution of $h_2(x_1,x_2,\lambda)=0$ in a neighbourhood of the origin. The derivative of $\beta$ with respect to $\lambda$  at the origin is:   
$$
\beta_{\lambda}=\frac{\partial \beta}{\partial \lambda}=-\frac{(g_2)_{2\lambda}}{(g_2)_{22}}\neq 0\, .
$$
Replacing $x_2$ by $\beta$ in the function $h_2$ of equation $h_2(x_1,x_2,\lambda)=0$, 
we obtain the function 
$$h_1(x_1,\lambda)=g_1(x_1,\beta(x_1,\lambda),\lambda)$$
which has the following nonnull derivative at the origin: 
$$\frac{\partial h_1}{\partial \lambda}=\beta_{\lambda}\neq 0.$$
Again by the  Implicit Function Theorem, there exists a continuous function $\Lambda:\mathbb{R}\rightarrow \mathbb{R}$ such that 
$$h_1(x_1,\Lambda(x_1))=0.$$
Therefore 
$$f^{\mathcal{N}}(x_1 v_1 + \beta(x_1,\Lambda(x_1))v_2+W(x_1,\beta(x_1,\Lambda(x_1)),\Lambda(x_1) ),\Lambda(x_1))=0.$$

Since the function $f$ is generic,  the origin is an isolated zero of the function $f$ at $\lambda=0$. 
Thus the function $\Lambda$ is not constant and we can write, at least part of, the graph $(x_1,\Lambda(x_1))\subset \mathbb{R}^2$ as a graph of a function in $\lambda$. We obtain so a nontrivial bifurcation branch of steady-state solutions of $f^{\mathcal{N}}$ with synchrony $\Delta$. 
\end{proof}

\begin{rem}\label{latfigura60}
The previous proof also holds even if the synchrony subspace is $\mu$-maximal instead of $\mu$-submaximal.  
Specifically, if the synchrony subspace $\Delta$ in Theorem~\ref{thm:LSred3} is $\mu$-maximal instead of $\mu$-submaximal, 
we can still find a solution of $g=0$ with $x_2=0$ and this solution leads to a non-trivial bifurcation branch of steady-state solutions 
 of $f^{\mathcal{N}}$ with synchrony $\Delta$, since $\Delta$ is maximal.
\hfill $\Diamond$
\end{rem}

\section{Steady-state bifurcations for three-cell networks with asymmetric inputs}\label{sec_three_cells}

In this section we address three-cell networks with any number of asymmetric inputs. We obtain the network eigenvalues and lattices of synchrony subspaces which combined with the results of  the previous section  derive  the possible codimension-one steady-state for three-cell networks with asymmetric inputs and corresponding bifurcation diagrams.

\subsection{Eigenvalue structure}

 Let ${\mathcal{N}}$ be a three-cell network with asymmetric inputs and $f^{\mathcal{N}}$ an admissible coupled cell system for ${\mathcal{N}}$. 
Recalling (\ref{eq_Jac}), we have that the 
 Jacobian matrix of $f^{\mathcal{N}}$ at the origin is a $3\times 3$ matrix determined by the adjacency matrices of ${\mathcal{N}}$ and the first derivatives of $f$ at the origin.  It follows, in particular, that  the Jacobian matrix has constant row-sum, say $\upsilon$, which is an eigenvalue of such matrix and $(1,1,1)$ is 
a corresponding eigenvector. For completeness, we collect in Proposition~\ref{Prop:eigen_reg_matrix} the possible eigenvalue structures of a general $3\times 3$ matrix with constant row sum.

\begin{prop} \label{Prop:eigen_reg_matrix}
Let $A$ be a $3 \times 3$ matrix with real entries and  constant row-sum $\upsilon\not=0$. 
Suppose that 
\begin{equation} \label{eq:mat}
A= \left[
\begin{array}{ccc}
a & b& \upsilon-a-b \\
c & d& \upsilon-c-d \\
e & f& \upsilon-e-f
\end{array}
\right]\, .
\end{equation}

\noindent and denote by  $\displaystyle \alpha_0 = \mbox{det}(A) / \upsilon$ and $\alpha_1 = \mbox{tr}(A) - \upsilon$. 
Table~\ref{tab:esA} lists the possible cases for the eigenvalue structure  of the matrix $A$. 
\end{prop}

\begin{proof}
Suppose $A$ is given by (\ref{eq:mat}).  
As $(1,1,1)$ is an eigenvector of $A$ associated with the eigenvalue $\lambda_1 = \upsilon$, taking the non-singular matrix $P$ and the $2 \times 2$ matrix $S$ 
given by 
$$P =  \left[
\begin{array}{ccc}
1 & 0 & 1 \\
0 & 1 & 1 \\
0 & 0 & 1
\end{array}
\right], \qquad 
S = 
 \left[
\begin{array}{cc}
 a-e & b-f \\
c-e & d-f 
\end{array}
\right], 
$$ 
we have that 
$$
P^{-1} A P = \left[
\begin{array}{c|c}
S & 0_{2,1} \\
\hline
e \, \, f & \upsilon
\end{array}
\right]\, .
$$
 Thus $\alpha_0 = \mbox{det}(S) = \mbox{det}(A) /\upsilon, \ \alpha_1 = \mbox{tr}(S) = \mbox{tr}(A) - \upsilon$ and the characteristic polynomial of $A$ is given by $p_A(\mu) = \left| A -\mu \mbox{Id}_3 \right| = -(\mu - \upsilon) (\mu^2 - \alpha_1\mu +\alpha_0)$. Denote by $m_a(\upsilon)$ the algebraic multiplicity of the constant row-sum eigenvalue $\upsilon$. We have the following cases: \\
(i) $m_a(\upsilon)=3$ if and only if $\alpha_0 = \upsilon^2$ and $\alpha_1=2\upsilon$.\\
(ii) $m_a(\upsilon)=2$ if and only if $\alpha_1 \not= 2\upsilon$ and $\upsilon^2 -\alpha_1 \upsilon + \alpha_0 = 0$. \\
(iii) $m_a(\upsilon)=1$ and there is an eigenvalue $\lambda_2 \not=\upsilon$ with algebraic multiplicity $2$ if 
and only if $\alpha_1^2 = 4\alpha_0$ and $\alpha_1\neq 2\upsilon$. 
Moreover, $\lambda_2 = \lambda_3 = \alpha_1/2$ and the geometric multiplicity of $\lambda_2$ is equal to the dimension of the kernel of the matrix $P^{-1} A P - \alpha_1/2 \mbox{Id}_3$. 
Trivially, the dimension of the kernel of the matrix $P^{-1} A P - \alpha_1/2 \mbox{Id}_3$ is equal to the dimension of the kernel of the matrix $B = 2 \left( S - \alpha_1/2 \mbox{Id}_2\right)$. 
Note that 
$$B=\left[
\begin{array}{cc}
(a-e)-(d-f) & 2(b-f) \\
2(c-e) & (d-f)-(a-e)  
\end{array}
\right], 
$$
and so, the dimension of $\mbox{ker}(B)$ is two if and only if $B=0$ if and only if $b-f=0,\, c-e =0,\, d-f = a-e$.\\
(iv) Finally, $A$ has three distinct eigenvalues if and only if $\upsilon$ is not an eigenvalue of $S$ ($\upsilon^2 -\alpha_1 \upsilon + a_0 \not= 0$, that is, $\upsilon (\alpha_1-\upsilon) \neq \alpha_0$) and $S$ has two distinct roots ($\alpha_1^2 \neq 4\alpha_0$).  The roots of the characteristic polynomial of $S$ are real if and only if $\alpha_1^2>4\alpha_0$. 
\end{proof}

\begin{table}[t!]
\resizebox{1 \textwidth}{!}{ 
\begin{tabular}{|c|c||c|c|}
\hline 
Eigenvalues of $A$ & Conditions & Eigenvalues of $A$ & Conditions\\[0.3cm]
\hline 
$\lambda_1 = \lambda_2 = \lambda_3 = \upsilon$ & $\alpha_0=\upsilon^2$ and $\alpha_1=2 \upsilon$ & 
$\lambda_1 = \lambda_2 = \upsilon,\, \lambda_3 = \alpha_1 - \upsilon \not=\upsilon$ & $\alpha_0 = \upsilon (\alpha_1 -\upsilon)$ and $\alpha_1\neq 2\upsilon$\\[0.3cm]
\hline
$\lambda_1 = \upsilon,\, \lambda_2 = \lambda_3 =\frac{\alpha_1}{2} \not=\upsilon$ & $\alpha_1\neq 2\upsilon$ and $\alpha_1^2 = 4\alpha_0$ & 
$\lambda_1 = \upsilon,\, \lambda_2 = \lambda_3 =\frac{\alpha_1}{2} \not=\upsilon$ & $\alpha_1\neq 2\upsilon$ and $\alpha_1^2 = 4\alpha_0$  \\
$A$ is not diagonalizable & $(c-e,b-f, d-a+e-f) \neq (0,0,0)$ & $A$ is diagonalizable & $(c-e,b-f, d-a+e-f) = (0,0,0)$ \\[0.3cm]
\hline 
$\lambda_1 = \upsilon,\, \upsilon \not= \lambda_2 \not= \lambda_3 \not= \upsilon$ & $\upsilon (\alpha_1-\upsilon) \neq \alpha_0 $ and $\alpha_1^2 \neq 4\alpha_0$ & 
$\lambda_1 = \upsilon,\, \upsilon \not= \lambda_2 \not= \lambda_3 \not= \upsilon$ & $\upsilon (\alpha_1-\upsilon) \neq \alpha_0 $ and $\alpha_1^2 \neq 4\alpha_0$ 
\\ 
$\lambda_2,\, \lambda_3 \in \R$ & $\alpha_1^2>4\alpha_0$ & $\lambda_2,\, \lambda_3 \not\in \R$ and $\lambda_2=\overline{\lambda_3}$ & $\alpha_1^2 < 4\alpha_0$ \\
\hline
\end{tabular}
}
 \caption{
Eigenvalues $\lambda_1, \lambda_2, \lambda_3$
 of a $3 \times 3$ matrix $A$ with constant row-sum $\upsilon\not=0$. Here,  $\alpha_0 = \mbox{det}(A) / \upsilon$, $\alpha_1 = \mbox{tr}(A) - \upsilon$ and it is followed  the notation (\ref{eq:mat}) for the entries of the matrix $A$. 
In the top left case $m_a(v) = 3$, in the top right case $m_a(v) =2$ and in the other cases, $m_a(v) =1$. 
 }
 \label{tab:esA}
 \end{table}

\begin{rem}
(i) Note that when the network has one asymmetric input,   the Jacobian matrix of $f^{\mathcal{N}}$ at the origin is $J_f^{\mathcal{N}}= f_0 \mbox{Id}_3+ f_1 A_1$ which has eigenvalues given by $f_0+\mu f_1$, where $\mu$ runs through the eigenvalues of the adjacency matrix $A_1$. This is a special case of (\ref{eq_Jac}).
\\
(ii)
Among the results obtained in ~\cite{ADS20}, it is remarked that the minimal network in Figure~\ref{fig:rep_min} represents the unique ODE-class of minimal three-cell networks with six asymmetric inputs. The Jacobian matrix of $f^{\mathcal{N}}$ at the origin for such network has the form $J_f^{\mathcal{N}}= f_0 \mbox{Id}_3+ \sum_{i=1}^6 f_i  A_i$, where $A_i$, for $i=1, \ldots, 6$ are the network adjacency matrices. It also follows from~\cite{ADS20} that the six adjacency matrices $A_i$ plus the $3 \times 3$ identity matrix generate the linear space of the $3\times 3$ matrices with constant row sum.Therefore, besides the valency eigenvalue of $J_f^{\mathcal{N}}$, the other eigenvalues  are generically arbitrary and simple. Thus there exists an open set of generic functions $f$ where the eigenvalues of the Jacobian matrix $J_f^{\mathcal{N}}$ are distinct and real.
\hfill $\Diamond$
\end{rem}

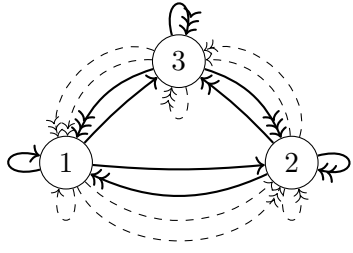
\begin{figure}[h!]
\vspace{-4mm}
\hspace{-4mm}
\begin{tikzpicture}
[scale=.15,auto=left, node distance=1.5cm, every node/.style={circle,draw}]
 \node[fill=white] (n1) at (4,0) {\small{1}};
  \node[fill=white] (n2) at (24,0) {\small{2}};
 \node[fill=white] (n3) at (14,9)  {\small{3}};

\draw[ arrows={->}, thick]  (n1) to [loop left] (n1);
\draw[ arrows={->}, thick] (n1) edge  [bend left=-5] (n2);
\draw[ arrows={->}, thick]  (n1) edge  [bend left=5]  (n3);

 \draw[ arrows={->>}, thick]  (n2) to [loop right] (n2);
\draw[ arrows={->>}, thick]  (n2) edge  [bend left=25] (n1);
\draw[ arrows={->>}, thick] (n2) edge  [bend left=-5] (n3);

 \draw[ arrows={->>>}, thick]  (n3) to [loop above] (n3);
 \draw[ arrows={->>>}, thick]   (n3) edge  [bend left=-25] (n1);
 \draw[ arrows={->>>}, thick]  (n3) edge  [bend left=25] (n2);

 \draw[ arrows={->}, dashed] (n1) to [loop below] (n1);
\draw[ arrows={->}, dashed]  (n1) edge  [bend left=-45] (n2);
\draw[ arrows={->}, dashed]  (n2) edge  [bend left=-45] (n3);

\draw[ arrows={->>}, dashed]  (n2) to [in=285,out=255, loop] (n2);
\draw[ arrows={->>}, dashed]  (n2) edge  [bend left=-65] (n3);
\draw[ arrows={->>}, dashed]  (n3) edge  [bend left=-45] (n1);

\draw[ arrows={->>>}, dashed] (n3) to [loop below] (n3);
\draw[ arrows={->>>}, dashed] (n3) edge  [bend left=-65] (n1);
\draw[ arrows={->>>}, dashed] (n1) edge  [bend left=-65] (n2);
 \end{tikzpicture} 
		\caption{A representative of the minimal class of the networks with three-cells and six asymmetric inputs.}
		\label{fig:rep_min}
\end{figure}

\begin{exam}
Table~\ref{tab:val13cell} lists, up to ODE-equivalence, the minimal connected networks with three-cells and one asymmetric input and the eigenvalues and eigenvectors of the associated adjacency matrices.
\hfill $\Diamond$
\end{exam}

\begin{table}
 \begin{center}
 \resizebox{1 \textwidth}{!}{ 
 {\tiny 
 \begin{tabular}{|cc|c|c|c||cc|c|c|c|}
\hline 
&  & 2D   & Adjacency  & Eigenvalues/ & &  & 2D   & Adjacency  & Eigenvalues/ \\
& Network &  Synchrony &  Matrix & Eigenvectors & & Network &  Synchrony &  Matrix & Eigenvectors \\
&  &  Subspaces &   &  & &  &  Subspaces &   & \\

\hline 
A &
\begin{tikzpicture}
 [scale=.15,auto=left, node distance=1.5cm, every node/.style={circle,draw}]
 \node[fill=white] (n1) at (4,0) {\small{1}};
  \node[fill=white] (n2) at (24,0) {\small{2}};
 \node[fill=white] (n3) at (14,9)  {\small{3}};
 \draw[->, thick] (n1) edge  [bend left=-10] (n2); 
 \draw[->, thick] (n2) edge  [bend left=-10] (n3); 
\draw[->, thick] (n3) edge [bend right=10] (n1); 
\end{tikzpicture} & 
- & 
$\left[
\begin{array}{ccc}
0 & 0 & 1 \\
1 & 0 & 0 \\
0 & 1 & 0
\end{array}
\right]$ & 
$ \begin{array}{rl}
1 & (1,1,1) \\
\psi & (1, \psi^2, \psi) \\
 \psi^2 & (1, \psi, \psi^2) 
 \end{array}$ 
 &
F& 
\begin{tikzpicture}
 [scale=.15,auto=left, node distance=1.5cm, every node/.style={circle,draw}]
 \node[fill=white] (n1) at (4,0) {\small{1}};
  \node[fill=white] (n2) at (24,0) {\small{2}};
 \node[fill=white] (n3) at (14,9)  {\small{3}};
 \draw[->, thick] (n1) edge  [bend left=-10] (n2); 
 \draw[->, thick] (n2) edge  [bend left=-10] (n1); 
\draw[->, thick] (n1) edge [bend right=10] (n3); 
\end{tikzpicture} &
$\begin{array}{l}
\Delta_1 \\
\Delta_3 \\
\end{array}$ &
$\left[
\begin{array}{ccc}
0 & 1 & 0 \\
1 & 0 & 0 \\
1 & 0 & 0
\end{array}
\right]$ & 
$\begin{array}{rl}
1 & (1,1,1)\\
-1 & (-1,1,1)\\
0 & (0,0,1) 
\end{array}$\\
\hline 
C& 
\begin{tikzpicture}
 [scale=.15,auto=left, node distance=1.5cm, every node/.style={circle,draw}]
 \node[fill=white] (n1) at (4,0) {\small{1}};
  \node[fill=white] (n2) at (24,0) {\small{2}};
 \node[fill=white] (n3) at (14,9)  {\small{3}};
 \draw[->, thick] (n1) edge[loop above] (n1); 
 \draw[->, thick] (n1) edge  [bend left=-10] (n2); 
 \draw[->, thick] (n1) edge  [bend left=-10] (n3); 
 \end{tikzpicture} & 
 $\begin{array}{l}
\Delta_1 \\
\Delta_2 \\
\Delta_3 \\
\end{array}$ &
$\left[ 
\begin{array}{ccc}
1 & 0 & 0 \\
1 & 0 & 0 \\
1 & 0 & 0
\end{array}
\right]$ & 
$\begin{array}{rl}
1 & (1,1,1)\\
0 & (0,1,0)\\
0 & (0,0,1) 
\end{array}$
&
D & 
\begin{tikzpicture}
 [scale=.15,auto=left, node distance=1.5cm, every node/.style={circle,draw}]
 \node[fill=white] (n1) at (4,0) {\small{1}};
  \node[fill=white] (n2) at (24,0) {\small{2}};
 \node[fill=white] (n3) at (14,9)  {\small{3}};
 \draw[->, thick] (n1) edge[loop above] (n1); 
 \draw[->, thick] (n1) edge  [bend left=-10] (n2); 
 \draw[->, thick] (n2) edge  [bend left=-10] (n3); 
 \end{tikzpicture} &
 $\begin{array}{l}
\Delta_3 
\end{array}$ &
$\left[
\begin{array}{ccc}
1 & 0 & 0 \\
1 & 0 & 0 \\
0 & 1 & 0
\end{array}
\right]$ & 
$\begin{array}{rl}
1 & (1,1,1)\\
0^* & (0,0,1)  
\end{array}$\\
\hline 
\end{tabular}}
}
\end{center}
\caption{Three-cell connected networks with one asymmetric input, 
up to re-enumeration of the cells. Here $\psi =e^{i2\pi/3}$ and 
$\Delta_l = \{ x:\, x_j = x_k \mbox{ for } j,k \not= l\}$, where $1\leq l \leq 3$. 
Eigenvalues having algebraic multiplicity two and geometric multiplicity one are marked with a star $*$.} 
\label{tab:val13cell}
\end{table}

\subsection{Lattice structures}

We characterize now, for the three-cell networks with asymmetric inputs,  the lattice of synchrony subspaces.
Here, the bottom element is the full-synchrony subspace and the top element is the network phase space. Whenever there are synchrony subspaces with dimension two, they correspond to the middle elements. Each lattice element is labelled with the eigenvalues of the corresponding quotient network. So the bottom element is labelled with the valency eigenvalue $\upsilon$.

Consider a  connected network ${\mathcal{N}}$ with three-cells and asymmetric inputs, a generic coupled cell system $f^{\mathcal{N}}$ and the corresponding Jacobian $J^{\mathcal{N}}_f$ (at the origin). Denote by  $m_a(\upsilon)$ the algebraic multiplicity of the eigenvalue  $\upsilon$  corresponding to  the constant row-sum or valency of $J^{\mathcal{N}}_f$.  
It follows from  \cite[Proposition 5.6]{ADS19} that $\upsilon$ is a semisimple eigenvalue and $m_a(\upsilon)$ is equal to the number of source components in the network. 
Thus $m_a(\upsilon)<3$, otherwise the network has three source components and it is disconnected.
We have then that there exists at least one more eigenvalue which we denote by $\mu$. 
In the next result, we refer to the eigenvalues of $J^{\mathcal{N}}_f$ for generic $f$ as $\upsilon, \mu$ in case there are only two distinct eigenvalues, and $\upsilon, \mu_1, \mu_2$ in case there are three distinct eigenvalues.

\begin{rem}\label{rem:exisysub}
Let ${\mathcal{N}}$ be a network with $n$ cells and $\mu$ a semisimple eigenvalue such that 
$\R^n = \Delta \oplus E_{\mu}$ 
for some network synchrony subspace $\Delta$.
Given a polydiagonal subspace $\Delta'$ containing $\Delta$, i.e., $\Delta\subset \Delta'$, 
trivially,
there are $v_1,\dots,v_m\in E_{\mu}$ such that we have $\Delta'= \Delta \oplus <v_1,\dots,v_m>$. Hence, 
as $\Delta'$ is invariant for any $J_f^{\mathcal{N}}$ and it is a polydiagonal, we conclude that $\Delta'$  is also
a network synchrony subspace.
\hfill $\Diamond$
\end{rem}

\begin{thm}\label{thm:lala}
The possible annotated synchrony lattice structures for connected three-cell networks  with asymmetric inputs are presented in Figure~\ref{Parte1}.
\end{thm}

{\begin{figure}
\begin{subfigure}{.4\textwidth}
\center
\begin{tikzpicture}[every node/.style={scale=0.7},node distance=0.4cm, baseline=(current  bounding  box.center)]
\node (n1) [ellipse,draw]  {\phantom{00}$\upsilon$\phantom{00}};
\node (n2) [ellipse,draw]  [above =of n1] {\phantom{0}$\upsilon, \mu$\phantom{0}};
\node (n5) [ellipse,draw]  [above=of n2] {$\upsilon, \upsilon,\mu$};

\draw (n1) to (n2);
\draw (n2) to (n5);
\end{tikzpicture}
\caption{$m_a(\upsilon)=2$ and $m_a(\mu) = 1$ }\label{lat:C2L1v}
\end{subfigure}
\begin{subfigure}{.4\textwidth}
\center
\begin{tikzpicture}[every node/.style={scale=0.7},node distance=0.4cm, baseline=(current  bounding  box.center)]
\node (n1) [ellipse,draw]  {\phantom{00}$\upsilon$\phantom{00}};
\node (n2) [ellipse,draw]  [above =of n1] {\phantom{0}$\upsilon, \mu$\phantom{0}};
\node (n3) [ellipse,draw]  [right =of n2] {\phantom{0}$\upsilon, \mu$\phantom{0}};
\node (n4) [ellipse,draw]  [left=of n2] {\phantom{0}$\upsilon, \mu$\phantom{0}};
\node (n5) [ellipse,draw]  [above=of n2] {$\upsilon, \mu,\mu$};

\draw (n1) to (n2);
\draw (n1) to  (n3);
\draw (n1) to  (n4);
\draw (n2) to  (n5);
\draw (n3) to  (n5);
\draw (n4) to  (n5);
\end{tikzpicture}
\caption{$m_a(\upsilon)=1$ and $m_g(\mu)=2$}\label{lat:C2L3s}
\end{subfigure}
\rule{0.8\textwidth}{0.4pt}
\begin{subfigure}{.4\textwidth}
\center
\begin{tikzpicture}[every node/.style={scale=0.7},node distance=0.4cm, baseline=(current  bounding  box.center)]
\node (n1) [ellipse,draw]  {\phantom{00}$\upsilon$\phantom{00}};
\node (n2) [ellipse,draw]  [above =of n1] {\phantom{0}$\upsilon, \mu$\phantom{0}};
\node (n5) [ellipse,draw]  [above=of n2] {\phantom{0}$\upsilon, \mu^*$\!\phantom{0}};

\draw (n1) to (n2);
\draw (n2) to  (n5);
\end{tikzpicture}
\caption{ } \label{lat:C2L1d}
\end{subfigure}
\begin{subfigure}{.4\textwidth}
\center
\begin{tikzpicture}[every node/.style={scale=0.7},node distance=0.4cm, baseline=(current  bounding  box.center)]
\node (n1) [ellipse,draw]  {\phantom{00}$\upsilon$\phantom{00}};
\node (n) [ellipse]  [above =of n1] {\phantom{00000}};
\node (n2) [ellipse,draw]  [above =of n] {$\upsilon, \mu^*$};
\draw (n1) to (n2);
\end{tikzpicture} 
\caption{ } \label{lat:C2L0d}
\end{subfigure}
\caption*{$m_a(\upsilon)=1$, $m_a(\mu) = 2$ and $m_g(\mu) =1$}
\rule{0.8\textwidth}{0.4pt}
\begin{subfigure}{.3\textwidth}
\center
\begin{tikzpicture}[every node/.style={scale=0.7},node distance=0.4cm, baseline=(current  bounding  box.center)]
\node (n1) [ellipse,draw]  {\phantom{00}$\upsilon$\phantom{00}};
\node (n) [ellipse]  [above =of n1] {\phantom{00000}};
\node (n3) [ellipse,draw]  [above=of n1, xshift=-2cm] {\phantom{0}$\upsilon, \mu_1$\phantom{0}};
\node (n4) [ellipse,draw]  [above=of n1, xshift=2cm] {\phantom{0}$\upsilon, \mu_2$\phantom{0}};
\node (n5) [ellipse,draw]  [above=of n,] {$\upsilon, \mu_1,\mu_2$};

\draw (n1) to  (n3);
\draw (n1) to  (n4);
\draw (n3) to  (n5);
\draw (n4) to  (n5);
\end{tikzpicture}
\caption{   }\label{lat:C3L2}
\end{subfigure}
\begin{subfigure}{.3\textwidth}
\center
\begin{tikzpicture}[every node/.style={scale=0.7},node distance=0.4cm, baseline=(current  bounding  box.center)]
\node (n5) [ellipse,draw]   {$\upsilon, \mu_1,\mu_2$};
\node (n2) [ellipse,draw]  [below =of n5] {\phantom{0}$\upsilon, \mu_1$\phantom{0}};
\node (n1) [ellipse,draw]  [below=of n2] {\phantom{00}$\upsilon$\phantom{00}};
\draw (n1) to (n2);
\draw (n2) to  (n5);
\end{tikzpicture}
\caption{   } \label{lat:C3L1}
\end{subfigure}
\begin{subfigure}{.3\textwidth}
\center
\begin{tikzpicture}[every node/.style={scale=0.7},node distance=0.4cm, baseline=(current  bounding  box.center)]
\node (n1) [ellipse,draw]  {\phantom{00}$\upsilon$\phantom{00}};
\node (n) [ellipse]  [above =of n1] {\phantom{00000}};
\node (n2) [ellipse,draw]  [above =of n] {$\upsilon, \mu_1,\mu_2$};
\draw (n1) to (n2);
\end{tikzpicture}
\caption{   }\label{lat:C3L0}
\end{subfigure}
\caption*{$m_a(\upsilon)=1$ and $m_a(\mu_1)=m_a(\mu_2)=1$}

\caption{Annotated synchrony lattice structures for connected three-cell networks with asymmetric inputs where $\upsilon$ denotes the valency eigenvalue and $\mu,\mu_1, \mu_2$ denote the other network eigenvalues. The defective eigenvalues are marked as $\mu^*$.} \label{Parte1}

\end{figure}}

\begin{proof}
Consider a connected network ${\mathcal{N}}$ with three-cells, a generic coupled cell system $f^{\mathcal{N}}$ and the corresponding Jacobian $J^{\mathcal{N}}_f$ (at the origin). \\
(i) Suppose that $m_a(\upsilon)=2$. Then the eigenvalue $\mu$ has multiplicity one and the two source components of the network have one cell each.
Otherwise, the network will be disconnected. Note that if $i,j$ are the cells of the two source components then the polydiagonal defined by the equality $x_i = x_j$  is a two-dimensional network synchrony space. Moreover, the two-cell quotient associated to that  synchrony subspace with dimension two has the eigenvalues $\upsilon$ and $\mu$, because it is connected. If there were more than one synchrony subspaces with dimension two, then there were at least two linear independent eigenvectors of $\mu$. But, that is not possible, since the algebraic multiplicity of the eigenvalue $\mu$ is one. Therefore, there is only one two-dimensional synchrony subspace and  the lattice structure must be the one in Figure~\ref{lat:C2L1v}. \\
(ii) Suppose that $\upsilon$ and $\mu$ are the unique eigenvalues of $J^{\mathcal{N}}_f$, where $m_a(\upsilon)=1$ and $\mu$ has geometric multiplicity two.
Then $\R^3=\Delta_0 \oplus E_{\mu}$ and by Remark~\ref{rem:exisysub}, every polydiagonal subspace of $\R^3$ is a synchrony subspace of ${\mathcal{N}}$.  
Thus the lattice structure must be the one in Figure~\ref{lat:C2L3s}. \\
(iii) Suppose that $\upsilon$ and $\mu$ are the unique eigenvalues,  $m_a(\upsilon)=1$  and $\mu$ has geometric multiplicity one.
Again, we know that any synchrony subspace with dimension two has the eigenvalues $\upsilon$ and $\mu$. If there were two or more synchrony subspace with dimension two, we would obtain at least two linear independent eigenvectors associated with $\mu$, a contradiction.  
Therefore, the lattice structure must be one of the following two in Figures~\ref{lat:C2L1d}-\ref{lat:C2L0d}, where the defective eigenvalue is marked with a star $^*$. \\
(iv) Suppose that the network has three distinct eigenvalues,  $\upsilon$, $\mu_1$ and $\mu_2$, with multiplicity one. 
The quotient network associated to any two dimensional synchrony subspace has the eigenvalue $\upsilon$ and $\mu_i$ for some $i=1,2$.
Since the eigenvalues $\mu_1$ and $\mu_2$ have multiplicity one, two-dimensional synchrony subspaces can not have the same eigenvalues.
Thus we have three cases depending on the number of two-dimensional synchrony subspaces, see Figures~\ref{lat:C3L2}-\ref{lat:C3L0}. 
\end{proof}

\subsection{Bifurcation diagrams}

The results obtained in Section~\ref{sec:ssbresults}  combined with the possible synchrony lattice structures described in Theorem~\ref{thm:lala} above 
are now applied to connected networks with three-cells and asymmetric inputs. 
Concretely, we take each of the synchrony lattice structures and prove which network synchrony subspaces support a bifurcation branch of steady-state solutions when a network eigenvalue crosses zero. 
This information is collected into a bifurcation diagram. 
Remarkably, we get that each of the synchrony lattice structures presented in Theorem~\ref{thm:lala} has a distinct bifurcation diagram. 

\begin{thm}\label{thm:bif_diag} Let ${\mathcal{N}}$ be a connected three-cell network  with asymmetric inputs and take its associated annotated synchrony lattice structure $L$ which has to be one of the seven lattice structures in Figure~\ref{Parte1}. 
Then the structure of the bifurcation diagram of ${\mathcal{N}}$ is the one indicated in Figure~\ref{bd} for the lattice structure $L$.
\end{thm}

\begin{figure}
\begin{subfigure}{.4\textwidth}
\center 
\begin{tikzpicture}
 \draw (0,0) --   (3.5,0);

\draw (1,-0.05) -- (1,0.05); 
\draw (2.5,-0.05) -- (2.5,0.05); 

\node (bc1) at (1,-0.5) {$\upsilon=0$};
\node (bc2) at (2.5,-0.5) {$\mu=0$};

\node (bs1) at (1.8,1) {$\Delta_0$};
\draw[domain=0:1, smooth, variable=\y, black] plot ({1+\y*\y}, {0.8*\y});

\node (bs12) at (1.8,2) {$\R^3$};
\draw[domain=0:1, smooth, variable=\y, black] plot ({1+\y*\y*\y}, {1.8*\y});

\node (a) at (3.3,3) {$$};

\node (bs2) at (3.3,1) {$\Delta_1$};
\draw[domain=0:1, smooth, variable=\y, black] plot ({2.5+\y*\y}, {0.8*\y});
\end{tikzpicture}
\caption{ For networks with  lattice structure Figure~\ref{lat:C2L1v}.}\label{lat:bdC2L1v}
\end{subfigure}
\begin{subfigure}{.4\textwidth}
\center 
\begin{tikzpicture}
\draw (0,0) --   (3.5,0);

\draw (1,-0.05) -- (1,0.05); 
\draw (2.5,-0.05) -- (2.5,0.05); 

\node (bc1) at (1,-0.5) {$\upsilon=0$};

\node (bs1) at (1.8,1) {$\Delta_0$};
\draw[domain=0:1, smooth, variable=\y, black] plot ({1+\y*\y}, {0.8*\y});

\node (bc2) at (2.5,-0.5) {$\mu=0$};

\node (bs2) at (3.3,1) {$\Delta_1$};
\draw[domain=0:1, smooth, variable=\y, black] plot ({2.5+\y*\y*\y*\y}, {0.8*\y});

\node (bs12) at (3.3,2) {$\Delta_2$};
\draw[domain=0:1, smooth, variable=\y, black] plot ({2.5+\y*\y*\y}, {1.8*\y});

\node (bs12) at (3.3,3) {$\Delta_3$};
\draw[domain=0:1, smooth, variable=\y, black] plot ({2.5+\y*\y*\y*\y}, {2.8*\y});
\end{tikzpicture}
\caption{For networks with lattice structure Figure~\ref{lat:C2L3s} (if Assumptions~\ref{asps:LSred2} hold).}\label{lat:bdC2L3s}
\end{subfigure}

\begin{subfigure}{.4\textwidth}
\center
\begin{tikzpicture}
\draw (0,0) --   (3.5,0);

\draw (1,-0.05) -- (1,0.05); 
\draw (2.5,-0.05) -- (2.5,0.05); 

\node (bc1) at (1,-0.5) {$\upsilon=0$};

\node (bs1) at (1.8,1) {$\Delta_0$};
\draw[domain=0:1, smooth, variable=\y, black] plot ({1+\y*\y}, {0.8*\y});

\node (bc2) at (2.5,-0.5) {$\mu=0$};

\node (bs2) at (3.3,1) {$\Delta_1$};
\draw[domain=0:1, smooth, variable=\y, black] plot ({2.5+\y*\y*\y*\y}, {0.8*\y});

\node (bs12) at (3.3,2) {$\R^3$};
\draw[domain=0:1, smooth, variable=\y, black] plot ({2.5+\y*\y*\y}, {1.8*\y});
\end{tikzpicture}
\caption{For networks with lattice structure Figure~\ref{lat:C2L1d} (if condition~(\ref{eq:2detdef}) holds).} \label{lat:bdC2L1d}
\end{subfigure}
\begin{subfigure}{.4\textwidth}
\center
\begin{tikzpicture}
\draw (0,0) --   (3.5,0);

\draw (1,-0.05) -- (1,0.05); 
\draw (2.5,-0.05) -- (2.5,0.05); 

\node (bc1) at (1,-0.5) {$\upsilon=0$};

\node (bs1) at (1.8,1) {$\Delta_0$};
\draw[domain=0:1, smooth, variable=\y, black] plot ({1+\y*\y}, {0.8*\y});

\node (bc2) at (2.5,-0.5) {$\mu=0$};
\node (a) at (3.3,2) {$$};

\node (bs2) at (3.3,1) {$\R^3$};
\draw[domain=0:1, smooth, variable=\y, black] plot ({2.5+\y*\y}, {0.8*\y});
\end{tikzpicture}
\caption{For networks with lattice structure Figure~\ref{lat:C2L0d}. }\label{lat:bdC2L0d}
\end{subfigure}

\begin{subfigure}{.4\textwidth}
\center
\begin{tikzpicture}
 \draw (0,0) --   (5,0);

\draw (1,-0.05) -- (1,0.05); 
\draw (2.5,-0.05) -- (2.5,0.05); 
\draw (4,-0.05) -- (4,0.05); 

\node (bc1) at (1,-0.5) {$\upsilon=0$};
\node (bc2) at (2.5,-0.5) {$\mu_1=0$};
\node (bc3) at (4,-0.5) {$\mu_2=0$};

\node (bs1) at (1.8,1) {$\Delta_0$};
\node (bs2) at (3.3,1) {$\Delta_1$};
\node (bs3) at (4.8,1) {$\Delta_2$};

\node (f) at (4.8,1.5) {$$};
\draw[domain=0:1, smooth, variable=\y, black] plot ({1+\y*\y}, {0.8*\y});
\draw[domain=0:1, smooth, variable=\y, black] plot ({2.5+\y*\y}, {0.8*\y});
\draw[domain=0:1, smooth, variable=\y, black] plot ({4+\y*\y}, {0.8*\y});
\end{tikzpicture}
\caption{ For networks with lattice structure Figure~\ref{lat:C3L2}.}\label{lat:bdC3L2}
\end{subfigure}
\begin{subfigure}{.4\textwidth}
\center
\begin{tikzpicture}
 \draw (0,0) --   (5,0);

\draw (1,-0.05) -- (1,0.05); 
\draw (2.5,-0.05) -- (2.5,0.05); 
\draw (4,-0.05) -- (4,0.05); 

\node (bc1) at (1,-0.5) {$\upsilon=0$};
\node (bc2) at (2.5,-0.5) {$\mu_1=0$};
\node (bc3) at (4,-0.5) {$\mu_2=0$};

\node (bs1) at (1.8,1) {$\Delta_0$};
\node (bs2) at (3.3,1) {$\Delta_1$};
\node (bs3) at (4.8,1) {$\R^3$};
\node (f) at (4.8,1.5) {$$};
\draw[domain=0:1, smooth, variable=\y, black] plot ({1+\y*\y}, {0.8*\y});
\draw[domain=0:1, smooth, variable=\y, black] plot ({2.5+\y*\y}, {0.8*\y});
\draw[domain=0:1, smooth, variable=\y, black] plot ({4+\y*\y}, {0.8*\y});
 
\end{tikzpicture}
\caption{ For networks with lattice structure  Figure~\ref{lat:C3L1}.} \label{lat:bdC3L1}
\end{subfigure}

\begin{subfigure}{.4\textwidth}
\center
\begin{tikzpicture}
 \draw (0,0) --   (2,0);

\draw (1,-0.05) -- (1,0.05); 

\node (bc1) at (1,-0.5) {\small$\upsilon=0$};

\node (bs1) at (1.8,1) {$\Delta_0$};

\node (f) at (1.8,1.5) {$$};
\draw[domain=0:1, smooth, variable=\y, black] plot ({1+\y*\y}, {0.8*\y}); 
\end{tikzpicture}
\caption{ For networks with lattice structure Figure~\ref{lat:C3L0} when $\mu_1=\overline{\mu_2}, \mu_2$ are nonreal.}\label{lat:bdC3L0a}
\end{subfigure}
\begin{subfigure}{.4\textwidth}
\center
\begin{tikzpicture}
 \draw (0,0) --   (5,0);

\draw (1,-0.05) -- (1,0.05); 
\draw (2.5,-0.05) -- (2.5,0.05); 
\draw (4,-0.05) -- (4,0.05); 

\node (bc1) at (1,-0.5) {\small$\upsilon=0$};
\node (bc2) at (2.5,-0.5) {\small$\mu_1=0$};
\node (bc3) at (4,-0.5) {\small$\mu_2=0$};

\node (bs1) at (1.8,1) {$\Delta_0$};
\node (bs2) at (3.3,1) {$\R^3$};
\node (bs3) at (4.8,1) {$\R^3$};
\node (f) at (4.8,1.5) {$$};
\draw[domain=0:1, smooth, variable=\y, black] plot ({1+\y*\y}, {0.8*\y});
\draw[domain=0:1, smooth, variable=\y, black] plot ({2.5+\y*\y}, {0.8*\y});
\draw[domain=0:1, smooth, variable=\y, black] plot ({4+\y*\y}, {0.8*\y});
 
\end{tikzpicture}
\caption{ For networks with lattice structure Figure~\ref{lat:C3L0} when $\mu_1,\mu_2$ are real and distinct.} \label{lat:bdC3L0b}
\end{subfigure}

\caption{Bifurcation diagrams displaying the synchrony of bifurcation branches of steady-state solutions emerging from bifurcation problems with the mention bifurcation condition. Here, $\Delta_0$ denotes the full-synchrony subspace and $\Delta_1$, $\Delta_2$, $\Delta_3$ denote two-dimensional synchrony subspaces. Also, $\upsilon$ is the valency eigenvalue, and $\mu$, $\mu_1$, $\mu_2$ are other network eigenvalues. It does not display the stability, growth-rate nor the number of branches.}
\label{bd}
\end{figure}

\begin{proof}
We start by noticing that  the full synchrony subspace $\Delta_0$ is $\upsilon$-simple and $\upsilon$-maximal and any two-dimensional synchrony subspace in Figure~\ref{Parte1} is also $\mu$-simple and $\mu$-maximal. Applying Theorem~\ref{thm:LSred1}, we conclude that the full synchrony subspace $\Delta_0$ and any two-dimensional synchrony subspace support a bifurcation branch for a bifurcation problem given by the condition $\upsilon=0$ and $\mu=0$, respectively.

If a synchrony space $\Delta$ is $\mu$-simple and $\mu$-submaximal, then there is no bifurcation branch with synchrony $\Delta$ for any generic $f\in\mathcal{V}_{\mu}({\mathcal{N}})$, as the dimension of the center subspace does not increase from the $\mu$-maximal subspace. \\
(i) For the lattice in Figure~\ref{lat:C2L1v}, we see that there is one two-dimensional synchrony space and that the space $\R^3$ is valency synchrony-breaking. 
By Proposition~\ref{prop:valsynbre}, the network phase space
$\R^3$ supports a bifurcation branch for bifurcation problems given by condition $\upsilon=0$. 
Thus bifurcation problems given by the valency have a bifurcation branch with synchrony $\Delta_0$ and another with synchrony $\R^3$.
In \cite{ADS19}, it is proven that there are two bifurcation branches with synchrony $\R^3$.
However we are only interested in the synchrony of the branches and we only draw one branch with synchrony $\R^3$ in the diagram.
For the second condition $\mu=0$, we know that the two-dimensional synchrony space supports a bifurcation branch for a bifurcation problem given by that condition.
This means that there are two and one types of synchrony emerging for bifurcation problems given by the condition $\upsilon=0$ and $\mu=0$, respectively.
Hence the diagram bifurcation is given in Figure~\ref{lat:bdC2L1v}.\\
%%%%
%%%%
(ii) For the lattice structure in Figure~\ref{lat:C2L3s}, there are three two-dimensional synchrony spaces, and the space $\R^3$ is $\mu$-semisimple with multiplicity $2$ and $\mu$-submaximal with order $3$.
Note that $2^2-1=3$ and 
so Theorem~\ref{thm:LSred2} can be applied to $\R^3$.
If the  Assumptions~\ref{asps:LSred2} hold, we conclude that $\R^3$ does not support a bifurcation branch. 
Since the two-dimensional synchrony spaces support a bifurcation branch, we have bifurcation branches with synchronies $\Delta_1$, $\Delta_2$ and $\Delta_3$ for bifurcation problems given by the condition $\mu=0$.
Moreover, a bifurcation problem given by the valency, that is, $\upsilon=0$, has a bifurcation branch with synchrony $\Delta_0$.
Thus,  there are three (resp. one) types of synchrony branches of steady-state solutions emerging when the bifurcation condition imposed is $\mu=0$ (resp. $\upsilon=0$)  and the bifurcation diagram  is given in Figure~\ref{lat:bdC2L3s}.
\\
%%%%
%%%%
(iii) Taking now the lattice structure in Figure~\ref{lat:C2L1d}, there is one two-dimensional synchrony space, and the space $\R^3$ is $\mu$-defective with multiplicity $(1,2)$ and $\mu$-submaximal with order $1$. Assuming that condition~(\ref{eq:2detdef}) holds, from Theorem~\ref{thm:LSred3} it follows that 
 $\R^3$ supports a bifurcation branch of steady-state solutions under the bifurcation condition $\mu=0$.
As the two-dimensional synchrony spaces also support a bifurcation branch of steady-state solutions under the bifurcation condition $\mu=0$, we have that 
there are two types of synchrony branches of steady-state solutions emerging when $\mu=0$. Under the valency bifurcation condition $\upsilon=0$, there is a bifurcation branch of steady-state solutions with synchrony $\Delta_0$ and the diagram bifurcation is so given in Figure~\ref{lat:bdC2L1d}.
\\ 
%%%%
%%%%
(iv) We consider now the synchrony lattice structure given in  Figure~\ref{lat:C2L0d}, where 
there is no two-dimensional synchrony space, and the space $\R^3$ is $\mu$-defective with multiplicity $(1,2)$ and $\mu$-maximal.
It follows from Remark~\ref{latfigura60} that the space $\R^3$ supports a bifurcation branch of steady-state solutions under the bifurcation condition $\mu=0$.
Moreover,  $\Delta_0$ supports a bifurcation branch of steady-state solutions under the bifurcation condition  $\upsilon=0$. See the 
bifurcation diagram in Figure~\ref{lat:bdC2L0d}.\\
%%%
%%%
(v) In the synchrony lattice structure of Figure~\ref{lat:C3L2}, there are two two-dimensional synchrony spaces, $\Delta_1$ which is $\mu_1$-simple and $\mu_1$-maximal and $\Delta_2$ which is $\mu_2$-simple and $\mu_2$-maximal and the space $\R^3$ is simple and submaximal. Thus $\Delta_0$, $\Delta_1$ and $\Delta_2$ support a bifurcation branch for bifurcation problems given by the conditions $\upsilon=0$, $\mu_1=0$ and $\mu_2=0$, respectively. 
Moreover, the space $\R^3$ does not support a bifurcation branch. We obtain the bifurcation diagram in Figure~\ref{lat:bdC3L2}. 
 \\
%%%%
%%%%
(vi) In the synchrony lattice structure of Figure~\ref{lat:C3L1}, the two-dimensional synchrony space $\Delta_1$ is $\mu_1$-simple and $\mu_1$-maximal 
and the space $\R^3$ is $\mu_2$-simple and $\mu_2$-maximal. Hence, from Theorem~\ref{thm:LSred1} applied to the synchrony spaces $\Delta_0$, $\Delta_1$ and $\R^3$, 
each synchrony space supports a bifurcation branch of steady-state solutions under the corresponding bifurcation condition and the bifurcation diagram is given in 
Figure~\ref{lat:bdC3L1}. \\
%%%%
%%%%
(vii) Last, we study the annotated lattice given in Figure~\ref{lat:C3L0}.
In this case, the network eigenvalues can be complex conjugated and this leads to two different bifurcation diagrams.
If the network eigenvalues $\mu_1$ and $\mu_2$ are conjugate complex numbers, $\mu_1=\overline{\mu_2}$, then only the full synchrony subspace $\Delta_0$ supports a bifurcation branch of steady-state solutions under bifurcation condition $\upsilon=0$ and we get the bifurcation diagram in Figure~\ref{lat:bdC3L0a}.
If the $\mu_1$ and $\mu_2$ are real and distinct, then the space $\R^3$ is $\mu_1$-simple and $\mu_1$-maximal and it is also $\mu_2$-simple and $\mu_2$-maximal.
Now, we can apply Theorem~\ref{thm:LSred1} to $\R^3$ for the two bifurcation conditions $\mu_1=0$ and $\mu_2=0$.
Thus the space $\R^3$ supports a bifurcation branch for bifurcation problems given by $\mu_1=0$ and $\mu_2=0$ and the diagram bifurcation is given in 
Figure~\ref{lat:bdC3L0b}.
\end{proof}

\section{Case study: three-cell networks with one, two or six asymmetric inputs}\label{subsec_case_study}

The minimal connected three-cell networks with one, two and six asymmetric inputs are enumerated, up to ODE-equivalence, in \cite{ADS20}.
Moreover, it is proved that any three-cell network with $k$-asymmetric inputs , $k \in \NN$, is ODE-equivalent to a minimal three-cell network with at most six asymmetric inputs. 
In this section, for each such  three-cell connected network, we study which synchrony subspaces support a synchrony-breaking bifurcation branch of steady-state 
solutions. 

We start by recalling the classification given in \cite{ADS20} of the minimal connected three-cell networks with one, two and six asymmetric inputs. 

\begin{thm}[\cite{ADS20}]\label{thm:classification}
There are exactly:\\
(i) Four ODE-classes of connected three-cell networks with one asymmetric input, see the minimal representatives  in Table~\ref{tab:val13cell}.\\
(ii) Forty eight ODE-classes of connected three-cell networks with two asymmetric inputs, see the minimal representatives in Tables~\ref{tab:C3L0RI1.tex}-\ref{tab:C2L3.tex}. The networks in Tables~\ref{tab:C3L0RI1.tex}-\ref{tab:C3L0RI2.tex} have no  two-dimensional synchrony subspaces; the networks in Tables~\ref{tab:C3L1.tex}-\ref{tab:C3L12.tex} have exactly one two-dimensional synchrony subspace; the networks in Table~\ref{tab:C3L2.tex} have exactly two two-dimensional synchrony subspaces; the network in Table~\ref{tab:C2L3.tex} has three two-dimensional synchrony subspaces.\\
(iii) One ODE-class of connected three-cell networks with six asymmetric inputs, with minimal representative in Figure~\ref{fig:rep_min} and it has no two-dimensional synchrony spaces. 
\end{thm}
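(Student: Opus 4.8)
The plan is to recapitulate the enumeration of \cite{ADS20}, which itself relies on two ingredients: the linear-equivalence criterion of Dias and Stewart \cite{DS05}, by which two $n$-cell networks of identical cells with asymmetric inputs are ODE-equivalent precisely when their adjacency matrices together with $\mathrm{Id}_n$ span the same linear subspace of the $n\times n$ matrices of constant row sum, and the algebraic characterization of minimal networks of Aguiar and Dias \cite{AD07}, by which the minimal representatives of an ODE-class are those that use the fewest input types. For three cells both steps reduce to a finite computation over the $3^3$ row-stochastic $0/1$ matrices.

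For item~(i) one lists those $3^3$ matrices $A$, quotients by relabeling of cells (conjugation by the six permutation matrices of $S_3$) and by the ODE-equivalence relation on $\mathrm{span}\{\mathrm{Id}_3,A\}$, and deletes the disconnected ones (among them $A=\mathrm{Id}_3$); what remains are the four ODE-classes, which one checks are represented by the networks A, C, D and F of Table~\ref{tab:val13cell}, in agreement with Leite and Golubitsky \cite{LG06}. For item~(ii) one performs the analogous sieve over ordered pairs $(A_1,A_2)$, quotienting in addition by the transposition of the two input types, keeping only the classes whose minimal representative genuinely needs two input types --- that is, those for which $\mathrm{span}\{\mathrm{Id}_3,A_1,A_2\}$ is three-dimensional and the network is not ODE-equivalent to a one-input network --- and retaining the connected ones; this yields the $48$ classes. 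Each of them is then sorted according to how many of the three polydiagonals $\{x_1=x_2\}$, $\{x_1=x_3\}$, $\{x_2=x_3\}$ are left invariant by both $A_1$ and $A_2$ (equivalently, are synchrony subspaces, by \cite[Theorem 6.5]{SGP03}), which splits the $48$ classes into those with no, one, two and three two-dimensional synchrony subspaces, i.e.\ into Tables~\ref{tab:C3L0RI1.tex}--\ref{tab:C3L0RI2.tex}, \ref{tab:C3L1.tex}--\ref{tab:C3L12.tex}, \ref{tab:C3L2.tex} and \ref{tab:C2L3.tex} respectively. For item~(iii) one invokes the general theorem of \cite{ADS20} that an $n$-cell network of identical cells with asymmetric inputs is ODE-equivalent to one with at most $n(n-1)$ asymmetric inputs and that there is a unique ODE-class attaining $n(n-1)$; for $n=3$ this is $6$, the class of Figure~\ref{fig:rep_min}, and because its six adjacency matrices together with $\mathrm{Id}_3$ span the entire space of $3\times 3$ matrices of constant row sum, the only polydiagonals invariant under all of them are $\Delta_0$ and $\R^3$, so there is no two-dimensional synchrony subspace.

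The hard part is the combinatorial bookkeeping behind item~(ii): there are $3^6$ ordered pairs to process, and the delicate point is to avoid overcounting --- one must correctly decide when two pairs of adjacency matrices generate the same subspace, hence ODE-equivalent networks, and, more importantly, when a nominally two-input network collapses under ODE-equivalence to a one-input one and so must be removed from the two-input list. This is exactly where the characterization of minimality from \cite{AD07} does the work and is what pins the number down to $48$. Items~(i) and (iii) are comparatively immediate once the reduction to linear equivalence is available, and the final sorting of the $48$ classes by their two-dimensional synchrony subspaces --- the data that feeds the lattice dichotomy of Theorem~\ref{lala} and hence the bifurcation analysis via Theorem~\ref{thm:classification3} --- is a routine invariance check on at most three polydiagonals per network.
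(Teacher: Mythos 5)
The statement you are proving is not proved in this paper at all: it is quoted verbatim from the authors' earlier classification paper \cite{ADS20}, and the present text simply recalls it before using it. So there is no in-paper argument to compare against; what can be said is whether your outline would stand as a proof on its own. Your identification of the machinery is correct and does match the route of the cited source: ODE-equivalence reduces to linear equivalence of the spans $\mathrm{span}\{\mathrm{Id}_3,A_1,\dots,A_k\}$ by Dias--Stewart \cite{DS05}, minimal representatives are characterized via Aguiar--Dias \cite{AD07}, the whole problem is a finite sieve over $0/1$ row-stochastic matrices modulo $S_3$-conjugation and permutation of input types, and the two-dimensional synchrony subspaces are detected by checking invariance of the three polydiagonals under both adjacency matrices (\cite[Theorem 6.5]{SGP03}). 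Your argument for item (iii) is also complete as stated: uniqueness of the six-input class is the general $n(n-1)$ theorem of \cite{ADS20}, and since the six matrices together with $\mathrm{Id}_3$ span all constant-row-sum matrices, no proper polydiagonal other than $\Delta_0$ can be invariant.

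The gap is that for items (i) and (ii) you describe the computation but do not perform it, and the content of the theorem \emph{is} the outcome of that computation: the numbers $4$ and $48$, and the partition of the $48$ classes into $17$, $23$, $7$ and $1$ according to the number of two-dimensional synchrony subspaces, are asserted ("what remains are the four ODE-classes", "this yields the $48$ classes") rather than derived. A referee reading this as a self-contained proof would have no way to check that the sieve was run correctly, and the delicate steps you yourself flag --- deciding when two pairs $(A_1,A_2)$ generate the same span up to relabeling, and when a nominally two-input network is ODE-equivalent to something with fewer edges --- are exactly where errors of over- or under-counting occur. Note also that minimality in the sense of \cite{AD07} is minimality of the \emph{number of edges} within the entire ODE-class, so ruling out collapse to a one-input network (your three-dimensional-span condition handles this, since a one-input span has dimension at most two) is necessary but not obviously sufficient; one must invoke the full characterization of \cite{AD07} to conclude that the two-input representatives are genuinely minimal. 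As it stands your text is an accurate summary of how \cite{ADS20} proceeds, which is all the present paper itself offers, but it is not a proof of the classification.
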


Table~\ref{tab:val13cell} includes the two-dimensional synchrony subspaces of each connected three-cell network with one asymmetric input in Table~\ref{tab:val13cell}. 

\begin{table}
\resizebox{1 \textwidth}{!}{ 
    {\tiny 
 \begin{tabular}{|c|c|c|c|}
  \hline

 $E_6 \& B_1$ 
\begin{tikzpicture}
  [scale=.15,auto=left, node distance=1.5cm, every node/.style={circle,draw}]
 \node[fill=white] (n1) at (4,0) {\small{1}};
 \node[fill=white] (n2) at (24,0) {\small{2}}; \node[fill=white] (n3) at (14,9)  {\small{3}};
\draw[->, thick] (n1) to  [in=120,out=70,looseness=5]  (n1);
\draw[->>, thick] (n1) to  [in=205,out=155,looseness=5] (n1);
 \draw[->, thick] (n3) to  [in=110,out=160,looseness=5]  (n3);
\draw[->, thick] (n1) edge  [bend left=-10] (n2); 
 \draw[<<-, thick] (n2) edge  [bend left=10] (n3); 
\draw[->>, thick] (n2) edge  [bend left=10] (n3); 
\end{tikzpicture}
 &
$B_1 \& F_2$ 
\begin{tikzpicture}
  [scale=.15,auto=left, node distance=1.5cm, every node/.style={circle,draw}]
 \node[fill=white] (n1) at (4,0) {\small{1}};
 \node[fill=white] (n2) at (24,0) {\small{2}}; \node[fill=white] (n3) at (14,9)  {\small{3}};
 \draw[->, thick] (n1) to  [in=120,out=70,looseness=5] (n1);
\draw[<<-, thick] (n1) edge  [bend left=10] (n2); 
\draw[->>, thick] (n1) edge  [bend left=10] (n2); 
 \draw[<-, thick] (n2) edge  [bend left=-10] (n3); 
 \draw[->, thick] (n2) edge  [bend left=-10] (n3); 
\draw[->>, thick] (n2) edge  [bend left=10] (n3); 
\end{tikzpicture}
&
 $C_1 \& A_2$ 
 \begin{tikzpicture}
  [scale=.15,auto=left, node distance=1.5cm, every node/.style={circle,draw}]
 \node[fill=white] (n1) at (4,0) {\small{1}};
 \node[fill=white] (n2) at (24,0) {\small{2}}; \node[fill=white] (n3) at (14,9)  {\small{3}};
 \draw[->, thick] (n1) to  [in=120,out=70,looseness=5] (n1);
\draw[<<-, thick] (n1) edge  [bend left=10] (n2); 
\draw[->, thick] (n1) edge  [bend left=-10] (n2); 
 \draw[<<-, thick] (n2) edge  [bend left=10] (n3); 
\draw[<-, thick] (n3) edge [bend right=10] (n1); 
\draw[<<-, thick] (n3) edge [bend right=-10] (n1); 
\end{tikzpicture}
 & 
$A_2 \& A_1$ 
\begin{tikzpicture}
  [scale=.15,auto=left, node distance=1.5cm, every node/.style={circle,draw}]
 \node[fill=white] (n1) at (4,0) {\small{1}};
 \node[fill=white] (n2) at (24,0) {\small{2}}; \node[fill=white] (n3) at (14,9)  {\small{3}};
\draw[<-, thick] (n1) edge  [bend left=-10] (n2); 
\draw[->>, thick] (n3) edge [bend right=-10] (n1); 
\draw[->>, thick] (n1) edge  [bend left=10] (n2); 
 \draw[<-, thick] (n2) edge  [bend left=-10] (n3); 
\draw[<-, thick] (n3) edge [bend right=10] (n1); 
\draw[->>, thick] (n2) edge  [bend left=10] (n3); 
\end{tikzpicture}
\\ \hline
 $B_1 \& B_3$ 
 \begin{tikzpicture}
  [scale=.15,auto=left, node distance=1.5cm, every node/.style={circle,draw}]
 \node[fill=white] (n1) at (4,0) {\small{1}};
 \node[fill=white] (n2) at (24,0) {\small{2}}; \node[fill=white] (n3) at (14,9)  {\small{3}};
 \draw[->, thick] (n1) to  [in=120,out=70,looseness=5] (n1);
 \draw[->>, thick] (n3) to  [in=70,out=20,looseness=5] (n3);
\draw[<<-, thick] (n1) edge  [bend left=10] (n2); 
\draw[->>, thick] (n1) edge  [bend left=10] (n2); 
 \draw[<-, thick] (n2) edge  [bend left=-10] (n3); 
\draw[->, thick] (n2) edge  [bend left=-10] (n3); 
\end{tikzpicture}
 \\ \cline{1-1} 
\end{tabular}}
 }
 \caption{Minimal three-cell networks with two asymmetric inputs and no 2D synchrony subspaces.} 
 \label{tab:C3L0RI1.tex}
 \end{table}

\begin{table}
\resizebox{1 \textwidth}{!}{ 
    {\tiny 
 \begin{tabular}{|c|c|c|c|}
 \hline
 $D_1 \& F_3$ 
 \begin{tikzpicture}
  [scale=.15,auto=left, node distance=1.5cm, every node/.style={circle,draw}]
 \node[fill=white] (n1) at (4,0) {\small{1}};
 \node[fill=white] (n2) at (24,0) {\small{2}}; \node[fill=white] (n3) at (14,9)  {\small{3}};
 \draw[->, thick] (n1) to  [in=120,out=70,looseness=5] (n1);
\draw[<<-, thick] (n1) edge  [bend left=10] (n2); 
\draw[->, thick] (n1) edge  [bend left=-10] (n2); 
 \draw[<<-, thick] (n2) edge  [bend left=10] (n3); 
 \draw[->, thick] (n2) edge  [bend left=-10] (n3); 
\draw[->>, thick] (n2) edge  [bend left=10] (n3); 
\end{tikzpicture}
 &
 $D_1 \& D_6$ 
 \begin{tikzpicture}
  [scale=.15,auto=left, node distance=1.5cm, every node/.style={circle,draw}]
 \node[fill=white] (n1) at (4,0) {\small{1}};
 \node[fill=white] (n2) at (24,0) {\small{2}}; \node[fill=white] (n3) at (14,9)  {\small{3}};
 \draw[->, thick] (n1) to  [in=120,out=70,looseness=5] (n1);
 \draw[->>, thick] (n3) to  [in=70,out=20,looseness=5] (n3);
\draw[<<-, thick] (n1) edge  [bend left=10] (n2); 
\draw[->, thick] (n1) edge  [bend left=-10] (n2); 
 \draw[<<-, thick] (n2) edge  [bend left=10] (n3); 
\draw[->, thick] (n2) edge  [bend left=-10] (n3); 
\end{tikzpicture}
 &
 $D_1 \& F_6$ 
 \begin{tikzpicture}
  [scale=.15,auto=left, node distance=1.5cm, every node/.style={circle,draw}]
 \node[fill=white] (n1) at (4,0) {\small{1}};
 \node[fill=white] (n2) at (24,0) {\small{2}}; \node[fill=white] (n3) at (14,9)  {\small{3}};
 \draw[->, thick] (n1) to  [in=120,out=70,looseness=5] (n1);
\draw[->>, thick] (n3) edge [bend right=-10] (n1); 
\draw[->, thick] (n1) edge  [bend left=-10] (n2); 
\draw[->>, thick] (n1) edge  [bend left=10] (n2); 
\draw[<<-, thick] (n3) edge [bend right=-10] (n1); 
\draw[->, thick] (n2) edge  [bend left=-10] (n3); 
\end{tikzpicture}
&
  $D_1 \& A_1$ 
 \begin{tikzpicture}
  [scale=.15,auto=left, node distance=1.5cm, every node/.style={circle,draw}]
 \node[fill=white] (n1) at (4,0) {\small{1}};
 \node[fill=white] (n2) at (24,0) {\small{2}}; \node[fill=white] (n3) at (14,9)  {\small{3}};
 \draw[->, thick] (n1) to  [in=120,out=70,looseness=5] (n1);
\draw[->>, thick] (n3) edge [bend right=-10] (n1); 
\draw[->, thick] (n1) edge  [bend left=-10] (n2); 
\draw[->>, thick] (n1) edge  [bend left=10] (n2); 
 \draw[->, thick] (n2) edge  [bend left=-10] (n3); 
\draw[->>, thick] (n2) edge  [bend left=10] (n3); 
\end{tikzpicture}
 \\ \hline 
 $D_1 \& D_2$ 
 \begin{tikzpicture}
  [scale=.15,auto=left, node distance=1.5cm, every node/.style={circle,draw}]
 \node[fill=white] (n1) at (4,0) {\small{1}};
 \node[fill=white] (n2) at (24,0) {\small{2}}; \node[fill=white] (n3) at (14,9)  {\small{3}};
\draw[->, thick] (n1) to  [in=120,out=70,looseness=5]  (n1);
\draw[->>, thick] (n1) to  [in=205,out=155,looseness=5] (n1);
\draw[->, thick] (n1) edge  [bend left=-10] (n2); 
 \draw[<<-, thick] (n2) edge  [bend left=10] (n3); 
\draw[<<-, thick] (n3) edge [bend right=-10] (n1); 
\draw[->, thick] (n2) edge  [bend left=-10] (n3); 
\end{tikzpicture}
 &  
 $D_1 \& D_5$ 
 \begin{tikzpicture}
  [scale=.15,auto=left, node distance=1.5cm, every node/.style={circle,draw}]
 \node[fill=white] (n1) at (4,0) {\small{1}};
 \node[fill=white] (n2) at (24,0) {\small{2}}; \node[fill=white] (n3) at (14,9)  {\small{3}};
 \draw[->, thick] (n1) to  [in=120,out=70,looseness=5] (n1);
 \draw[->>, thick] (n3) to  [in=70,out=20,looseness=5] (n3);
\draw[->>, thick] (n3) edge [bend right=-10] (n1); 
\draw[->, thick] (n1) edge  [bend left=-10] (n2); 
\draw[->>, thick] (n1) edge  [bend left=10] (n2); 
\draw[->, thick] (n2) edge  [bend left=-10] (n3); 
\end{tikzpicture}
& 
$D_1 \& B_1$ 
\begin{tikzpicture}
  [scale=.15,auto=left, node distance=1.5cm, every node/.style={circle,draw}]
 \node[fill=white] (n1) at (4,0) {\small{1}};
 \node[fill=white] (n2) at (24,0) {\small{2}}; \node[fill=white] (n3) at (14,9)  {\small{3}};
\draw[->, thick] (n1) to  [in=120,out=70,looseness=5]  (n1);
\draw[->>, thick] (n1) to  [in=205,out=155,looseness=5] (n1);
\draw[->, thick] (n1) edge  [bend left=-10] (n2); 
 \draw[<<-, thick] (n2) edge  [bend left=10] (n3); 
 \draw[->, thick] (n2) edge  [bend left=-10] (n3); 
\draw[->>, thick] (n2) edge  [bend left=10] (n3); 
\end{tikzpicture}
&
 $D_1 \& B_2$ 
 \begin{tikzpicture}
  [scale=.15,auto=left, node distance=1.5cm, every node/.style={circle,draw}]
 \node[fill=white] (n1) at (4,0) {\small{1}};
 \node[fill=white] (n2) at (24,0) {\small{2}}; \node[fill=white] (n3) at (14,9)  {\small{3}};
 \draw[->, thick] (n1) to  [in=120,out=70,looseness=5] (n1);
 \draw[->>, thick] (n2) to  [in=70,out=120,looseness=5] (n2);
\draw[->>, thick] (n3) edge [bend right=-10] (n1); 
\draw[->, thick] (n1) edge  [bend left=-10] (n2); 
\draw[<<-, thick] (n3) edge [bend right=-10] (n1); 
\draw[->, thick] (n2) edge  [bend left=-10] (n3); 
\end{tikzpicture}
 \\ \hline   
$D_1 \& E_4$ 
\begin{tikzpicture}
  [scale=.15,auto=left, node distance=1.5cm, every node/.style={circle,draw}]
 \node[fill=white] (n1) at (4,0) {\small{1}};
 \node[fill=white] (n2) at (24,0) {\small{2}}; \node[fill=white] (n3) at (14,9)  {\small{3}};
\draw[->, thick] (n1) to  [in=120,out=70,looseness=5]  (n1);
\draw[->>, thick] (n1) to  [in=205,out=155,looseness=5] (n1);
 \draw[->>, thick] (n3) to  [in=70,out=20,looseness=5] (n3);
\draw[->, thick] (n1) edge  [bend left=-10] (n2); 
 \draw[<<-, thick] (n2) edge  [bend left=10] (n3); 
\draw[->, thick] (n2) edge  [bend left=-10] (n3); 
\end{tikzpicture}
& 
 $E_6 \& A_2$ 
 \begin{tikzpicture}
  [scale=.15,auto=left, node distance=1.5cm, every node/.style={circle,draw}]
 \node[fill=white] (n1) at (4,0) {\small{1}};
 \node[fill=white] (n2) at (24,0) {\small{2}}; \node[fill=white] (n3) at (14,9)  {\small{3}};
 \draw[->, thick] (n1) to  [in=120,out=70,looseness=5] (n1);
 \draw[->, thick] (n3) to  [in=110,out=160,looseness=5]  (n3);
\draw[<<-, thick] (n1) edge  [bend left=10] (n2); 
\draw[->, thick] (n1) edge  [bend left=-10] (n2); 
 \draw[<<-, thick] (n2) edge  [bend left=10] (n3); 
\draw[<<-, thick] (n3) edge [bend right=-10] (n1); 
\end{tikzpicture}
&
$B_1 \& A_2$ 
\begin{tikzpicture}
  [scale=.15,auto=left, node distance=1.5cm, every node/.style={circle,draw}]
 \node[fill=white] (n1) at (4,0) {\small{1}};
 \node[fill=white] (n2) at (24,0) {\small{2}}; \node[fill=white] (n3) at (14,9)  {\small{3}};
 \draw[->, thick] (n1) to  [in=120,out=70,looseness=5] (n1);
\draw[<<-, thick] (n1) edge  [bend left=10] (n2); 
\draw[<-, thick] (n2) edge  [bend left=-10] (n3); 
\draw[<<-, thick] (n2) edge  [bend left=10] (n3); 
\draw[<<-, thick] (n3) edge [bend right=-10] (n1); 
\draw[->, thick] (n2) edge  [bend left=-10] (n3); 
\end{tikzpicture}
&
$F_1 \& A_2$ 
\begin{tikzpicture}
  [scale=.15,auto=left, node distance=1.5cm, every node/.style={circle,draw}]
 \node[fill=white] (n1) at (4,0) {\small{1}};
 \node[fill=white] (n2) at (24,0) {\small{2}}; \node[fill=white] (n3) at (14,9)  {\small{3}};
\draw[<-, thick] (n1) edge  [bend left=-10] (n2); 
 \draw[<<-, thick] (n1) edge  [bend left=10] (n2); 
\draw[->, thick] (n1) edge  [bend left=-10] (n2); 
 \draw[<<-, thick] (n2) edge  [bend left=10] (n3); 
\draw[<-, thick] (n3) edge [bend right=10] (n1); 
\draw[<<-, thick] (n3) edge [bend right=-10] (n1); 
\end{tikzpicture}
\\ \hline 
$D_1 \& A_2$ 
\begin{tikzpicture}
  [scale=.15,auto=left, node distance=1.5cm, every node/.style={circle,draw}]
 \node[fill=white] (n1) at (4,0) {\small{1}};
 \node[fill=white] (n2) at (24,0) {\small{2}}; \node[fill=white] (n3) at (14,9)  {\small{3}};
 \draw[->, thick] (n1) to  [in=120,out=70,looseness=5] (n1);
\draw[<<-, thick] (n1) edge  [bend left=10] (n2); 
\draw[->, thick] (n1) edge  [bend left=-10] (n2); 
 \draw[<<-, thick] (n2) edge  [bend left=10] (n3); 
\draw[<<-, thick] (n3) edge [bend right=-10] (n1); 
\draw[->, thick] (n2) edge  [bend left=-10] (n3); 
\end{tikzpicture}
&
$F_1 \& A_1$ 
\begin{tikzpicture}
  [scale=.15,auto=left, node distance=1.5cm, every node/.style={circle,draw}]
 \node[fill=white] (n1) at (4,0) {\small{1}};
 \node[fill=white] (n2) at (24,0) {\small{2}}; \node[fill=white] (n3) at (14,9)  {\small{3}};
\draw[<-, thick] (n1) edge  [bend left=-10] (n2); 
\draw[->>, thick] (n3) edge [bend right=-10] (n1); 
\draw[->, thick] (n1) edge  [bend left=-10] (n2); 
\draw[->>, thick] (n1) edge  [bend left=10] (n2); 
\draw[<-, thick] (n3) edge [bend right=10] (n1); 
\draw[->>, thick] (n2) edge  [bend left=10] (n3); 
\end{tikzpicture} 
 \\ \cline{1-2}  
\end{tabular}}
 }
 \caption{Minimal three-cell networks with two asymmetric inputs and no 2D synchrony subspaces.} 
 \label{tab:C3L0RI2.tex}
 \end{table}

\begin{table}
\resizebox{1 \textwidth}{!}{ 
    {\tiny 
 \begin{tabular}{|c|c|c|c|}
 \hline
 $D_1 \& E_1$ 
 \begin{tikzpicture}
  [scale=.15,auto=left, node distance=1.5cm, every node/.style={circle,draw}]
 \node[fill=white] (n1) at (4,0) {\small{1}};
 \node[fill=white] (n2) at (24,0) {\small{2}}; \node[fill=white] (n3) at (14,9)  {\small{3}};
\draw[->, thick] (n1) to  [in=120,out=70,looseness=5]  (n1);
\draw[->>, thick] (n1) to  [in=205,out=155,looseness=5] (n1);
 \draw[->>, thick] (n2) to  [in=70,out=120,looseness=5] (n2);
\draw[->, thick] (n1) edge  [bend left=-10] (n2); 
\draw[<<-, thick] (n3) edge [bend right=-10] (n1); 
\draw[->, thick] (n2) edge  [bend left=-10] (n3); 
\end{tikzpicture}
&
$D_1 \& F_1$ 
\begin{tikzpicture}
  [scale=.15,auto=left, node distance=1.5cm, every node/.style={circle,draw}]
 \node[fill=white] (n1) at (4,0) {\small{1}};
 \node[fill=white] (n2) at (24,0) {\small{2}}; \node[fill=white] (n3) at (14,9)  {\small{3}};
 \draw[->, thick] (n1) to  [in=120,out=70,looseness=5] (n1);
\draw[<<-, thick] (n1) edge  [bend left=10] (n2); 
\draw[->, thick] (n1) edge  [bend left=-10] (n2); 
\draw[->>, thick] (n1) edge  [bend left=10] (n2); 
\draw[<<-, thick] (n3) edge [bend right=-10] (n1); 
\draw[->, thick] (n2) edge  [bend left=-10] (n3); 
\end{tikzpicture}
&
 $D_1 \& F_2$ 
 \begin{tikzpicture}
  [scale=.15,auto=left, node distance=1.5cm, every node/.style={circle,draw}]
 \node[fill=white] (n1) at (4,0) {\small{1}};
 \node[fill=white] (n2) at (24,0) {\small{2}}; \node[fill=white] (n3) at (14,9)  {\small{3}};
 \draw[->, thick] (n1) to  [in=120,out=70,looseness=5] (n1);
\draw[<<-, thick] (n1) edge  [bend left=10] (n2); 
\draw[->, thick] (n1) edge  [bend left=-10] (n2); 
\draw[->>, thick] (n1) edge  [bend left=10] (n2); 
 \draw[->, thick] (n2) edge  [bend left=-10] (n3); 
\draw[->>, thick] (n2) edge  [bend left=10] (n3); 
\end{tikzpicture}
&
 $D_1 \& B_3$ 
 \begin{tikzpicture}
  [scale=.15,auto=left, node distance=1.5cm, every node/.style={circle,draw}]
 \node[fill=white] (n1) at (4,0) {\small{1}};
 \node[fill=white] (n2) at (24,0) {\small{2}}; \node[fill=white] (n3) at (14,9)  {\small{3}};
 \draw[->, thick] (n1) to  [in=120,out=70,looseness=5] (n1);
 \draw[->>, thick] (n3) to  [in=70,out=20,looseness=5] (n3);
\draw[<<-, thick] (n1) edge  [bend left=10] (n2); 
\draw[->, thick] (n1) edge  [bend left=-10] (n2); 
\draw[->>, thick] (n1) edge  [bend left=10] (n2); 
\draw[->, thick] (n2) edge  [bend left=-10] (n3); 
\end{tikzpicture}
\\ \hline 
$C_1 \& B_1$ 
\begin{tikzpicture}
  [scale=.15,auto=left, node distance=1.5cm, every node/.style={circle,draw}]
 \node[fill=white] (n1) at (4,0) {\small{1}};
 \node[fill=white] (n2) at (24,0) {\small{2}}; \node[fill=white] (n3) at (14,9)  {\small{3}};
\draw[->, thick] (n1) to  [in=120,out=70,looseness=5]  (n1);
\draw[->>, thick] (n1) to  [in=205,out=155,looseness=5] (n1);
\draw[->, thick] (n1) edge  [bend left=-10] (n2); 
 \draw[<<-, thick] (n2) edge  [bend left=10] (n3); 
\draw[<-, thick] (n3) edge [bend right=10] (n1); 
\draw[->>, thick] (n2) edge  [bend left=10] (n3); 
\end{tikzpicture}
 &
 $D_1 \& F_4$ 
 \begin{tikzpicture}
  [scale=.15,auto=left, node distance=1.5cm, every node/.style={circle,draw}]
 \node[fill=white] (n1) at (4,0) {\small{1}};
 \node[fill=white] (n2) at (24,0) {\small{2}}; \node[fill=white] (n3) at (14,9)  {\small{3}};
 \draw[->, thick] (n1) to  [in=120,out=70,looseness=5] (n1);
\draw[->>, thick] (n3) edge [bend right=-10] (n1); 
\draw[->, thick] (n1) edge  [bend left=-10] (n2); 
 \draw[<<-, thick] (n2) edge  [bend left=10] (n3); 
 \draw[->, thick] (n2) edge  [bend left=-10] (n3); 
\draw[->>, thick] (n2) edge  [bend left=10] (n3); 
\end{tikzpicture}
& 
$C_1 \& B_3$ 
\begin{tikzpicture}
  [scale=.15,auto=left, node distance=1.5cm, every node/.style={circle,draw}]
 \node[fill=white] (n1) at (4,0) {\small{1}};
 \node[fill=white] (n2) at (24,0) {\small{2}}; \node[fill=white] (n3) at (14,9)  {\small{3}};
 \draw[->, thick] (n1) to  [in=120,out=70,looseness=5] (n1);
 \draw[->>, thick] (n3) to  [in=70,out=20,looseness=5] (n3);
\draw[<<-, thick] (n1) edge  [bend left=10] (n2); 
\draw[->, thick] (n1) edge  [bend left=-10] (n2); 
\draw[->>, thick] (n1) edge  [bend left=10] (n2); 
\draw[<-, thick] (n3) edge [bend right=10] (n1); 
\end{tikzpicture}
 &
 $E_6 \& F_3$ 
 \begin{tikzpicture}
  [scale=.15,auto=left, node distance=1.5cm, every node/.style={circle,draw}]
 \node[fill=white] (n1) at (4,0) {\small{1}};
 \node[fill=white] (n2) at (24,0) {\small{2}}; \node[fill=white] (n3) at (14,9)  {\small{3}};
 \draw[->, thick] (n1) to  [in=120,out=70,looseness=5] (n1);
 \draw[->, thick] (n3) to  [in=110,out=160,looseness=5]  (n3);
\draw[<<-, thick] (n1) edge  [bend left=10] (n2); 
\draw[->, thick] (n1) edge  [bend left=-10] (n2); 
 \draw[<<-, thick] (n2) edge  [bend left=10] (n3); 
\draw[->>, thick] (n2) edge  [bend left=10] (n3); 
\end{tikzpicture}
\\ \hline 
$D_1 \& E_6$ 
\begin{tikzpicture}
  [scale=.15,auto=left, node distance=1.5cm, every node/.style={circle,draw}]
 \node[fill=white] (n1) at (4,0) {\small{1}};
 \node[fill=white] (n2) at (24,0) {\small{2}}; \node[fill=white] (n3) at (14,9)  {\small{3}};
\draw[->, thick] (n1) to  [in=120,out=70,looseness=5]  (n1);
\draw[->>, thick] (n1) to  [in=205,out=155,looseness=5] (n1);
 \draw[->>, thick] (n3) to  [in=70,out=20,looseness=5] (n3);
\draw[->, thick] (n1) edge  [bend left=-10] (n2); 
\draw[->>, thick] (n1) edge  [bend left=10] (n2); 
\draw[->, thick] (n2) edge  [bend left=-10] (n3); 
\end{tikzpicture}
 &
 $E_6 \& F_6$ 
 \begin{tikzpicture}
  [scale=.15,auto=left, node distance=1.5cm, every node/.style={circle,draw}]
 \node[fill=white] (n1) at (4,0) {\small{1}};
 \node[fill=white] (n2) at (24,0) {\small{2}}; \node[fill=white] (n3) at (14,9)  {\small{3}};
 \draw[->, thick] (n1) to  [in=120,out=70,looseness=5] (n1);
 \draw[->, thick] (n3) to  [in=110,out=160,looseness=5]  (n3);
\draw[->>, thick] (n3) edge [bend right=-10] (n1); 
\draw[->, thick] (n1) edge  [bend left=-10] (n2); 
\draw[->>, thick] (n1) edge  [bend left=10] (n2); 
\draw[<<-, thick] (n3) edge [bend right=-10] (n1); 
\end{tikzpicture}
&
 $E_6 \& F_4$ 
 \begin{tikzpicture}
  [scale=.15,auto=left, node distance=1.5cm, every node/.style={circle,draw}]
 \node[fill=white] (n1) at (4,0) {\small{1}};
 \node[fill=white] (n2) at (24,0) {\small{2}}; \node[fill=white] (n3) at (14,9)  {\small{3}};
 \draw[->, thick] (n1) to  [in=120,out=70,looseness=5] (n1);
 \draw[->, thick] (n3) to  [in=110,out=160,looseness=5]  (n3);
\draw[->>, thick] (n3) edge [bend right=-10] (n1); 
\draw[->, thick] (n1) edge  [bend left=-10] (n2); 
 \draw[<<-, thick] (n2) edge  [bend left=10] (n3); 
\draw[->>, thick] (n2) edge  [bend left=10] (n3); 
\end{tikzpicture}
&
$B_1 \& F_1$ 
\begin{tikzpicture}
  [scale=.15,auto=left, node distance=1.5cm, every node/.style={circle,draw}]
 \node[fill=white] (n1) at (4,0) {\small{1}};
 \node[fill=white] (n2) at (24,0) {\small{2}}; \node[fill=white] (n3) at (14,9)  {\small{3}};
 \draw[->, thick] (n1) to  [in=120,out=70,looseness=5] (n1);
\draw[<<-, thick] (n1) edge  [bend left=10] (n2); 
\draw[->>, thick] (n1) edge  [bend left=10] (n2); 
 \draw[<-, thick] (n2) edge  [bend left=-10] (n3); 
\draw[<<-, thick] (n3) edge [bend right=-10] (n1); 
\draw[->, thick] (n2) edge  [bend left=-10] (n3); 
\end{tikzpicture}
\\ \hline 
$F_1 \& F_2$ 
\begin{tikzpicture}
  [scale=.15,auto=left, node distance=1.5cm, every node/.style={circle,draw}]
 \node[fill=white] (n1) at (4,0) {\small{1}};
 \node[fill=white] (n2) at (24,0) {\small{2}}; \node[fill=white] (n3) at (14,9)  {\small{3}};
\draw[<-, thick] (n1) edge  [bend left=-10] (n2); 
 \draw[<<-, thick] (n1) edge  [bend left=10] (n2); 
\draw[->, thick] (n1) edge  [bend left=-10] (n2); 
\draw[->>, thick] (n1) edge  [bend left=10] (n2); 
\draw[<-, thick] (n3) edge [bend right=10] (n1); 
\draw[->>, thick] (n2) edge  [bend left=10] (n3); 
\end{tikzpicture}
& 
$F_1 \& F_3$ 
\begin{tikzpicture}
  [scale=.15,auto=left, node distance=1.5cm, every node/.style={circle,draw}]
 \node[fill=white] (n1) at (4,0) {\small{1}};
 \node[fill=white] (n2) at (24,0) {\small{2}}; \node[fill=white] (n3) at (14,9)  {\small{3}};
\draw[<-, thick] (n1) edge  [bend left=-10] (n2); 
 \draw[<<-, thick] (n1) edge  [bend left=10] (n2); 
\draw[->, thick] (n1) edge  [bend left=-10] (n2); 
 \draw[<<-, thick] (n2) edge  [bend left=10] (n3); 
\draw[<-, thick] (n3) edge [bend right=10] (n1); 
\draw[->>, thick] (n2) edge  [bend left=10] (n3); 
\end{tikzpicture}
 &  
$F_1 \& F_6$ 
\begin{tikzpicture}
  [scale=.15,auto=left, node distance=1.5cm, every node/.style={circle,draw}]
 \node[fill=white] (n1) at (4,0) {\small{1}};
 \node[fill=white] (n2) at (24,0) {\small{2}}; \node[fill=white] (n3) at (14,9)  {\small{3}};
\draw[<-, thick] (n1) edge  [bend left=-10] (n2); 
\draw[->>, thick] (n3) edge [bend right=-10] (n1); 
\draw[->, thick] (n1) edge  [bend left=-10] (n2); 
\draw[->>, thick] (n1) edge  [bend left=10] (n2); 
\draw[<-, thick] (n3) edge [bend right=10] (n1); 
\draw[<<-, thick] (n3) edge [bend right=-10] (n1); 
\end{tikzpicture}
 &  
 $D_1 \& F_5$ 
 \begin{tikzpicture}
  [scale=.15,auto=left, node distance=1.5cm, every node/.style={circle,draw}]
 \node[fill=white] (n1) at (4,0) {\small{1}};
 \node[fill=white] (n2) at (24,0) {\small{2}}; \node[fill=white] (n3) at (14,9)  {\small{3}};
 \draw[->, thick] (n1) to  [in=120,out=70,looseness=5] (n1);
\draw[->>, thick] (n3) edge [bend right=-10] (n1); 
\draw[->, thick] (n1) edge  [bend left=-10] (n2); 
 \draw[<<-, thick] (n2) edge  [bend left=10] (n3); 
\draw[<<-, thick] (n3) edge [bend right=-10] (n1); 
\draw[->, thick] (n2) edge  [bend left=-10] (n3); 
\end{tikzpicture}
\\ \hline 
\end{tabular}}
 }
 \caption{Minimal three-cell networks with two asymmetric inputs and one 2D synchrony subspace.} 
 \label{tab:C3L1.tex}
 \end{table}

\begin{table}
\resizebox{1 \textwidth}{!}{ 
   {\tiny 
 \begin{tabular}{|c|c|c|c|}
 \hline
 $E_6 \& E_4$ 
\begin{tikzpicture}
  [scale=.15,auto=left, node distance=1.5cm, every node/.style={circle,draw}]
 \node[fill=white] (n1) at (4,0) {\small{1}};
 \node[fill=white] (n2) at (24,0) {\small{2}}; \node[fill=white] (n3) at (14,9)  {\small{3}};
\draw[->, thick] (n1) to  [in=120,out=70,looseness=5]  (n1);
\draw[->>, thick] (n1) to  [in=205,out=155,looseness=5] (n1);
 \draw[->, thick] (n3) to  [in=110,out=160,looseness=5]  (n3);
 \draw[->>, thick] (n3) to  [in=70,out=20,looseness=5] (n3);
\draw[->, thick] (n1) edge  [bend left=-10] (n2); 
 \draw[<<-, thick] (n2) edge  [bend left=10] (n3); 
\end{tikzpicture}
 &  
$C_1 \& D_1$ 
\begin{tikzpicture}
  [scale=.15,auto=left, node distance=1.5cm, every node/.style={circle,draw}]
 \node[fill=white] (n1) at (4,0) {\small{1}};
 \node[fill=white] (n2) at (24,0) {\small{2}}; \node[fill=white] (n3) at (14,9)  {\small{3}};
\draw[->, thick] (n1) to  [in=120,out=70,looseness=5]  (n1);
\draw[->>, thick] (n1) to  [in=205,out=155,looseness=5] (n1);
\draw[->, thick] (n1) edge  [bend left=-10] (n2); 
\draw[->>, thick] (n1) edge  [bend left=10] (n2); 
\draw[<-, thick] (n3) edge [bend right=10] (n1); 
\draw[->>, thick] (n2) edge  [bend left=10] (n3); 
\end{tikzpicture}
&
$C_1 \& D_4$ 
\begin{tikzpicture}
  [scale=.15,auto=left, node distance=1.5cm, every node/.style={circle,draw}]
 \node[fill=white] (n1) at (4,0) {\small{1}};
 \node[fill=white] (n2) at (24,0) {\small{2}}; \node[fill=white] (n3) at (14,9)  {\small{3}};
 \draw[->, thick] (n1) to  [in=120,out=70,looseness=5] (n1);
 \draw[->>, thick] (n2) to  [in=70,out=120,looseness=5] (n2);
\draw[<<-, thick] (n1) edge  [bend left=10] (n2); 
\draw[->, thick] (n1) edge  [bend left=-10] (n2); 
\draw[<-, thick] (n3) edge [bend right=10] (n1); 
\draw[<<-, thick] (n3) edge [bend right=-10] (n1); 
\end{tikzpicture}
 &
$C_1 \& D_6$ 
\begin{tikzpicture}
  [scale=.15,auto=left, node distance=1.5cm, every node/.style={circle,draw}]
 \node[fill=white] (n1) at (4,0) {\small{1}};
 \node[fill=white] (n2) at (24,0) {\small{2}}; \node[fill=white] (n3) at (14,9)  {\small{3}};
 \draw[->, thick] (n1) to  [in=120,out=70,looseness=5] (n1);
 \draw[->>, thick] (n3) to  [in=70,out=20,looseness=5] (n3);
\draw[<<-, thick] (n1) edge  [bend left=10] (n2); 
\draw[->, thick] (n1) edge  [bend left=-10] (n2); 
 \draw[<<-, thick] (n2) edge  [bend left=10] (n3); 
\draw[<-, thick] (n3) edge [bend right=10] (n1); 
\end{tikzpicture}
 \\ \hline
$D_1 \& D_4$ 
\begin{tikzpicture}
  [scale=.15,auto=left, node distance=1.5cm, every node/.style={circle,draw}]
 \node[fill=white] (n1) at (4,0) {\small{1}};
 \node[fill=white] (n2) at (24,0) {\small{2}}; \node[fill=white] (n3) at (14,9)  {\small{3}};
 \draw[->, thick] (n1) to  [in=120,out=70,looseness=5] (n1);
 \draw[->>, thick] (n2) to  [in=70,out=120,looseness=5] (n2);
\draw[<<-, thick] (n1) edge  [bend left=10] (n2); 
\draw[->, thick] (n1) edge  [bend left=-10] (n2); 
\draw[<<-, thick] (n3) edge [bend right=-10] (n1); 
\draw[->, thick] (n2) edge  [bend left=-10] (n3); 
\end{tikzpicture}
\\ \cline{1-1}
\end{tabular}}
  }
 \caption{Minimal three-cell networks with two asymmetric inputs and one 2D synchrony subspace.} 
 \label{tab:C3L12.tex}
 \end{table}

 \begin{table}
 \begin{center}
 \resizebox{1 \textwidth}{!}{ 
 {\tiny 
 \begin{tabular}{|c|c|c|c|}
 \hline
$C_1 \& E_6$ \begin{tikzpicture}
  [scale=.15,auto=left, node distance=1.5cm, every node/.style={circle,draw}]
 \node[fill=white] (n1) at (4,0) {\small{1}};
 \node[fill=white] (n2) at (24,0) {\small{2}}; \node[fill=white] (n3) at (14,9)  {\small{3}};
\draw[->, thick] (n1) to  [in=120,out=70,looseness=5]  (n1);
\draw[->>, thick] (n1) to  [in=205,out=155,looseness=5] (n1);
 \draw[->>, thick] (n3) to  [in=70,out=20,looseness=5] (n3);
\draw[->, thick] (n1) edge  [bend left=-10] (n2); 
\draw[->>, thick] (n1) edge  [bend left=10] (n2); 
\draw[<-, thick] (n3) edge [bend right=10] (n1); 
\end{tikzpicture}
 &
$C_1 \& E_3$ 
\begin{tikzpicture}
  [scale=.15,auto=left, node distance=1.5cm, every node/.style={circle,draw}]
 \node[fill=white] (n1) at (4,0) {\small{1}};
 \node[fill=white] (n2) at (24,0) {\small{2}}; \node[fill=white] (n3) at (14,9)  {\small{3}};
\draw[->, thick] (n1) to  [in=120,out=70,looseness=5]  (n1);
\draw[->>, thick] (n1) to  [in=205,out=155,looseness=5] (n1);
 \draw[->>, thick] (n2) to  [in=70,out=120,looseness=5] (n2);
\draw[->, thick] (n1) edge  [bend left=-10] (n2); 
\draw[<-, thick] (n3) edge [bend right=10] (n1); 
\draw[->>, thick] (n2) edge  [bend left=10] (n3); 
\end{tikzpicture}
&
$C_1 \& F_1$ 
\begin{tikzpicture}
  [scale=.15,auto=left, node distance=1.5cm, every node/.style={circle,draw}]
 \node[fill=white] (n1) at (4,0) {\small{1}};
 \node[fill=white] (n2) at (24,0) {\small{2}}; \node[fill=white] (n3) at (14,9)  {\small{3}};
 \draw[->, thick] (n1) to  [in=120,out=70,looseness=5] (n1);
\draw[<<-, thick] (n1) edge  [bend left=10] (n2); 
\draw[->, thick] (n1) edge  [bend left=-10] (n2); 
\draw[->>, thick] (n1) edge  [bend left=10] (n2); 
\draw[<-, thick] (n3) edge [bend right=10] (n1); 
\draw[<<-, thick] (n3) edge [bend right=-10] (n1); 
\end{tikzpicture}
&
$C_1 \& F_2$ 
\begin{tikzpicture}
  [scale=.15,auto=left, node distance=1.5cm, every node/.style={circle,draw}]
 \node[fill=white] (n1) at (4,0) {\small{1}};
 \node[fill=white] (n2) at (24,0) {\small{2}}; \node[fill=white] (n3) at (14,9)  {\small{3}};
 \draw[->, thick] (n1) to  [in=120,out=70,looseness=5] (n1);
\draw[<<-, thick] (n1) edge  [bend left=10] (n2); 
\draw[->, thick] (n1) edge  [bend left=-10] (n2); 
\draw[->>, thick] (n1) edge  [bend left=10] (n2); 
\draw[<-, thick] (n3) edge [bend right=10] (n1); 
\draw[->>, thick] (n2) edge  [bend left=10] (n3); 
\end{tikzpicture}
\\ \hline 
$C_1 \& F_3$ 
\begin{tikzpicture}
  [scale=.15,auto=left, node distance=1.5cm, every node/.style={circle,draw}]
 \node[fill=white] (n1) at (4,0) {\small{1}};
 \node[fill=white] (n2) at (24,0) {\small{2}}; \node[fill=white] (n3) at (14,9)  {\small{3}};
 \draw[->, thick] (n1) to  [in=120,out=70,looseness=5] (n1);
\draw[<<-, thick] (n1) edge  [bend left=10] (n2); 
\draw[->, thick] (n1) edge  [bend left=-10] (n2); 
 \draw[<<-, thick] (n2) edge  [bend left=10] (n3); 
\draw[<-, thick] (n3) edge [bend right=10] (n1); 
\draw[->>, thick] (n2) edge  [bend left=10] (n3); 
\end{tikzpicture}
&
$E_6 \& F_5$ 
\begin{tikzpicture}
  [scale=.15,auto=left, node distance=1.5cm, every node/.style={circle,draw}]
 \node[fill=white] (n1) at (4,0) {\small{1}};
 \node[fill=white] (n2) at (24,0) {\small{2}}; \node[fill=white] (n3) at (14,9)  {\small{3}};
 \draw[->, thick] (n1) to  [in=120,out=70,looseness=5] (n1);
 \draw[->, thick] (n3) to  [in=110,out=160,looseness=5]  (n3);
\draw[->>, thick] (n3) edge [bend right=-10] (n1); 
\draw[->, thick] (n1) edge  [bend left=-10] (n2); 
 \draw[<<-, thick] (n2) edge  [bend left=10] (n3); 
\draw[<<-, thick] (n3) edge [bend right=-10] (n1); 
\end{tikzpicture}
&
$F_1 \& F_4$ 
\begin{tikzpicture}
  [scale=.15,auto=left, node distance=1.5cm, every node/.style={circle,draw}]
 \node[fill=white] (n1) at (4,0) {\small{1}};
 \node[fill=white] (n2) at (24,0) {\small{2}}; \node[fill=white] (n3) at (14,9)  {\small{3}};
\draw[<-, thick] (n1) edge  [bend left=-10] (n2); 
\draw[->>, thick] (n3) edge [bend right=-10] (n1); 
\draw[->, thick] (n1) edge  [bend left=-10] (n2); 
 \draw[<<-, thick] (n2) edge  [bend left=10] (n3); 
\draw[<-, thick] (n3) edge [bend right=10] (n1); 
\draw[->>, thick] (n2) edge  [bend left=10] (n3); 
\end{tikzpicture}
\\ 
 \cline{1-3} 
\end{tabular}}
}
 \end{center}
 \caption{Minimal three-cell networks with two asymmetric inputs and two 2D synchrony subspaces.} 
 \label{tab:C3L2.tex}
 \end{table}

\begin{table}
 \begin{center}
 {\tiny 
 \begin{tabular}{|c|}
 \hline 
 $C_1 \& C_2$ 
\begin{tikzpicture}
  [scale=.15,auto=left, node distance=1.5cm, every node/.style={circle,draw}]
 \node[fill=white] (n1) at (4,0) {\small{1}};
 \node[fill=white] (n2) at (24,0) {\small{2}}; \node[fill=white] (n3) at (14,9)  {\small{3}};
 \draw[->, thick] (n1) to  [in=120,out=70,looseness=5] (n1);
 \draw[->>, thick] (n2) to  [in=70,out=120,looseness=5] (n2);
\draw[<<-, thick] (n1) edge  [bend left=10] (n2); 
\draw[->, thick] (n1) edge  [bend left=-10] (n2); 
\draw[<-, thick] (n3) edge [bend right=10] (n1); 
\draw[->>, thick] (n2) edge  [bend left=10] (n3); 
\end{tikzpicture}
\\ \hline 
\end{tabular}}
 \end{center}
 \caption{Minimal three-cell network with two asymmetric inputs and three 2D synchrony subspaces.} 
 \label{tab:C2L3.tex}
 \end{table}

For each three-cell connected network  described in Theorem~\ref{thm:classification}, in order to investigate  which synchrony subspaces support a synchrony-breaking bifurcation branch of steady-state solutions, the strategy is the following. First, we use Proposition~\ref{Prop:eigen_reg_matrix} to see when the network eigenvalues are simple, semisimple or defective, and conjugate complex numbers or not. Combining this study about network eigenvalues and the number of  two-dimensional network synchrony subspaces given in Theorem~\ref{thm:classification}, we list the annotated network synchrony lattices. Finally, we use the results obtained  in Section~\ref{sec:ssbresults} to determine which synchrony subspaces support a synchrony-breaking bifurcation branch of steady-state solutions, assuming  a codimension-one steady-state bifurcation occurs determined by a degeneracy condition of the network Jacobian matrix at the origin.  In this last part, we remark that 
for three-cell networks with the synchrony lattice structure in Figure~\ref{lat:C2L3s} and corresponding bifurcation diagram given in Figure~\ref{lat:C2L3s}, Assumptions~\ref{asps:LSred2} have to be verified. For three-cell networks with  the synchrony lattice structure given in Figure~\ref{lat:C2L1d} and corresponding bifurcation diagram in Figure~\ref{lat:bdC2L1d}, condition~(\ref{eq:2detdef}) has to be verified. 
Table~\ref{tab:latdia} aggregates the information obtained in the previous steps and the results of this section which are obtained by explicit calculation for each network.

\begin{thm}\label{thm:classification2}   
Consider the three-cell networks listed in Theorem~\ref{thm:classification}.  For each such network ${\mathcal{N}}$ take coupled cell systems $f^{\mathcal{N}}$ where $f$ is generic and consider the corresponding Jacobian matrix $J^{\mathcal{N}}_f$ at the origin. We have the following:\\
(i) For the network $E_6 \& E_4$ in Table~\ref{tab:C3L12.tex}, $J^{\mathcal{N}}_f$ has  the valency eigenvalue with algebraic and geometric multiplicities $2$ and the eigenvalue $f_0$ with multiplicity $1$. 
\\
(ii) For the network $C$ in Table~\ref{tab:val13cell} and the network $C_1 \& C_2$ in Table~\ref{tab:C2L3.tex}, $J^{\mathcal{N}}_f$ has  the valency eigenvalue with multiplicity $1$ and the eigenvalue $f_0$ with algebraic and geometric multiplicities $2$. \\
(iii) For the network $D$ in Table~\ref{tab:val13cell} and networks $C_1 \& D_1$, $C_1 \& D_4$, $C_1 \& D_6$, $D_1 \& D_4$ in Table~\ref{tab:C3L12.tex}, $J^{\mathcal{N}}_f$ has the valency eigenvalue with multiplicity $1$ and the eigenvalue $f_0$ with algebraic multiplicity $2$ and geometric multiplicity $1$. 
\\
(iv) For the  network $F$ in Table~\ref{tab:val13cell}, the networks in Tables~\ref{tab:C3L1.tex} and ~\ref{tab:C3L2.tex} and networks $E_6 \& B_1$, $B_1 \& F_2$, $B_1 \& B_3$ in Table~\ref{tab:C3L0RI1.tex}, $J^{\mathcal{N}}_f$ has three distinct real eigenvalues with multiplicity $1$. \\
(v) For the networks in Table~\ref{tab:C3L0RI2.tex} and the network in Figure~\ref{fig:rep_min}, $J^{\mathcal{N}}_f$ has three distinct real eigenvalues with multiplicity $1$ on a open set of the generic functions.\\
(vi) For the network $A$ in Table~\ref{tab:val13cell} and networks $C_1 \& A_2$, $A_2 \& A_1$ in Table~\ref{tab:C3L0RI1.tex}, $J^{\mathcal{N}}_f$ has three distinct eigenvalues with multiplicity $1$ where the two eigenvalues different from the valency have nonzero imaginary parts.
\end{thm}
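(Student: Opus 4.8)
The plan is to prove this by a uniform, network-by-network application of Proposition~\ref{Prop:eigen_reg_matrix} to the Jacobian matrices listed in Table~\ref{tab:val13cell}, Tables~\ref{tab:C3L0RI1.tex}--\ref{tab:C2L3.tex} and Figure~\ref{fig:rep_min}, following exactly the pattern of Examples~\ref{ex:eigstr}. For each network $N$ I would read $J^N_f = f_0 \mbox{Id}_3 + \sum_j f_j A_j$ off the tables, record the valency $\upsilon$ (the common row sum), and form the two scalars $\alpha_0 = \mbox{det}(J^N_f)/\upsilon$ and $\alpha_1 = \mbox{tr}(J^N_f) - \upsilon$ as polynomials in the first derivatives $f_0,\dots,f_k$ of $f$. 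Feeding $(\upsilon,\alpha_0,\alpha_1)$ together with the entries of $J^N_f$ into Table~\ref{tab:esA} then pins down the eigenvalue structure, provided one decides, for each network, which of the relations $\alpha_0 = \upsilon^2$, $\alpha_1 = 2\upsilon$, $\upsilon(\alpha_1-\upsilon) = \alpha_0$, $\alpha_1^2 = 4\alpha_0$, and $c-e = b-f = d-a+e-f = 0$ hold identically in $f$ and which merely cut out a proper algebraic subset that a generic $f$ avoids.

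This bookkeeping distributes the networks among the six groups of the statement. If both $\alpha_0 = \upsilon^2$ and $\alpha_1 = 2\upsilon$ were identities, the valency would be an eigenvalue of algebraic multiplicity $3$; by \cite[Proposition 5.6]{ADS19} this would force $s(N)=3$ and $N$ disconnected, so this case never arises. When $\upsilon(\alpha_1-\upsilon) = \alpha_0$ is an identity but $\alpha_1 = 2\upsilon$ is not, the valency has algebraic multiplicity $2$ and $\alpha_1-\upsilon$ is a simple eigenvalue; the entry relations then isolate the diagonalizable subcase, which is case (ii), the network $E_6\&E_4$, where $(c-e,b-f,d-a+e-f)\equiv(0,0,0)$. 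When $\alpha_1^2 = 4\alpha_0$ is an identity while $\upsilon(\alpha_1-\upsilon)\ne\alpha_0$ generically, the repeated non-valency eigenvalue equals $\alpha_1/2 = f_0$, and again the entry relations decide: it is semisimple in case (iii) (networks $C$ and $C_1\&C_2$, where $(c-e,b-f,d-a+e-f)\equiv(0,0,0)$) and defective of type $(1,2)$ in case (i) (network $D$ and $C_1\&D_1$, $C_1\&D_4$, $C_1\&D_6$, $D_1\&D_4$, where at least one of those three expressions is a nonzero polynomial). In all the remaining networks none of the identities holds, so $J^N_f$ has three distinct eigenvalues for generic $f$, and the sign of the discriminant $\alpha_1^2 - 4\alpha_0$ of the quotient quadratic $\lambda^2 - \alpha_1\lambda + \alpha_0$ decides reality: $\alpha_1^2-4\alpha_0$ is a sum of squares (hence positive off the degeneracy locus) for case (iv); it is a negative multiple of a perfect square (hence $\le 0$, vanishing only on a proper subset) for case (vi), the networks $A$, $C_1\&A_2$, $A_2\&A_1$; and it is sign-indefinite for case (v), so that all three eigenvalues are real precisely on the nonempty open set $\{\alpha_1^2-4\alpha_0>0\}$.

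Two steps are not purely mechanical. First, for the networks placed in case (i) one must verify that the geometric multiplicity of the repeated eigenvalue is genuinely $1$; by the proof of Proposition~\ref{Prop:eigen_reg_matrix} this reduces to showing that the $2\times 2$ matrix $B = 2\left(S - \tfrac{\alpha_1}{2}\mbox{Id}_2\right)$ does not vanish identically, i.e. that at least one of $c-e$, $b-f$, $d-a+e-f$ is a nonzero polynomial in the derivatives of $f$ --- a short check for each of the five networks. Second, and this is the main obstacle, for each network that survives all the identity tests (those of Tables~\ref{tab:C3L1.tex}, \ref{tab:C3L2.tex}, Table~\ref{tab:C3L0RI1.tex}, all of Table~\ref{tab:C3L0RI2.tex}, networks $A$ and $F$ of Table~\ref{tab:val13cell}, and the network of Figure~\ref{fig:rep_min}) one has to compute $\alpha_1^2 - 4\alpha_0$ explicitly and recognise it as a positive-definite, a negative, or a genuinely indefinite quadratic form in the remaining derivatives; this is where all the real algebra sits, but each individual calculation is short, just as in Examples~\ref{ex:eigstr}(iv)--(vi). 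For the six-input network of Figure~\ref{fig:rep_min} one may shortcut the discriminant computation using the fact, recorded in the remark preceding that figure, that the six adjacency matrices together with $\mbox{Id}_3$ span all $3\times 3$ matrices of constant row sum: the two non-valency eigenvalues are then generically arbitrary, hence simple and real on an open set, placing the network in case (v). Everything else is a finite, uniform verification from the tables.
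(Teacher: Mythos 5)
Your proposal is correct and follows essentially the same route as the paper: the authors prove this theorem precisely by "direct application of Proposition~\ref{Prop:eigen_reg_matrix}" to each network's Jacobian, with Examples~\ref{ex:eigstr}(i)--(vi) serving as the template for each of the six cases (including the entry test $(c-e,b-f,d-a+e-f)$ for geometric multiplicity and the sign analysis of $\alpha_1^2-4\alpha_0$ for reality of the eigenvalues). Your two flagged non-mechanical steps and the span argument for the six-input network match the paper's treatment exactly.
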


\begin{proof} 
The proof of this result goes through the direct application of Proposition~\ref{Prop:eigen_reg_matrix} to the networks with three-cells listed in Theorem~\ref{thm:classification}, obtaining  the fourth column in Table~\ref{tab:latdia}.  
We present details of  that in some illustrative examples. 
Recall the notation of Proposition~\ref{Prop:eigen_reg_matrix}. \\
(i) Consider the network ${\mathcal{N}}_1 = E_6 \& E_4$ in Table~\ref{tab:C3L12.tex} and $f:\mathbb{R}^3\rightarrow \mathbb{R}$ generic. We have that 
$$\upsilon=f_0+f_1+f_2, \quad 
\alpha_0 = f_0(f_0+f_1+f_2),\quad \alpha_1 =  2f_0+f_1+f_2\, . $$
Now $\alpha_0 =\upsilon^2$ if and only if $f_1+f_2=0$.
Assuming the non-degeneracy condition $f_1+f_2\neq 0$, it follows from Proposition~\ref{Prop:eigen_reg_matrix} that the eigenvalue $f_0+f_1+f_2$ has algebraic multiplicity lower that $3$.
Note that $\upsilon (\alpha_1 -\upsilon) = \alpha_0 $ for every $f$.
Thus the eigenvalue $f_0+f_1+f_2$ has algebraic multiplicity $2$ and the eigenvalue $f_0$ has algebraic multiplicity $1$, for $f$ generic.
\\
(ii) Consider the network ${\mathcal{N}}_2 =C_1 \& C_2$ in Table~\ref{tab:C2L3.tex} and $f:\mathbb{R}^3\rightarrow \mathbb{R}$ generic  
and assume the non-degenerated conditions  $\alpha_1\neq 2 \upsilon$ and $\upsilon (\alpha_1 -\upsilon) \neq \alpha_0$.
Like in the previous example, we have that $\alpha_1^2 = 4\alpha_0$ for every $f$.
However, in this case, we also have that $c-e=b-f= d-a+e-f = 0$ for every $f$.
It follows from Proposition~\ref{Prop:eigen_reg_matrix} that $J_f^{{\mathcal{N}}_2}$ has the eigenvalue $f_0+f_1+f_2$ with algebraic multiplicity $1$ and the eigenvalue $f_0$ with algebraic and geometric multiplicity $2$.
\\
(iii) Consider the network ${\mathcal{N}}_3 = C_1 \& D_1$ in Table~\ref{tab:C3L12.tex} and $f:\mathbb{R}^3\rightarrow \mathbb{R}$ generic. We have that 
$$\upsilon=f_0+f_1+f_2, \quad 
\alpha_0 = f_0^2,\quad \alpha_1 =2f_0\, .$$
In this case,  $\alpha_1=2 \upsilon$ if and only if $f_1+f_2=0$. Also, $\alpha_0 = \upsilon(\alpha_1-\upsilon)$ if and only if $f_1+f_2=0$. 
This condition is degenerated and we can take the corresponding inequality to obtain a non-degeneracy condition satisfied by any generic function $f$.
By Proposition~\ref{Prop:eigen_reg_matrix}, we know that the eigenvalue $f_0+f_1+f_2$ has algebraic multiplicity $1$.
Note that $\alpha_1^2 = 4\alpha_0$, for any $f$. 
Using the notation of Proposition~\ref{Prop:eigen_reg_matrix}, we have that $c-e=0$ if and only if $f_2=0$; also,  $b-f=0$ if and only if $f_2=0$. 
For a generic function $f$, assume that $f_2 \not= 0$.
 We obtain so that $f_0$ is an eigenvalue of $J_f^{{\mathcal{N}}_3}$ with algebraic multiplicity $2$ and geometric multiplicity $1$ for $f$ generic.
\\
(iv) Consider the network ${\mathcal{N}}_4 =E_6 \& B_1$ in Table~\ref{tab:C3L0RI1.tex} and $f:\mathbb{R}^3\rightarrow \mathbb{R}$ satisfying the non-degeneracy conditions $\upsilon (\alpha_1-\upsilon) \neq \alpha_0 $ and $\alpha_1^2 \neq 4\alpha_0$. 
By Proposition~\ref{Prop:eigen_reg_matrix}, the Jacobian matrix has 3 distinct eigenvalues with algebraic multiplicity $1$ for $f$ generic. 
Note that $\alpha_1^2-4\alpha_0= f_1^2+4f_2^2>0$ for any $f$ satisfying the non-degeneracy condition above.
Thus the eigenvalues are real, for every $f$ generic.
\\
(v) Consider the network ${\mathcal{N}}_5 = D_1 \& D_2$ in Table~\ref{tab:C3L0RI2.tex} and $f:\mathbb{R}^3\rightarrow \mathbb{R}$ satisfying the non-degenerated conditions  $\upsilon (\alpha_1-\upsilon) \neq \alpha_0 $ and $\alpha_1^2 \neq 4\alpha_0$.
Again using Proposition~\ref{Prop:eigen_reg_matrix}, the Jacobian matrix has 3 distinct eigenvalues with algebraic multiplicity $1$ for $f$ generic.
Note that $\alpha_1^2-4\alpha_0= 4f_1f_2 - 8f_1^2$ can be positive or negative for distinct open regions of functions. 
The three eigenvalues are real for any generic function $f$ in the open set given by $f_1f_2 - 2f_1^2>0$.\\
(vi) Consider the network ${\mathcal{N}}_6 = C_1 \& A_2$ in Table~\ref{tab:C3L0RI1.tex} and $f:\mathbb{R}^3\rightarrow \mathbb{R}$ satisfying the non-degeneracy  conditions  $\upsilon (\alpha_1-\upsilon) \neq \alpha_0 $ and $\alpha_1^2 \neq 4\alpha_0$.
By Proposition~\ref{Prop:eigen_reg_matrix}, the Jacobian matrix has 3 distinct eigenvalues with algebraic multiplicity $1$. 
Note that $\alpha_1^2-4\alpha_0= -3f_2^2<0$ for every $f$ satisfying the above non-degeneracy condition.  
Thus the two eigenvalues different of $f_1+f_2+f_3$ are conjugate complex numbers with  imaginary part different from $0$.
\end{proof}

\begin{table}
\begin{tabular}{|c|c|c|c|c|c|}
\hline 
Network 			&       Table 				& \# 2D  &            Network 			&      Annotated 				& Bifurcation \\
							&  										& 			&         eigenvalues 			&      lattice 					& diagram \\
 \hline 
 $E_6 \& E_4$ &  \ref{tab:C3L12.tex}& 1 							&  $\upsilon,\upsilon,\mu$&Figure~\ref{lat:C2L1v}	& Figure~\ref{lat:bdC2L1v}\\
\hline 
 $C_1 \& C_2$ &  \ref{tab:C2L3.tex}	&	3								&		$\upsilon,\mu,\mu$		&Figure~\ref{lat:C2L3s}	& Figure~\ref{lat:bdC2L3s} \\
  $C$ &  \ref{tab:val13cell} 			&&& &\\
 \hline 
  $C_1 \& D_1$, $C_1 \& D_4$ &  \ref{tab:C3L12.tex} &   & & &\\
  $C_1 \& D_6$, $D_1 \& D_4$ &  \ref{tab:C3L12.tex}	& 1 &  $\upsilon,\mu^*$     &Figure~\ref{lat:C2L1d}	& Figure~\ref{lat:bdC2L1d}\\
   $D$ 												&  \ref{tab:val13cell}&   && & \\
  \hline 
 $C_1 \& A_2$, $A_2 \& A_1$ & \ref{tab:C3L0RI1.tex} &		0						&	$\upsilon,\mu_1,\mu_2$		& Figure~\ref{lat:C3L0}	& Figure~\ref{lat:bdC3L0a}\\
$A$ 					& \ref{tab:val13cell}&		&		$\mu_1=\overline{\mu_2}$&												&\\
\hline
$E_6 \& B_1$, $B_1 \& F_2$ & \ref{tab:C3L0RI1.tex}&&&&\\
$B_1 \& B_3$ & \ref{tab:C3L0RI1.tex}	&	0							&	$\upsilon,\mu_1,\mu_2$	& Figure~\ref{lat:C3L0}	& Figure~\ref{lat:bdC3L0b}\\
 Figure~\ref{fig:rep_min} & 				&								&$\mu_1\neq\overline{\mu_2}$& 											&\\
\hline
 All & 	 \ref{tab:C3L0RI2.tex} 			& 0							&$\upsilon,\mu_1,\mu_2$			& Figure~\ref{lat:C3L0} & Figure~\ref{lat:bdC3L0a} or~\ref{lat:bdC3L0b}\\
\hline 
   All  &  \ref{tab:C3L1.tex} 			& 1							&$\upsilon,\mu_1,\mu_2$			& Figure \ref{lat:C3L1} & Figure~\ref{lat:bdC3L1} \\
				& 													&								&$\mu_1\neq\overline{\mu_2}$& 											&\\
   \hline 
    All &  \ref{tab:C3L2.tex}				&	2							&$\upsilon,\mu_1,\mu_2$			&Figure~\ref{lat:C3L2} & Figure~\ref{lat:bdC3L2} \\
     $F$ &  \ref{tab:val13cell}			&								&$\mu_1\neq\overline{\mu_2}$&											 & \\
    \hline 
\end{tabular}
\caption{This table aggregates the results obtained for the networks listed in Theorem~\ref{thm:classification}.
The third column, \# 2D, corresponds to the number of two-dimensional synchrony subspaces for the corresponding network. 
The fourth column displays the network eigenvalues as obtained in Theorem~\ref{thm:classification2}. 
The valency eigenvalue is denoted by $\upsilon$ and other network eigenvalues are denoted by $\mu$ or $\mu_1,\mu_2$.
Defective network eigenvalues with algebraic multiplicity two and geometry multiplicity one are marked with a star. 
There is also information on whether the eigenvalues $\mu_1,\mu_2$ are conjugate complex numbers or not. 
The fifth column corresponds to the annotated synchrony lattices as given in Theorem~\ref{thm:classificationlat} and the last column displays the bifurcation diagram.}
\label{tab:latdia}
\end{table}

\begin{rem} \label{rem:um_dois}
Among the three-cell networks in Table~\ref{tab:val13cell}, Tables~\ref{tab:C3L0RI1.tex}-\ref{tab:C2L3.tex} and Figure~\ref{fig:rep_min}, presented in Theorem~\ref{thm:classification}, taking a coupled cell system $f^{\mathcal{N}}$ where $f$ is generic and the corresponding Jacobian $J^{\mathcal{N}}_f$ at the origin, we 
have that: \\
(i) $J^{\mathcal{N}}_f$ is semisimple, except for the networks in Theorem~\ref{thm:classification2} (iii);\\
(ii) $J^{\mathcal{N}}_f$ has always a pair of conjugate complex eigenvalues with nonzero imaginary part for the networks in Theorem~\ref{thm:classification2} (vi). 
It has a pair of conjugate complex eigenvalues with nonzero imaginary part in a region of the functions $f$ for the networks in Theorem~\ref{thm:classification2} (v).
In this work we focus on steady-state bifurcations and we do not address the cases where the eigenvalues are conjugate complex numbers. 
We point out that in those cases,  Hopf bifurcation can occur. 
\hfill $\Diamond$
\end{rem}

Combining Theorems~\ref{thm:classification}-\ref{thm:classification2} and Theorem~\ref{thm:lala}, we have the following classification on the networks  annotated synchrony lattices:

\begin{thm}\label{thm:classificationlat}
The annotated synchrony lattice structures for the three-cell networks listed in Theorem~\ref{thm:classification} are presented in the fifth column of Table~\ref{tab:latdia}. 
\end{thm}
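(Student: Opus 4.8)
\emph{Plan.} The statement is an assembly of the three preceding classifications, so the proof is essentially bookkeeping: for each network in Theorem~\ref{thm:classification} one reads off two discrete invariants and matches them against the seven pictures of Figures~\ref{Parte1}--\ref{Parte3}. First I would record that Theorem~\ref{lala} (more precisely, its proof) shows that the annotated synchrony lattice of a generic coupled cell system on a connected three-cell network with asymmetric inputs is \emph{completely determined} by (a) the eigenvalue structure of the generic Jacobian $J^N_f$ at the origin --- namely whether it has two or three distinct eigenfunctions, the algebraic multiplicity $m_a(\upsilon)$ of the valency, and, when a non-valency eigenfunction is repeated, its geometric multiplicity --- together with (b) the number of two-dimensional synchrony subspaces of $N$. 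Indeed, running through the four cases of that proof: $m_a(\upsilon)=2$ forces Figure~\ref{lat:C2L1v}; $m_a(\upsilon)=1$ with a single non-valency eigenfunction of geometric multiplicity $2$ forces Figure~\ref{lat:C2L3s}; $m_a(\upsilon)=1$ with a \emph{defective} non-valency eigenfunction yields Figure~\ref{lat:C2L1d} or Figure~\ref{lat:C2L0d} according as there is one or no two-dimensional synchrony subspace; and three distinct (simple) eigenfunctions yield Figure~\ref{lat:C3L0}, Figure~\ref{lat:C3L1} or Figure~\ref{lat:C3L2} according as the number of two-dimensional synchrony subspaces is $0$, $1$ or $2$. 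In every case the labels on the nodes are then forced: the bottom carries $\upsilon$, the top carries all the distinct eigenfunctions, and each two-dimensional synchrony subspace has a \emph{connected} two-cell quotient, hence its node carries $\upsilon$ together with the unique non-valency eigenfunction whose eigenspace meets that polydiagonal (Remark~\ref{rmk:eigquo}).

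\emph{Reading off the invariants.} For each network listed in Theorem~\ref{thm:classification}, invariant (a) is exactly the content of Theorem~\ref{thm:classification2} (and, for the four networks with one asymmetric input, is already displayed in the eigenvalue columns of Table~\ref{tab:val13cell}); invariant (b) is recorded in the statement of Theorem~\ref{thm:classification} and in Tables~\ref{tab:val13cell} and \ref{tab:C3L0RI1.tex}--\ref{tab:C2L3.tex}. Feeding these two data into the dictionary above attaches to every network one of the seven structures of Figures~\ref{Parte1}--\ref{Parte3}, and collecting the outcomes is Table~\ref{blabla}. For the networks of Theorem~\ref{thm:classification2}(v) and (vi), where the generic Jacobian may carry a complex-conjugate pair of eigenvalues, note that case (iv) of the proof of Theorem~\ref{lala} used only that the three eigenfunctions are distinct and simple, never that they are real; hence the assignment is unaffected (and for the networks of (v) the lattice is in any case the same on the open set of generic $f$ where all eigenvalues are real).

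\emph{Where care is needed.} The only step that is not purely formal is checking that the count of two-dimensional synchrony subspaces in the tables is consistent with the eigenfunctions attached to them --- i.e.\ that the eigenvector of $J^N_f$ whose eigenfunction is assigned to a node $\Delta_l$ actually lies in that polydiagonal. This is precisely what the explicit eigenvectors listed in Tables~\ref{tab:eigC2L1SS.tex}--\ref{tab:eigC3L1.tex} make transparent, and it is the (routine but lengthy) verification one must carry out uniformly over all the networks of Theorem~\ref{thm:classification}. Performing this yields Table~\ref{blabla}; one observes in passing that among the seven structures of Figures~\ref{Parte1}--\ref{Parte3} exactly one, the $\mu$-defective and $\mu$-maximal lattice of Figure~\ref{lat:C2L0d}, is not realized by any of these networks, in agreement with the count announced in the introduction.
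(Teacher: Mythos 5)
Your proposal is correct and follows exactly the route the paper takes: the paper offers no written proof beyond the sentence ``Combining Theorems~\ref{thm:classification}, \ref{thm:classification2} and \ref{lala}, we have the following classification,'' and your argument is precisely that combination carried out explicitly --- reading off $m_a(\upsilon)$, the multiplicity type of the non-valency eigenfunction, and the number of two-dimensional synchrony subspaces, then matching against the case analysis in the proof of Theorem~\ref{lala}. Your added remarks on the complex-eigenvalue networks of Theorem~\ref{thm:classification2}(v)--(vi) and on the eigenvector consistency check via Tables~\ref{tab:eigC2L1SS.tex}--\ref{tab:eigC3L1.tex} are sound and, if anything, more careful than what the paper records.
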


\begin{proof} 
For completeness, we illustrate how Theorems~\ref{thm:classification}-\ref{thm:classification2} determine the networks  annotated synchrony lattices for some of the networks listed in Theorem~\ref{thm:classification}.
\\  
(i) The Jacobian matrix for the network ${\mathcal{N}}_1 = E_6 \& E_4$ in Table~\ref{tab:C3L12.tex} has the valency eigenvalue $f_0+f_1+f_2$ with algebraic multiplicity $2$ and the eigenvalue $f_0$ with algebraic multiplicity $1$, for $f$ generic.
Since the network ${\mathcal{N}}_1$ has one two-dimensional synchrony subspace, we know that the annotated lattice of ${\mathcal{N}}_1$ is given in Figure~\ref{lat:C2L1v}.
\\
(ii) The Jacobian matrix for the network ${\mathcal{N}}_2 =C_1 \& C_2$ in Table~\ref{tab:C2L3.tex}, has the valency eigenvalue $f_0+f_1+f_2$ with algebraic multiplicity $1$ and the eigenvalue $f_0$ with algebraic and geometric multiplicity $2$.
We also know that the network ${\mathcal{N}}_2$ has three two-dimensional synchrony subspaces and its lattice is given in Figure~\ref{lat:C2L3s}.
\\
(iii) The Jacobian matrix for the network ${\mathcal{N}}_3 = C_1 \& D_1$ in Table~\ref{tab:C3L12.tex} has the valency eigenvalue $f_0+f_1+f_2$ with algebraic multiplicity $1$ and the eigenvalue $f_0$ with algebraic multiplicity $2$ and geometric multiplicity $1$, for $f$ generic.
Moreover, the network ${\mathcal{N}}_3$ has one two-dimensional synchrony subspace and its lattice is given in Figure~\ref{lat:C2L1d}.
\\
(iv) The Jacobian matrix for the network ${\mathcal{N}}_4 =E_6 \& B_1$ in Table~\ref{tab:C3L0RI1.tex} has 3 distinct eigenvalues with algebraic multiplicity $1$, for $f$ generic. 
The network ${\mathcal{N}}_4$ has no two-dimensional synchrony subspace and its annotated lattice is given in Figure~\ref{lat:C3L0}.
\\
(v) The Jacobian matrix for the network ${\mathcal{N}}_5 =  D_1 \& E_1$ in Table~\ref{tab:C3L1.tex}, has 3 distinct eigenvalues with algebraic multiplicity $1$, for $f$ generic.
And the network ${\mathcal{N}}_5$ has one two-dimensional synchrony subspace. Thus its annotated lattice is given in Figure~\ref{lat:C3L1}.
\\
(vi) The Jacobian matrix for the network ${\mathcal{N}}_6 = C_1 \& E_6$ in Table~\ref{tab:C3L2.tex} has 3 distinct eigenvalues with algebraic multiplicity $1$, for $f$ generic. 
Moreover, the network ${\mathcal{N}}_6$ has two two-dimensional synchrony subspace. Thus its annotated lattice is given in Figure~\ref{lat:C3L2}.
\end{proof}

\begin{rem} From the possible lattice structures presented in Theorem~\ref{thm:lala} for connected three-cell networks with asymmetric inputs, we have: \\
(i) Only the lattice structure in Figure~\ref{lat:C2L0d} does not appear when we restrict to networks with one or two asymmetric inputs. \\
(ii) The lattice structures of Figures~\ref{lat:C2L1v} and  \ref{lat:C3L1}  occur for connected three-cell networks with two asymmetric inputs but not for connected three-cell networks with one asymmetric input. 
\hfill $\Diamond$
\end{rem}

Finally, we classify which synchrony subspaces support a steady-state bifurcation branch for generic bifurcation problems on coupled cell systems of the three-cell networks given in Theorem~\ref{thm:classification}. This classification appears at the sixth column of Table~\ref{tab:latdia}.

\begin{thm}\label{thm:finalbif}
For the networks given in Theorem~\ref{thm:classification}, we have that every synchrony subspace supports a bifurcation branch of steady-state solutions,
for generic bifurcation problems  on coupled cell systems respecting the appropriate bifurcation condition, except for the network phase space $\R^3$ in the case of all the networks  in Tables~~\ref{tab:C3L0RI2.tex}, \ref{tab:C3L2.tex}-\ref{tab:C2L3.tex}, the networks $C_1 \& A_2$, $A_2 \& A_1$ in Table~\ref{tab:C3L0RI1.tex}, networks $A$, $C$, $F$ in Table~\ref{tab:val13cell} and the network in Figure~\ref{fig:rep_min}.  However, for each of the networks in Table~\ref{tab:C3L0RI2.tex} and the network in Figure~\ref{fig:rep_min},  there exists an open set of generic bifurcation problems, on coupled cell systems respecting the appropriate bifurcation condition, where the network phase space $\R^3$ supports a bifurcation branch. 
The sixth column of Table~\ref{tab:latdia} contains the bifurcation diagrams for the networks given in Theorem~\ref{thm:classification}.
\end{thm}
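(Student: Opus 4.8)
The plan is to run a case analysis over the seven possible annotated synchrony lattice structures, using Table~\ref{blabla} to read off, for each network in Theorem~\ref{thm:classification}, which lattice among Figures~\ref{lat:C2L1v}--\ref{lat:C3L2} it realizes, and then to invoke the relevant result of Section~\ref{sec:ssbresults} together with the $\mu$-type and $\mu$-maximality data supplied by Theorem~\ref{thm:classification3}. Two families of subspaces are handled uniformly across all networks. First, by Theorem~\ref{thm:classification3}(i) the full-synchrony subspace $\Delta_0$ is $\upsilon$-simple and $\upsilon$-maximal, so Theorem~\ref{thm:LSred1} applied with $\mu=\upsilon$ yields a fully synchronized branch, which has synchrony $\Delta_0$. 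Second, by Theorem~\ref{thm:classification3}(ii) every two-dimensional synchrony subspace $\Delta$ is $\mu$-simple and $\mu$-maximal for the nonvalency eigenfunction $\mu$ of its two-cell quotient; since that $\mu$ is the remaining eigenvalue of a $2\times 2$ constant-row-sum matrix it is automatically real, so $\mathcal{V}_{\mu}(N)\neq\emptyset$ and Theorem~\ref{thm:LSred1} again produces a branch with synchrony $\Delta$. This disposes of every synchrony subspace other than the top $\R^3$.

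For the top $\R^3$ I would go through the lattice types in which it \emph{does} support a branch. If the lattice is Figure~\ref{lat:C2L1v} (only $E_6\&E_4$), then $m_a(\upsilon)=2$, so $N$ has two source components and $\R^3$ is the valency synchrony-breaking subspace; Proposition~\ref{prop:valsynbre} gives a branch with synchrony $\R^3$. If the lattice is Figure~\ref{lat:C2L1d} (networks $C_1\&D_1$, $C_1\&D_4$, $C_1\&D_6$, $D_1\&D_4$ and the one-input network $D$), then by Theorem~\ref{thm:classification3}(iii.c) $\R^3$ is $\mu$-defective of multiplicity $(1,2)$ and $\mu$-submaximal of order $1$; after checking condition~(\ref{eq:2detdef}) on the explicit Jacobian and the eigenvectors of Table~\ref{tab:eigC2L1SS.tex}, Theorem~\ref{thm:LSred3} produces, besides the branch inside the two-dimensional subspace, a branch with synchrony $\R^3$. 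If the lattice is Figure~\ref{lat:C3L1} (the networks of Table~\ref{tab:C3L1.tex}), or Figure~\ref{lat:C3L0} with a real nonvalency eigenfunction (the networks $E_6\&B_1$, $B_1\&F_2$, $B_1\&B_3$ of Table~\ref{tab:C3L0RI1.tex}, real by Theorem~\ref{thm:classification2}(iv)), then $\R^3$ is $\mu$-simple and $\mu$-maximal (Theorem~\ref{thm:classification3}(iii.e)) and Theorem~\ref{thm:LSred1} gives a branch with synchrony $\R^3$. Finally, if the lattice is Figure~\ref{lat:C3L0} for one of the networks of Table~\ref{tab:C3L0RI2.tex} or the six-input network of Figure~\ref{fig:rep_min}, then by Theorem~\ref{thm:classification2}(v) the nonvalency eigenfunction $\mu$ is real precisely on a nonempty open subset of the generic functions; on that subset $\mathcal{V}_{\mu}(N)\neq\emptyset$ and Theorem~\ref{thm:LSred1} again yields a branch with synchrony $\R^3$, which is exactly the ``however'' clause.

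In the remaining cases $\R^3$ carries no bifurcation branch. If the lattice is Figure~\ref{lat:C2L3s} (network $C_1\&C_2$ and the one-input network $C$), then by Theorem~\ref{thm:classification3}(iii.b) $\R^3$ is $\mu$-semisimple of multiplicity $2$ and $\mu$-submaximal of order $3=2^2-1$; verifying on the explicit Jacobian and the eigenvectors of Table~\ref{tab:eigC3L1.tex} that the bifurcation problem on each of the three $\mu$-maximal two-dimensional subspaces is $2$-determined and that Assumptions~\ref{asps:LSred2} hold (the no-common-factor part via Lemma~\ref{aux_lemma}), Theorem~\ref{thm:LSred2} gives that no branch has synchrony $\R^3$. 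If the lattice is Figure~\ref{lat:C3L2} (the networks of Table~\ref{tab:C3L2.tex} and the one-input network $F$), then by Theorem~\ref{thm:classification3}(iii.f) $\R^3$ is $\mu$-simple but $\mu$-submaximal of order $1$ (for each of the two nonvalency eigenfunctions); since $\mu$ is simple the Lyapunov--Schmidt reduction of $f^N=0$ at the bifurcation condition $\mu=0$ is one-dimensional and, by the $r$-determinacy argument in the proof of Theorem~\ref{thm:LSred1}, has a unique nontrivial branch, which must coincide with the one already produced inside the $\mu$-maximal two-dimensional subspace below $\R^3$; hence no branch has synchrony $\R^3$. Lastly, for $A$, $C_1\&A_2$, $A_2\&A_1$ (lattice Figure~\ref{lat:C3L0}), Theorem~\ref{thm:classification2}(vi) and Remark~\ref{rem:um_dois} say the two nonvalency eigenfunctions are non-real for all generic $f$, so $\mathrm{int}(\mu^{-1}(\R))=\emptyset$ and $\mathcal{V}_{\mu}(N)=\emptyset$; as each of these networks has a single source component there is also no valency synchrony-breaking subspace, so no steady-state bifurcation branch can have synchrony $\R^3$. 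Collecting the cases proves the statement.

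The main obstacle is the verification of the non-degeneracy hypotheses for the specific Jacobians: condition~(\ref{eq:2detdef}) for the five networks with lattice Figure~\ref{lat:C2L1d}, and $2$-determinacy together with Assumptions~\ref{asps:LSred2} for the two networks with lattice Figure~\ref{lat:C2L3s}. In each case this reduces to exhibiting a pair $(p,q)$ of input indices, or a second-order directional derivative, for which the relevant expression in the second derivatives of $f$ and in the explicitly known vectors $v_i,v_i^{*}$ is not identically zero, so that the equalities making these expressions vanish are genuine degeneracies avoided by generic $f$; the common-factor part of Assumptions~\ref{asps:LSred2} is controlled by Lemma~\ref{aux_lemma}. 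A secondary point, needed in the Figure~\ref{lat:C3L2} case, is the remark that a simple eigenfunction produces at most one nontrivial branch up to reparametrization, which forces that branch into the maximal subspace supplied by Theorem~\ref{thm:LSred1}.
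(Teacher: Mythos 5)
Your proposal is correct and follows essentially the same route as the paper: branches for $\Delta_0$ and the two-dimensional synchrony subspaces via Theorem~\ref{thm:LSred1}, then a case split on the lattice types of Theorem~\ref{thm:classification3} for $\R^3$, using Proposition~\ref{prop:valsynbre}, Theorem~\ref{thm:LSred3} with condition~(\ref{eq:2detdef}), Theorem~\ref{thm:LSred2} with Assumptions~\ref{asps:LSred2}, and the emptiness of $\mathcal{V}_{\mu}(N)$ for the complex-eigenvalue networks. Your justification that a simple, submaximal eigenfunction yields no branch outside the maximal subspace (uniqueness of the nontrivial branch of the one-dimensional reduced problem) is the same Implicit-Function-Theorem observation the paper makes.
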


\begin{proof}
The proof of Theorem~\ref{thm:finalbif} follows as in the proof of Theorem~\ref{thm:bif_diag}, 
where the bifurcation diagram is obtained for each possible annotated synchrony lattice structure 
taking every connected three-cell network given in Theorem~\ref{thm:lala}.  
We include an illustrative example for each case studied in the proof of Theorem~\ref{thm:bif_diag} 
except case (iv), since the networks given in Theorem~\ref{thm:classification} do not have the 
synchrony lattice structure given in Figure~\ref{lat:C2L0d}. If a particular network has the lattice  given by 
Figure~\ref{lat:C2L3s} or Figure~\ref{lat:C2L1d}, then we need to check Assumptions~\ref{asps:LSred2} or condition (\ref{eq:2detdef}), respectively.
More specifically, for the examples illustrating cases (ii) and (iii) in the proof  of Theorem~\ref{thm:bif_diag}, 
we provide non-degeneracy conditions associated with Assumptions~\ref{asps:LSred2} and condition (\ref{eq:2detdef}), respectively. \\
(i) The network ${\mathcal{N}}_1 = E_6 \& E_4$ in Table~\ref{tab:C3L12.tex} has the annotated lattice given in Figure~\ref{lat:C2L1v}. 
The Jacobian matrix at the origin $J_f^{{\mathcal{N}}_1}$ has the eigenvalue $\upsilon=f_0+f_1+f_2$ with algebraic multiplicity $2$ and the simple eigenvalue $\mu=f_0$. 
So there are two steady-state bifurcation conditions $\upsilon=0$ and $\mu=0$. 
For the first condition, $\upsilon=0$, we consider $f\in\mathcal{V}_{\upsilon}({\mathcal{N}}_1)$.
The synchrony subspace $\Delta_0$ is $\upsilon$-simple and $\upsilon$-maximal, the two-dimensional synchrony subspace $\Delta_1$ is $\upsilon$-simple and $\upsilon$-submaximal and $\R^3$ is valency synchrony-breaking. 
It follows from Theorem~\ref{thm:LSred1} and Proposition~\ref{prop:valsynbre} that there are bifurcation branches of $f^{{\mathcal{N}}_1}$ with the synchrony $\Delta_0$ and $\R^3$.
Since $\Delta_1$ is $\upsilon$-simple and $\upsilon$-submaximal, we know that the bifurcation problem in $\Delta_1$ is reduced to a bifurcation problem in $\Delta_0$. 
Thus there is no bifurcation branches of $f^{{\mathcal{N}}_1}$ with synchrony exactly equal to $\Delta_1$.
Figure~\ref{lat:bdC2L1v} displays the two synchrony subspaces that support a bifurcation branch at $\upsilon=0$.
For the second condition, $\mu=0$, we consider $f\in\mathcal{V}_{\mu}({\mathcal{N}}_1)$.
As $\Delta_0$ does not have the eigenvalue $\mu$, the synchrony space $\Delta_1$ is $\mu$-simple and $\mu$-maximal and $\R^3$ is $\mu$-simple and $\mu$-submaximal.
Using Theorem~\ref{thm:LSred1}, we see that there is a bifurcation branch of $f^{{\mathcal{N}}_1}$ with the synchrony $\Delta_1$.
Moreover, we also know that there is no bifurcation branch of $f^{{\mathcal{N}}_1}$ with synchrony exactly equal to $\Delta_0$ or $\R^3$.
Figure~\ref{lat:bdC2L1v} displays that $\Delta_1$ supports a bifurcation branch at $\mu=0$ and 
the bifurcation diagram for ${\mathcal{N}}_1$ is given by Figure~\ref{lat:bdC2L1v}.\\
(ii) Consider the network ${\mathcal{N}}_2 =C_1 \& C_2$ in Table~\ref{tab:C2L3.tex}. The lattice of ${\mathcal{N}}_2$ is given in Figure~\ref{lat:C2L3s} and  $J_f^{{\mathcal{N}}_2}$ has the eigenvalue $\upsilon=f_0+f_1+f_2$ with algebraic multiplicity $1$ and the eigenvalue $\mu=f_0$ with algebraic and geometric multiplicity $2$.
Thus there are two steady-state bifurcation conditions $\upsilon=0$ and $\mu=0$, and there are three two-dimensional synchrony subspaces $\Delta_1$, $\Delta_2$ and $\Delta_3$. For the first condition, $\upsilon=0$, we consider $f\in\mathcal{V}_{\upsilon}({\mathcal{N}}_2)$.
The synchrony subspace $\Delta_0$ is $\upsilon$-simple and $\upsilon$-maximal, and $\Delta_1$, $\Delta_2$, $\Delta_3$ and $\R^3$ are $\upsilon$-simple and $\upsilon$-submaximal. Thus there is a bifurcation branch of $f^{{\mathcal{N}}_2}$ with the synchrony $\Delta_0$ and no bifurcation branches of $f^{{\mathcal{N}}_2}$ with synchrony exactly equal to $\Delta_1$, $\Delta_2$, $\Delta_3$ or $\R^3$. Hence the bifurcation diagram at $\upsilon=0$ is given in Figure~\ref{lat:bdC2L3s}.
For the second condition, $\mu=0$, we consider $f\in\mathcal{V}_{\mu}({\mathcal{N}}_2)$.
There is no bifurcation branch of $f^{{\mathcal{N}}_2}$ with synchrony exactly equal to $\Delta_0$, since $\mu$ is not an eigenvalue in $\Delta_0$.
The synchrony subspaces $\Delta_1$, $\Delta_2$ and $\Delta_3$ are $\mu$-simple and $\mu$-maximal and we apply Theorem~\ref{thm:LSred1} to each two-dimensional synchrony subspace. So, there are bifurcation branches of $f^{{\mathcal{N}}_2}$ with the synchronies $\Delta_1$, $\Delta_2$ and $\Delta_3$.
The network phase space $\R^3$ is $\mu$-semisimple with multiplicity $2$ and $\mu$-submaximal with order $3$.
Using Theorem~\ref{thm:LSred2}, we can conclude that $\R^3$ does not support a bifurcation branch. 
We need to check that the conditions in Theorem~\ref{thm:LSred2} are satisfied.  
The $\mu$-maximal synchrony subspaces $\Delta_1$, $\Delta_2$ and $\Delta_3$ are $2$-determined (see Remark~\ref{rem:detbif}). 
Next, we obtain explicit non-degeneracy conditions on the function $f$ such that it satisfies Assumptions~\ref{asps:LSred2}.
Using the notation used in the proof of Theorem~\ref{thm:LSred2}, set 
$$v_1=(0,0,1),\quad \quad v_2=(-f_2, f_1, f_1),\quad \quad \quad v^*_1=(0,-1,1),\quad \quad v^*_2=\frac{1}{f_1+f_2}(-1, 1, 0).$$
Following the proof of Theorem~\ref{thm:LSred2}, the reduced function $g$ of $f^{{\mathcal{N}}_2}$ given by the Lyapunov-Schmidt Reduction has the following second-order Taylor expansion: 
$$h(x_1, x_2, \lambda) =\left(\begin{array}{c} 
												h_1(x_1, x_2, \lambda)\\
												 h_2(x_1, x_2, \lambda)\end{array}\right)=\left(\begin{array}{c}
												f_{0\lambda}\lambda x_1 + \frac{f_{00}}{2}x_1^2+ (f_{00}f_1-f_{01}f_2+f_{02}f_1)x_1x_2\\
												f_{0\lambda}\lambda  x_2 +  	\frac{f_{00}(f_1-f_2)-2f_{01}f_2+2f_{02} f_1}{2}x_2^2 
												\end{array}\right)\, .$$
Consider the following non-degeneracy conditions: 
$$f_{0\lambda}\neq 0,\quad\quad f_{00}\neq 0, \quad\quad f_1+f_2\neq 0, \quad\quad f_1-f_2\neq 0,$$
$$f_{00}(f_1-f_2)-2f_{01}f_2+2f_{02} f_1 \neq 0\, .$$
Under these conditions, Assumptions~\ref{asps:LSred2} hold and there is no bifurcation branch of $f^{{\mathcal{N}}_2}$ with synchrony exactly equal to $\R^3$. 
Thus Figure~\ref{lat:bdC2L3s} is the bifurcation diagram of ${\mathcal{N}}_2$. \\
%%%
(iii) Consider the network ${\mathcal{N}}_3 = C_1 \& D_1$ in Table~\ref{tab:C3L12.tex} with lattice given in Figure~\ref{lat:C2L1d}.
 We know that $\upsilon=f_0+f_1+f_2$ is an eigenvalue of $J_f^{{\mathcal{N}}_3}$ with algebraic multiplicity $1$ and that $\mu=f_0$ is an eigenvalue of $J_f^{{\mathcal{N}}_3}$ with algebraic multiplicity $2$ and geometric multiplicity $1$ for $f$ generic. 
As the previous case, we know that there is a bifurcation branch of $f^{{\mathcal{N}}_2}$ with synchrony $\Delta_0$ when $f\in\mathcal{V}_{\upsilon}({\mathcal{N}}_3)$ and no bifurcation branches exactly with synchrony $\Delta_1$ or $\R^3$.
For the bifurcation condition $\mu=0$, we consider $f\in \mathcal{V}_{\mu}({\mathcal{N}}_3)$.
There is no bifurcation branch of $f^{{\mathcal{N}}_3}$ with synchrony exactly equal to $\Delta_0$ and there is a bifurcation branch of $f^{{\mathcal{N}}_3}$ with synchrony $\Delta_1$.
The network phase space $\R^3$ is $\mu$-defective with multiplicity $(1,2)$ and $\mu$-submaximal with order $1$ and we will use Theorem~\ref{thm:LSred3} to prove that there is a bifurcation branch of $f^{{\mathcal{N}}_3}$ with synchrony $\R^3$.
In order to apply Theorem~\ref{thm:LSred3}, we need to check if condition (\ref{eq:2detdef}) holds.
Following the proof of Theorem~\ref{thm:LSred3}, we set 
$$v_1=(0,0,1),\quad \quad v_2=(0, 1/f_2, 0),\quad \quad v^*_1=(-1/2,-1/2,1),\quad \quad v^*_2=(-f_2,f_2,0).$$
The network ${\mathcal{N}}_3$ satisfies condition (\ref{eq:2detdef}) for $p=q=0$, if the following non-degeneracy condition holds
$$<v^*_2,[v_2-(PJ_f^{{\mathcal{N}}_3})^{(-1)} Pv_1]*[v_2-(PJ_f^{{\mathcal{N}}_3})^{(-1)} Pv_1]>=\frac{3f_1^2+4f_1f_2+f_2^2}{3f_2(f_1+f_2)^2}\neq 0,$$
where $P$ is the projection into $\mbox{Im} (J_f^{{\mathcal{N}}_3})^2= \Delta_0$. 
Therefore, by Theorem~\ref{thm:LSred3}, there is a bifurcation branch with synchrony $\R^3$ for every generic $f\in\mathcal{V}_{\upsilon}({\mathcal{N}})$.
The bifurcation diagram has so  two branches of steady-state solutions with synchrony $\Delta_1$ and $\R^3$ at $\mu=0$ and the bifurcation diagram of ${\mathcal{N}}_3$ is given in Figure~\ref{lat:bdC2L1d}.
\\
(iv) Consider the network ${\mathcal{N}}_4 = C_1 \& E_6$ in Table~\ref{tab:C3L2.tex} with lattice given in Figure~\ref{lat:C3L2}.
The Jacobian matrix $J_f^{{\mathcal{N}}_4}$ has the following three simple eigenvalues: $\upsilon=f_0+f_1+f_2$, $\mu_1=f_0+f_2$ and $\mu_2=f_0$.
The full-synchrony subspace $\Delta_0$ is $\upsilon$-simple and $\upsilon$-maximal.
Note also that one of the two-dimensional synchrony spaces $\Delta_1$ is $\mu_1$-simple and $\mu_1$-maximal and the other $\Delta_2$ is $\mu_2$-simple and $\mu_2$-maximal.
As before, there is bifurcation branch with synchrony $\Delta_0$, $\Delta_1$ or $\Delta_2$ for bifurcation problems given by the condition $\upsilon=0$, $\mu_1=0$ or $\mu_2=0$, respectively. 
In this case, $\R^3$ is $\mu$-simple and $\mu$-submaximal, for any network eigenvalue, $\upsilon$, $\mu_1$ or $\mu_2$.
Independently of the bifurcation condition, $\R^3$ does not support a bifurcation branch.
Then the bifurcation diagram has one branch emerging at each bifurcation condition $\upsilon=0$, $\mu_1=0$ and $\mu_2=0$ with synchrony $\Delta_0$, $\Delta_1$ or $\Delta_2$ leading to the bifurcation diagram given in Figure~\ref{lat:bdC3L2}.
\\
%%%
(v) Consider the network ${\mathcal{N}}_5 = C_1 \& B_1$ in Table~\ref{tab:C3L1.tex} with the lattice given in Figure~\ref{lat:C3L1}.
The Jacobian matrix $J_f^{{\mathcal{N}}_5}$ has the following three simple eigenvalues:  $\upsilon=f_0+f_1+f_2$, $\mu_1=f_0+f_2$ and $\mu_2=f_0-f_2$. 
Thus we need to consider three steady-state bifurcation conditions: $\upsilon=0$, $\mu_1=0$ and $\mu_2=0$.
Note that the full-synchrony subspace $\Delta_0$ is $\upsilon$-simple and $\upsilon$-maximal, the two-dimensional synchrony subspace $\Delta_1$ is $\mu_1$-simple and $\mu_1$-maximal. 
The space $\R^3$ is $\mu_2$-simple and $\mu_2$-maximal.
The study for each bifurcation condition is similar and we can apply Theorem~\ref{thm:LSred1}.
Then, we have a bifurcation branch inside each synchrony space for bifurcation problems with the associated bifurcation conditions 
and the bifurcation diagram is given in Figure~\ref{lat:bdC3L1}.
\\
%%%
(vi)
Consider the network ${\mathcal{N}}_6 = C_1 \& A_2$ in Table~\ref{tab:C3L0RI1.tex} with the lattice given in Figure~\ref{lat:C3L0}.
The Jacobian matrix $J_f^{{\mathcal{N}}_6}$ has the following three simple eigenvalues:  $\upsilon=f_0+f_1+f_2$, $\mu_1=f_0-\frac{f_2}{2}+i\frac{f_2\sqrt{3}}{2}$ and $\mu_2=f_0-\frac{f_2}{2}-i\frac{f_2\sqrt{3}}{2}$. 
Since the eigenvalues are $\mu_1$ and $\mu_2$ are conjugate complex numbers, they do not lead to a steady-state bifurcation condition. 
We need to consider only the  bifurcation condition $\upsilon=0$.
In this case, we take $f\in \mathcal{V}_{\upsilon}({\mathcal{N}}_4)$.
The full-synchrony subspace $\Delta_0$ is $\upsilon$-simple and $\upsilon$-maximal and $\R^3$ is $\upsilon$-simple and $\upsilon$-submaximal.
Thus there is exactly one bifurcation branch with synchrony $\Delta_0$ and the bifurcation diagram is given in Figure~\ref{lat:bdC3L0a}.
\\
%%%
(vii) 
Consider the network ${\mathcal{N}}_7 =E_6 \& B_1$ in Table~\ref{tab:C3L0RI1.tex} with lattice given in Figure~\ref{lat:C3L0}.
The Jacobian matrix $J_f^{{\mathcal{N}}_7}$ has the following three simple eigenvalues:  $\upsilon=f_0+f_1+f_2$, $\mu_1=f_0 + \frac{f_1}{2} + \frac{\sqrt{f_1^2 + 4f_2^2}}{2}$ and $\mu_2=f_0+\frac{f_1}{2}-\frac{\sqrt{f_1^2 + 4f_2^2}}{2}$.
Note that $f_1^2 + 4f_2^2\geq 0$, then there are three steady-state bifurcation conditions: $\upsilon=0$, $\mu_1=0$ and $\mu_2=0$.
As before, there exists a bifurcation branch with synchrony $\Delta_0$ for any generic $f\in\mathcal{V}_{\upsilon}({\mathcal{N}}_5)$. The bifurcation diagram for the bifurcation 
condition  $\upsilon=0$ has a branch with synchrony $\Delta_0$.
The total phase space $\R^3$ is $\mu_1$-simple and $\mu_1$-maximal.
Thus, there exists a bifurcation branch of steady-state solutions with synchrony $\R^3$ for any generic $f\in\mathcal{V}_{\mu_1}({\mathcal{N}}_5)$ and the bifurcation diagram for the bifurcation condition  $\mu_1=0$ has a branch with synchrony $\R^3$.
The total phase space $\R^3$ is also $\mu_2$-simple and $\mu_2$-maximal.
So the bifurcation diagram at $\mu_2=0$ is identical to the previous case and the bifurcation diagram at $\mu_2=0$ has a branch with synchrony $\R^3$.
Therefore, the bifurcation diagram for ${\mathcal{N}}_7$ is given in Figure~\ref{lat:bdC3L0b}. 
\\
%%%
(viii) 
 Consider the network ${\mathcal{N}}_8 = D_1 \& D_2$ in Table~\ref{tab:C3L0RI2.tex} with lattice given in Figure~\ref{lat:C3L0}.
The Jacobian matrix $J_f^{{\mathcal{N}}_8}$ has the following eigenvalues: $\upsilon=f_0+f_1+f_2$, $\mu_1=f_0+\sqrt{f_1f_2}$ and $\mu_2=f_0-\sqrt{f_1f_2}$. 
Note that the eigenvalues $\mu_1$ and $\mu_2$ are real or conjugate complex numbers if $f_1f_2\geq 0$ or $f_1f_2<0$.
Moreover, the space of functions can be divided into two disjoint regions of functions depending if there is one or there are three steady-state bifurcation conditions. 
In the region given by $f_1f_2<0$, the analysis is similar to the case (vi) studied above.
Therefore, in this region the bifurcation diagram for ${\mathcal{N}}_8$ is given in Figure~\ref{lat:bdC3L0a}.
The second region is given by $f_1f_2>0$ and it is identical to the previous case (vii).
Here, the bifurcation diagram is given in Figure~\ref{lat:bdC3L0b}.
Thus the network ${\mathcal{N}}_8$ has two different bifurcation diagrams: Figure~\ref{lat:bdC3L0a} or Figure~\ref{lat:bdC3L0b}.
\end{proof}

\begin{rem}\label{rem:detbif}
(i) The networks $C_1\&D_1$, $C_1 \& D_4$ and $C_1 \& D_6$ and $D_1 \& D_4$ in Table~\ref{tab:C3L12.tex} satisfy the conditions of Theorem~\ref{thm:LSred3} for one of their eigenvalues. The steady-state bifurcation branches of the first three of those networks have been studied in \cite{NRS16} and they correspond to the networks $A$, $B$ and $C$, respectively, in that work.\\
(ii) It follows from \cite[Theorem 6.7]{SG11} and the considerations about determinacy given in the proof of Theorem~\ref{thm:LSred1} that the bifurcation problems considered in Theorem~\ref{thm:finalbif} are at most $3$ determined when the bifurcation condition is $\mu$-simple. 
This means that $g_{x^{2}}\neq 0$ or $g_{x^{3}}\neq 0$, where $g$ is the reduced function obtained by the Lyapunov-Schmidt Reduction. 
By explicit computation for the networks considered here, we have that $g_{x^{2}}=0$ if and only if the network and the bifurcation condition is one of the following: 
$E_6\& F_5$ when $f_0+f_1-f_2=0$, $C_1\& B_1$ when $f_0-f_2=0$, $E_6\& F_6$ when $f_0+f_1-f_2=0$, $E_6\& F_4$ when $f_0+f_1-f_2=0$, $B_1\& F_1$ when $f_0+f_1-f_2=0$, $F_1\& F_2$ when $f_0-f_1-f_2=0$, $F_1\& F_6$ when $f_0-f_1-f_2=0$, and $F$ when $f_0-f_1=0$.  

The condition $g_{x^{2}}=0$ is usually associated with a $\mathbb{Z}_2$-symmetry of the bifurcation problem. However, the authors of \cite{SG11} noted that this is not always the case for coupled cell systems of regular networks. That occurs, in particular, for  the networks $E_6\& F_6$ and $F_1\& F_6$, that we study here, where $g_{x^{2}} =0$, without the networks or any of their quotient networks having $\mathbb{Z}_2$-symmetry.
\hfill $\Diamond$
\end{rem}

\section{Conclusions}

This work contributes to the classification of the codimension-one steady-state synchrony-breaking bifurcations for networks with identical cells and asymmetric inputs.
In order to achieve this goal, we start by deriving general results regarding the codimension-one steady-state bifurcation problems from a full synchrony equilibrium covering connected networks 
with any number of identical cells and any number of asymmetric inputs.
The results take into account how the network synchrony spaces intersect the eigenspaces of the Jacobian matrix at a full synchrony equilibrium.
In particular, they are organized by the type of the eigenvalues, i.e.,  simple, semisimple or defective, and maximal or submaximal. 

These bifurcation results are then applied to the class of networks of three-cells with any number of asymmetric inputs, after we have obtained a classification of their eigenvalues and lattices of synchrony subspaces.
A direct application is the steady-state bifurcation analysis for the three-cell quotient networks of a given network with $n >3$ cells.
We then restrict to three-cell networks with one, two or six asymmetric inputs and, for each such network, we are able to identify the synchrony subspaces that support a synchrony-breaking bifurcation branch of steady-state solutions. 
We believe that the classification obtained here also holds for the three-cell networks with three, four and five asymmetric inputs, as the eigenvalues of the Jacobian at the full equilibrium tend to be simple as we increase the number of inputs.

\vspace{5mm}

\noindent {\bf Acknowledgments} \\
MA and AD were partially supported by CMUP, member of LASI, which is financed by national funds through FCT -- Funda\c c\~ao para a Ci\^encia e a Tecnologia, I.P., under the projects with reference UIDB/00144/2020 and UIDP/00144/2020. PS was supported by Grant BEETHOVEN2 of the National Science Centre, Poland, no. 2016/23/G/ST1/04081.

\end{document}